\numberwithin{equation}{section}
\def\R{\mathbb{R}}
\def\N{\mathbb{N}}
\def\@currentlabel{2.1}\label{e:dispaa}
\def\@currentlabel{2.21}\label{e:dispau}
\def\@currentlabel{2.22}\label{e:dispav}
\def\@currentlabel{2.23}\label{e:dispaw}
\def\@currentlabel{2.24}\label{e:dispax}
\def\theequation{\thesection.\@arabic\c@equation}
\newcommand{\D}{\displaystyle}
\newcommand{\inn}{{\quad\hbox{in } }}
\newcommand{\onn}{{\quad\hbox{on } }}
\newcommand{\ttt}{\tilde }
\newcommand{\pp}{ {\partial} }
\newcommand{\cuad}{{\sqcap\kern-.68em\sqcup}}
\newcommand{\DD}{{\mathcal D}}
\newcommand{\be}{\begin{equation}}
\newcommand{\ee}{\end{equation}}
\newcommand{\la}{\lambda}
\newcommand{\eq}[1]{\begin{equation}#1\end{equation}}
\newcommand{\equ}[1]{\begin{equation*}#1\end{equation*}}
\newcommand{\ddn}[1]{\frac{\partial #1}{\partial \nu}}
\renewcommand{\theequation}{\thesection.\arabic{equation}}
 \newtheorem{lema}{Lemma}[section]
 \newtheorem{lemma}{Lemma}[section]
\newtheorem{teo}{Theorem}
\newtheorem{prop}{Proposition}[section]
\newtheorem{corollary}{Corollary}[section]
\newtheorem{remark}{Remark}[section]
\newcommand{\bremark}{\begin{remark} \em}
\newcommand{\eremark}{\end{remark} }
\begin{document}

\title[Interior bubbling solutions for the critical Lin-Ni-Takagi problem]{Interior bubbling solutions for the critical Lin-Ni-Takagi problem in dimension 3}

\author[M. del Pino]{Manuel del Pino}
\address{\noindent   Departamento de
Ingenier\'{\i}a  Matem\'atica and Centro de Modelamiento
 Matem\'atico (UMI 2807 CNRS), Universidad de Chile, Santiago,
Chile.}
\email{delpino@dim.uchile.cl}

\author[M. Musso]{Monica Musso}
\address{\noindent   Departmento de Matem\'aticas, Pontificia Universidad Cat\'olica de Chile, Santiago, Chile.}
\email{mmusso@mat.puc.cl }

\author[C. Rom\'an]{Carlos Rom\'an}
\address{\noindent   Sorbonne Universit\'es, UPMC Univ Paris 06, CNRS, UMR 7598, Laboratoire Jacques-Louis Lions, 4, place Jussieu 75005, Paris, France.}
\email{roman@ann.jussieu.fr }

\author[J. Wei]{Juncheng Wei}
\address{\noindent  Department of Mathematics, University of British Columbia, Vancouver, BC V6T1Z2, Canada}
\email{jcwei@math.ubc.ca}


\begin{abstract} We consider the problem of finding positive solutions of the problem $\Delta u - \lambda u +u^5 = 0$ in a bounded, smooth domain $\Omega$ in $\R^3$, under zero Neumann boundary conditions. Here  $\lambda$ is a positive number. We analyze the role of Green's function of $-\Delta +\lambda$ in the presence of solutions exhibiting single bubbling behavior at one point of the domain when $\lambda$ is regarded as a parameter. As a special case of our results, we find and characterize a positive value $\la_*$  such  that if $\lambda-\lambda^*>0$ is sufficiently small, then this problem is solvable by a solution $u_\la$ which blows-up by {\em bubbling} at a certain interior point of $\Omega$ as $\la \downarrow \la_*$.
\end{abstract}
\maketitle

\section{Introduction}

Let $\Omega$ be a bounded, smooth domain in $\R^n$.
This paper deals with the boundary value problem
\be
\Delta u - \la u + u^p = 0 \inn \Omega, \nonumber
\ee
\be
u>0  \inn   \Omega, \label{1}
\ee
\be
\frac {\pp u}{\pp \nu} = 0 \onn \pp\Omega.
\nonumber \ee
where $p>1$.  A large literature has been devoted to this problem when $1\le p\le \frac{n+2}{n-2}$ for asymptotic values of the parameter $\la$.
A very interesting feature of this problem is the presence of families of solutions $u_\la$ with {\em point concentration phenomena}. This means solutions that exhibit
peaks of concentration around one or more points of $\Omega$ or $\pp\Omega$, while being very small elsewhere.
 For $1<p<\frac{n+2}{n-2}$, solutions with this feature around
 points of the boundary where first discovered by
Lin, Ni and Takagi in \cite{LNT} as $\la \to + \infty$.
It is found in \cite{LNT,NT1,NT2} that a mountain pass, or least energy positive solution $u_\la $ of Problem \eqref{1} for  $\la\to +\infty $ must look like
$$u_\la (x) \sim \la^{\frac 1{p-1}} V(\la^{\frac 12 }( x- x_\la)) $$
where $V$ is the unique positive radial solution to
\be \label{3}
\Delta V - V + V^p = 0, \quad \lim_{|y|\to \infty} V(y) = 0   \inn \R^N
\ee
and $x_\la \in \pp \Omega$ approaches a point of maximum mean curvature of $\pp\Omega$. See \cite{df1} for a short proof of this fact. Higher energy solutions with this asymptotic profile near one or several points of the boundary or the interior of $\Omega$ have been constructed and analyzed in many works, see for instance  \cite{DY,DFW,GPW,GW,LNW} and their references. In particular, solutions with any given number of interior and boundary concentration points are known to exist as $\la \to +\infty$.

\medskip
The case of the critical exponent $p=\frac{n+2}{n-2}$ is in fact quite different. In particular, no positive solutions of \eqref{3} exist. In that situation solutions $u_\la$ of \eqref{1} do exist
for sufficiently large values of $\la$ with concentration now in the form
\be u_\la  (x) \sim   \mu_\la ^{-\frac {n-2} 2 }  U \left (\mu_\la^{-1 }( x- x_\la)\right ) \label{5}\ee
where $\mu_\la = o( \la^{-\frac 12} ) \to 0 $ as $\la \to + \infty$. Here
$$ U(x) =  \alpha_n \left ( \frac  1 { 1+ |y|^2} \right )^{\frac {n-2}2},  \quad \alpha_n = (n(n-2))^{\frac {n-2} 4} , $$
 is the {\em standard bubble}, which up to scalings and translations, is the unique positive solution of the Yamabe equation
 \be \label{6}
\Delta U +  U^{\frac{n+2}{n-2}} = 0 \inn \R^N.
\ee
Solutions with boundary bubbling have been built and their dimension-dependent bubbling rates $\mu_\la$ analyzed in various works, see
\cite{AM,APY,DRW,GG,GL,NPT,WWY1,ZQW,WY} and references therein. Boundary bubbling by small perturbations of the exponent $p$ above and below the critical exponent has been found in \cite{DMP}.

\medskip
Unlike the subcritical range, for $p= \frac{n+2}{n-2}$  solutions  with interior bubbling points as $\la \to +\infty$ are harder to be found. They do not exist for $n=3$ or $n\ge 7$,  \cite{E,R1,R2}, and in all dimensions interior bubbling can only coexist with boundary bubbling \cite{R2}.  To be noticed is that the constant function $ \underline u_\la :=  \la^{\frac 1{p-1}}$ represents a trivial solution to Problem \eqref{1}. A compactness argument yields that this constant is the unique solution of \eqref{1} for $1<p<\frac{n+2}{n-2}$  for all sufficiently small $\la$, see \cite{LNT}.
The  { \em Lin-Ni conjecture}, raised in \cite{LN}  is that this is also the case for $p=\frac{n+2}{n-2}$. The issue turns out to be quite subtle. In \cite{AY1,AY2} it is found that
radial nontrivial solutions for all small $\la>0$ exist when $\Omega$ is a ball in dimensions $n=4,5,6$, while no radial solutions are present for small $\la$  for $n=3$ or $n\ge 7$.   For a general convex domain, the Lin-Ni conjecture is true in dimension $n=3$ \cite{WX,Z}. See \cite{DRW} for the extension to the mean convex case and related references.
In \cite{RW} solutions with multiple  interior bubbling points  when $\la\to 0^+$ were found when $n=5$, in particular showing that Lin-Ni's conjecture fails in arbitrary domains in this dimension. This result is the only example present in the literature of its type. The authors conjecture that a similar result should be present for $n=4,6$.

\medskip
In the case $n=3$ interior bubbling is not possible if $\la \to +\infty$ or if $\la \to 0^+$, for instance in a convex domain. In this paper we will show a new phenomenon, which  is
the presence of a solution $u_\la$ with {\em interior bubbling}  for values of $\la$ near a  number $0< \la_*(\Omega) < +\infty$ which can be explicitly characterized.
Thus, in what remains of this paper we consider the critical problem

\be
\Delta u - \la u + u^5 = 0 \inn \Omega, \nonumber
\ee
\be
u>0  \inn   \Omega, \label{7}
\ee
\be
\frac {\pp u}{\pp \nu} = 0 \onn \pp\Omega.
\nonumber \ee
 where $\Omega\subset \R^3$ is smooth and bounded.  We will prove the following result.

\begin{teo}\label{teo1}
There exists a number $0<\la_*<+\infty $ such that for all $\la > \la^*$ with  $\la -\la_*$ sufficiently small, a nontrivial solution $u_\la$ of Problem \eqref{7} exists, with an
asymptotic profile as $\la \to \la_*^+$ of the form
$$
u_\la (x) =  3^{\frac 14} \left ( \frac  {\mu_\la}  { \mu_\la^2 + |x-x_\la |^2} \right )^{\frac {1}2} + O( \mu_\la^{\frac 12} ) \inn \Omega,
$$
where $\mu_\la  = O (\la - \la_*) $ and the point $x_\la\in \Omega$ stays uniformly away from $\pp \Omega$.

\end{teo}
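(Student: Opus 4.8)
The plan is to construct $u_\la$ by the standard Lyapunov–Schmidt finite-dimensional reduction, taking as approximate solution a single truncated bubble centered at an interior point. Concretely, write $u = W_{\mu,\xi} + \phi$, where
\equ{
W_{\mu,\xi}(x) = 3^{1/4}\left(\frac{\mu}{\mu^2 + |x-\xi|^2}\right)^{1/2} + \mu^{1/2}\, H_{\mu,\xi}(x),
}
with $H_{\mu,\xi}$ a correction chosen so that $W_{\mu,\xi}$ nearly satisfies the Neumann boundary condition; here $H$ is governed by the Green's function of $-\Delta + \la$, which is why that Green's function (and its regular part, the Robin function) enters the analysis. First I would compute the error $S(W_{\mu,\xi}) = \Delta W_{\mu,\xi} - \la W_{\mu,\xi} + W_{\mu,\xi}^5$ in suitable weighted $L^\infty$ or $L^{6/5}$ norms, showing it is of size $O(\mu^{1/2})$ once the natural scaling relation between $\mu$ and the parameters is imposed. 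The bubble solves the Yamabe equation exactly, so the error comes from the three genuinely lower-order terms: the linear term $-\la W$, the boundary correction, and the interaction between the bubble and $H$.

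The second step is the linear theory: invert the linearized operator $L_{\mu,\xi}[\phi] = \Delta\phi - \la\phi + 5 W_{\mu,\xi}^4 \phi$ orthogonally to the $4$-dimensional kernel generated by $\partial_\mu W$ and $\partial_{\xi_i} W$ (in dimension $3$ there are $3$ translation modes plus the dilation mode). The key point here is a uniform a priori estimate $\|\phi\| \le C\|L_{\mu,\xi}[\phi]\|$ on the orthogonal complement, proved by a blow-up/compactness contradiction argument that reduces the problem to the nondegeneracy of the standard bubble for \eqref{6}. Given this, a fixed-point argument solves the projected problem $S(W + \phi) = \sum c_i Z_i$ and produces $\phi = \phi_{\mu,\xi}$ with $\|\phi_{\mu,\xi}\| = O(\mu^{1/2})$, depending smoothly on $(\mu,\xi)$.

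The third and decisive step is the reduced variational problem: one shows that the full equation \eqref{7} is solved precisely when $(\mu,\xi)$ is a critical point of the reduced energy $J_\la(\mu,\xi) := E_\la(W_{\mu,\xi} + \phi_{\mu,\xi})$, where $E_\la$ is the energy functional of \eqref{7}. Expanding, I expect
\equ{
J_\la(\mu,\xi) = a_0 + \mu\big(a_1\, g_\la(\xi) - a_2\, \la^{?}\big) + o(\mu),
}
where $g_\la(\xi)$ is the Robin function (regular part of Green's function of $-\Delta+\la$ at $\xi$) and the constants $a_i$ are positive. The number $\la_*$ is then characterized as the value of $\la$ for which $\min_\xi g_\la(\xi)$ crosses a threshold determined by balancing the coefficient of $\mu$; for $\la$ slightly above $\la_*$ the $\mu$-coefficient, evaluated at the minimizing $\xi$, changes sign in a way that creates a genuine critical point with $\mu_\la = O(\la - \la_*) \to 0$, while $\xi_\la$ stays near an interior minimum point of $g_{\la_*}$ and hence away from $\partial\Omega$.

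**Main obstacle.** The delicate part is not the reduction machinery, which is by now routine, but the sharp asymptotic expansion of $J_\la$ together with the verification that $\la_*$ is well-defined and strictly positive — i.e., that the geometry of the Robin function $g_\la$ as a function of the \emph{parameter} $\la$ actually produces the required sign change at a finite positive value, and that the resulting critical point of the reduced functional is stable enough (e.g., corresponds to a nondegenerate or topologically robust min-max level) to survive the $o(\mu)$ error terms. Controlling the $\la$-dependence of the Green's function and its regular part uniformly near $\la_*$, and pinning down the exact power of $\mu$ and the exact constants $a_i$ so that the balancing is legitimate, is where the real work lies.
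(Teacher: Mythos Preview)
Your overall architecture --- Lyapunov--Schmidt reduction with a bubble-plus-correction ansatz, linear theory via nondegeneracy of the standard bubble, and a reduced variational problem in $(\mu,\xi)$ --- is exactly what the paper does. But the decisive step, the form of the reduced energy and the characterization of $\la_*$, is guessed incorrectly in a way that would make the argument collapse.

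The expansion is not of the form $J_\la(\mu,\xi) = a_0 + \mu\big(a_1 g_\la(\xi) - a_2 \la^{?}\big) + o(\mu)$ with two competing terms at order $\mu$. In dimension $3$ the actual expansion is
\[
E_\la(U_{\mu,\xi}) = a_0 + a_1\, \mu\, g_\la(\xi) \, -\, a_2\, \la\, \mu^2 \, -\, a_3\, \mu^2 g_\la(\xi)^2 + o(\mu^2),
\]
with $a_i>0$; the balance is between the linear term $a_1\mu g_\la(\xi)$ and the quadratic term $-a_2\la\mu^2$, yielding an interior maximum in $\mu$ at $\mu_\la \sim \dfrac{a_1}{2a_2}\dfrac{g_\la(\xi)}{\la}$. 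For this to be positive one needs $g_\la(\xi)>0$ at some interior point, so the relevant quantity is $\sup_\Omega g_\la$, not $\min$, and the relevant limiting point is an interior \emph{maximum} of $g_{\la_*}$, not a minimum. (Indeed $g_\la(x)\to -\infty$ as $x\to\partial\Omega$, so only interior maxima matter.) The number $\la_*$ is then characterized purely in terms of $g_\la$: it is the infimum of $\la>0$ for which $\sup_\Omega g_\la \ge 0$, and one shows $\sup_\Omega g_{\la_*}=0$. The needed ingredients are that $g_\la$ is strictly increasing in $\la$, that $\sup_\Omega g_\la \to -\infty$ as $\la\to 0^+$, and that $\sup_\Omega g_\la >0$ for large $\la$; these Green-function facts are what pin down $0<\la_*<\infty$.

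With your expansion, by contrast, a linear-in-$\mu$ leading term produces no interior critical point in $\mu$ at all, so the reduction would not close. Fixing the order of the $\la$-term (it enters at $\mu^2$, coming from $\frac{\la}{2}\int w_{\mu,\xi}^2$ after subtracting the singular part) and switching ``$\min$'' to ``$\sup$'' throughout are the essential corrections; once those are in place your outline matches the paper's proof.
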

The number $\la_*$ and the asymptotic location  of the points $x_\la$ can be characterized in the following way.
For $\la >0$ we let $G_\la (x,y) $ be the Green function of the problem
$$
\Delta_x G_\la (x,y) - \la G_\la (x,y) + \delta_y(x) =0 \inn \Omega
$$
$$
\frac {\pp G_\la }{\pp \nu} (x,y) = 0 \onn \pp \Omega
$$
so that, by definition
\be \label{Glambdadef}
G_\la (x,y) = \Gamma(x,y) - H_\la (x,y)
\ee
where $\Gamma(x,y)=\frac {1} {4\pi|x-y|}$ and $H_\la$, the regular part of $G_\la$, satisfies
\be\label{robin}
\Delta_x H_\la (x,y) - \la H_\la (x,y) -  \frac {1 } {4\pi|x-y|}   =0 \inn \Omega
\ee
$$
\frac {\pp H_\la }{\pp \nu} (x,y) =  \frac {\pp}{\pp \nu} \frac {1} {4\pi|x-y|}   \onn \pp \Omega
$$
Let us consider the {\em diagonal} of the regular part (or Robin's function)
\be
g_\la (x) :=  H_\la (x,x), \quad x\in \Omega .
\label{gla}\ee
Then we have (see Lemma \ref{lemag}) $g_\la (x) \to -\infty$ as $x\to \pp \Omega$. 
The number $\la_*(\Omega) $ in Theorem \ref{teo1} is characterized as
\be
\la_*(\Omega)  := \inf \{ \la >0\ /\   \sup_{x\in \Omega}  g_\la (x) < 0 \} .
\label{lastar}\ee
In addition, we have that the points $x_\la\in\Omega  $ are such that
\be
\lim_{\la \downarrow \la_*}  g_\la (x_\la) =  \sup_{ \Omega}  g_{\la_*}   = 0.
\label{limite} \ee
As we will see in \S 2, in the ball $\Omega = B(0,1)$, the number $\la_*$ is the unique number $\la$ such that
$$
\frac{\sqrt\la-1}{\sqrt \la +1}\exp{(2\sqrt \la)}=1,
$$
so that  $\la^*\approx 1.43923$.

\bigskip
It is worthwhile to emphasize the connection between the number $\la_*$ and the so called {\em Brezis-Nirenberg number}  $\ttt \la^*(\Omega)>0 $ given as the least value $\la$ such that for all $  \ttt \la^* < \la < \la_1$ where $\la_1$ is the first Dirichlet eigenvalue of the Laplacian,   there exists a least energy  solution of the $3d$-Brezis-Nirenberg problem \cite{bn}

\be
\Delta u + \la u + u^5 = 0 \inn \Omega, \nonumber
\ee
\be
u>0  \inn  \ \  \Omega, \label{ee}
\ee
\be
u = 0  \onn \pp\Omega.
\nonumber \ee
A parallel characterization of the number $\ttt \la_*$ in terms of a Dirichlet Green's function has been established in \cite{druet} and its role in bubbling phenomena further explored in \cite{DDM}.  It is important to remark that the topological nature of the solution we find is not that of a least energy, mountain pass type solution (which is actually just the constant for small $\la$). In fact the construction formally yields that its Morse index is 4.

\medskip
 Our result can be depicted (also formally) in Figure \ref{fig} as a bifurcation diagram from the branch of constant solutions $u=\underline u_\la $. At least in the radial case, what our result suggests is that the bifurcation branch which stems from the trivial solutions at the value $\la = \la_2/4$, where $\la_2$ is the first nonzero radial eigenvalue of $-\Delta$ under zero Neumann boundary conditions in the unit ball, goes left and ends at $\la = \la_*$. In dimensions $n=4,5,6$ the branch ends at $\la =0$ while for $n\ge 7$ it blows up to the right.

\begin{figure}[h]
\label{fig}
\includegraphics[scale=0.42]{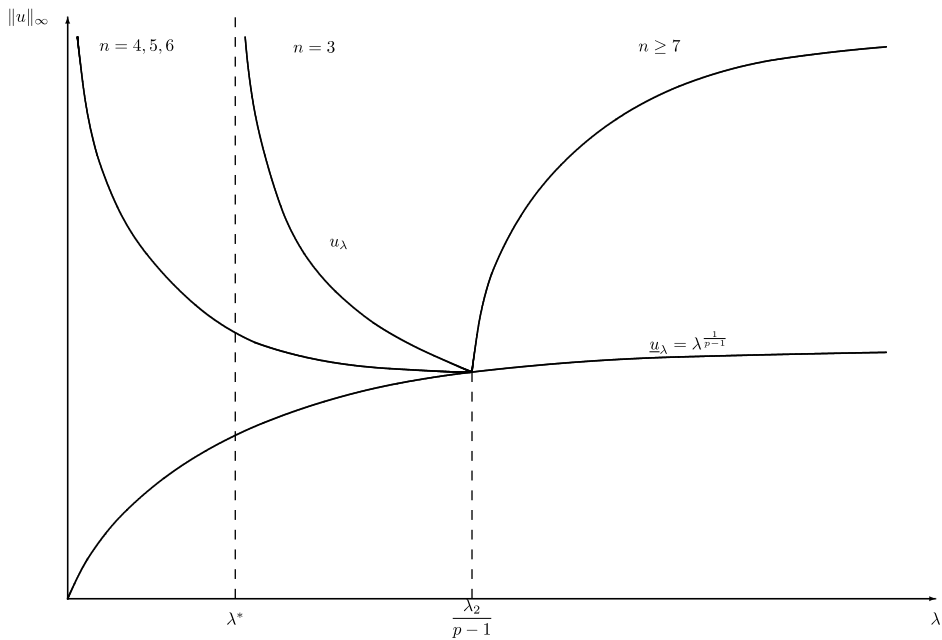}
\caption{Bifurcation diagram for solutions of Problem \ref{1},  $p=\frac{n+2}{n-2}$}
\end{figure}

\medskip

\medskip
Theorem \ref{teo1}, with the additional properties
 will be found a consequence of a more general result, Theorem \ref{teo2} below,  which concerns critical points with value zero for the function $g_{\la_0}$ at a value $\la_0 >0$.
 We state this result and find Theorem \ref{teo1} as a corollary in \S 2, as a consequence of general properties of the functions $g_\la$.  The remaining sections will be devoted to the proof
 of Theorem \ref{teo2}.

\section{ Properties of $g_\la$ and statement of the main result }

Let $g_\la(x)$ be the function defined in \eqref{gla}. Our main result states that an interior bubbling solution is present as $\la\downarrow \la_0$, whenever $g_{\la_0}$ has either a local minimum or a nondegenerate critical point with value $0$.

\begin{teo} \label{teo2} Let us assume that for a number $\lambda_0> 0$, one of the following two situations holds.

\noindent{$(a)$} There is an open subset $\DD$ of $\Omega$ such that
\be\label{pico}
0 = \sup_{\DD} g_{\lambda_0} > \sup_{\partial \DD} g_{\lambda_0}\,.
\ee
\noindent{$(b)$} There is a point $x_0\in \Omega$ such that
$g_{\lambda_0} (x_0) = 0, \quad \nabla g_{\lambda_0} (x_0) = 0$
and $D^2_xg_{\lambda_0} (x_0)$ is non-singular.

\smallskip \noindent Then for all $\lambda >\lambda_0$ sufficiently close to $\lambda_0$ there exists a solution $u_\lambda$ of Problem $(\ref{1})$ of the form
\begin{equation}u_\lambda (x) \, = \, 3^{1/4}\,\left (  \frac { \mu_\lambda}  { \mu_\la^2  \,+\, |x-  x_\lambda |^2 }  \right )^{\frac 12} + O ( \mu_\la^{\frac 12} ) ,\quad
\mu_\la =  \gamma \frac{g_\la(x_\la)}\la >0
\end{equation}
for some $\gamma >0$.
Here $x_\lambda \in \DD$ in case $(a)$ holds and  $x_\lambda \to x_0$ if $(b)$ holds.
Besides, for certain positive numbers $\alpha,\beta $ we have that
\be  \alpha (\la- \la_0) \le g_\la(x_\la)  \le \beta (\la- \la_0). \label{pico1}\ee
\end{teo}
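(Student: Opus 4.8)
The plan is to carry out a finite-dimensional Lyapunov--Schmidt reduction. For $\mu>0$ small and $\xi$ ranging in a fixed compact subset of $\Omega$ — inside $\DD$ in case $(a)$, inside a small ball about $x_0$ in case $(b)$ — I look for a solution of the form $u=W_{\mu,\xi}+\phi$, where $W_{\mu,\xi}$ is the first approximation
$$
W_{\mu,\xi}(x)= U_{\mu,\xi}(x) - \mu^{1/2}c_*\,H_\la(x,\xi) + \text{(lower-order correction)},\qquad
U_{\mu,\xi}(x)= 3^{1/4}\Big(\tfrac{\mu}{\mu^2+|x-\xi|^2}\Big)^{1/2},
$$
with $c_*>0$ a fixed constant and $H_\la$ the regular part of $G_\la$. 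The subtracted term is chosen so that $W_{\mu,\xi}$ satisfies the Neumann condition and so that its slow $O(|x-\xi|^{-1})$ tail is replaced by $\mu^{1/2}c_*\,G_\la(x,\xi)$; then $\Delta W_{\mu,\xi}-\la W_{\mu,\xi}+W_{\mu,\xi}^5$ is small in a weighted $L^\infty$ norm, its size controlled by $\mu^{1/2}$ and by $g_\la(\xi)$. Because $g_\la\to-\infty$ on $\partial\Omega$ by Lemma~\ref{lemag}, the reduction automatically keeps $\xi$, hence $x_\la$, uniformly away from $\partial\Omega$, so no boundary layer is needed. Letting $Z_0=\ptl_\mu U_{\mu,\xi}$ and $Z_j=\ptl_{\xi_j}U_{\mu,\xi}$ ($j=1,2,3$) span the approximate kernel of the linearization at the bubble, I use the classical nondegeneracy of $U$ together with a uniform a priori estimate for the projected linearized operator (in weighted $L^\infty$ or $H^1$ spaces) and a contraction mapping to solve
$$
\Delta(W_{\mu,\xi}+\phi)-\la(W_{\mu,\xi}+\phi)+(W_{\mu,\xi}+\phi)_+^5=\sum_{i=0}^{3}c_i\,U_{\mu,\xi}^4\,Z_i
$$
for a unique small $\phi=\phi_{\mu,\xi}$, with $\|\phi_{\mu,\xi}\|=o(\mu^{1/2})$ and corresponding control of its $(\mu,\xi)$-derivatives, all uniform as $\la\downarrow\la_0$. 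A maximum-principle argument (the bubble dominates near $\xi$, while $G_\la>0$ controls the exterior) gives $u=W_{\mu,\xi}+\phi_{\mu,\xi}>0$.

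Next I reduce to a finite-dimensional problem. Setting $J(\mu,\xi):=E_\la(W_{\mu,\xi}+\phi_{\mu,\xi})$ with $E_\la(u)=\tfrac12\int_\Omega(|\nabla u|^2+\la u^2)-\tfrac16\int_\Omega u_+^6$, a standard argument shows that $(\mu,\xi)$ is a critical point of $J$ if and only if all $c_i$ vanish, i.e. if and only if $u=W_{\mu,\xi}+\phi_{\mu,\xi}$ solves \eqref{1}. The core computation is the energy expansion: the Green's-function data enter through $\tfrac\la2\int_\Omega W_{\mu,\xi}^2$ and through $\int_\Omega U_{\mu,\xi}^5H_\la(\cdot,\xi)\sim\big(\int_{\RRN}U^5\big)\,g_\la(\xi)\,\mu^{1/2}$, yielding
$$
J(\mu,\xi)=C_0+c_1\,\mu\,g_\la(\xi)-c_2\,\la\,\mu^2+o\!\big(\mu^2+\mu|g_\la(\xi)|+(\la-\la_0)^2\big),\qquad c_1,c_2>0 .
$$
By the properties of $g_\la$ recorded in this section, $g_\la$ is strictly increasing in $\la$ with $\sup_{\DD}g_{\la_0}=0$ in case $(a)$, resp. $g_{\la_0}(x_0)=0$ in case $(b)$; hence for $\la>\la_0$ close to $\la_0$ the function $g_\la$ is positive on part of the relevant region and of size $O(\la-\la_0)$ there.

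Now I locate the critical point. The equation $\ptl_\mu J=0$ forces, for $\xi$ with $g_\la(\xi)>0$, a unique $\mu=\mu_*(\xi)=\gamma\,g_\la(\xi)/\la$ with $\gamma=c_1/(2c_2)$, which is a nondegenerate maximum in $\mu$; substituting, the ``profile'' $\xi\mapsto \sup_{\mu>0}J(\mu,\xi)=C_0+\tfrac{c_1^2}{4c_2\la}\,(g_\la(\xi)_+)^2+o\big((\la-\la_0)^2\big)$ equals $C_0$ wherever $g_\la\le 0$ (in particular on $\partial\DD$ in case $(a)$) and has a strict interior maximum in $\DD$, at a point close to the maximizer of $g_{\la_0}$. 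A degree/linking argument over a suitable compact box in $(\mu,\xi)$-space then produces a critical point $(\mu_\la,x_\la)$ of $J$ with $x_\la\in\DD$. In case $(b)$ the equations $\ptl_{\xi_j}J=0$ reduce to $\nabla g_\la(x_\la)=0$, and since $D^2_x g_{\la_0}(x_0)$ is non-singular the implicit function theorem yields a unique nearby critical point $x_\la\to x_0$ of $g_\la$, with $|x_\la-x_0|=O(\la-\la_0)$ and $g_\la(x_\la)>0$. In both cases the resulting $u_\la=W_{\mu_\la,x_\la}+\phi_{\mu_\la,x_\la}$ has the stated profile with $\mu_\la=\gamma\,g_\la(x_\la)/\la>0$. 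Finally, writing $g_\la(x_\la)=g_{\la_0}(x_\la)+(\la-\la_0)\,\ptl_\la g_{\bar\la}(x_\la)$ and using that $g_{\la_0}(x_\la)\le 0$ in case $(a)$, resp. $g_{\la_0}(x_\la)=O(|x_\la-x_0|^2)=O((\la-\la_0)^2)$ in case $(b)$, together with the fact that $\ptl_\la g$ is bounded between two positive constants on compact subsets of $\Omega$ for $\la$ near $\la_0$, one obtains $\alpha(\la-\la_0)\le g_\la(x_\la)\le\beta(\la-\la_0)$, which is \eqref{pico1}; in particular $\mu_\la=O(\la-\la_0)$.

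The step I expect to be the main obstacle is the energy expansion above at the required precision. At the critical point $(\mu_\la,x_\la)$ the two leading terms $\mu g_\la(\xi)$ and $\la\mu^2$ are each of order $(\la-\la_0)^2$, so every remainder — coming from $\phi_{\mu,\xi}$, from the projections onto $\mathrm{span}\{Z_i\}$, and from the finite-domain corrections to the Aubin--Talenti integrals and to $\int_\Omega W_{\mu,\xi}^2$ — must be estimated at the level $o((\la-\la_0)^2)$, uniformly as $\la\downarrow\la_0$. Securing this near-criticality sharpness, rather than the by-now-standard linear theory and reduction scheme, is the heart of the argument, in the spirit of the Brezis--Nirenberg analysis near criticality in \cite{DDM}.
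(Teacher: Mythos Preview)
Your proposal is correct and follows essentially the same route as the paper: the same Green-function-corrected bubble ansatz (the paper writes it as $U_{\zeta,\mu}=w_{\zeta,\mu}+\pi_{\zeta,\mu}$ with $\mu^{-1/2}\pi_{\zeta,\mu}\sim -4\pi\,3^{1/4}H_\la$), the same projected linear/nonlinear scheme, the same energy expansion $a_0+a_1\mu g_\la(\zeta)-a_2\la\mu^2+\ldots$, and the same reduction first in $\mu$ (giving $\mu=\frac{a_1}{2a_2}\frac{g_\la(\zeta)}{\la}\Lambda$) and then in $\zeta$ via maximization in case~(a) and a degree/IFT argument in case~(b). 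The only minor slip is your derivation of the \emph{lower} bound in \eqref{pico1} for case~(a): the Taylor identity $g_\la(x_\la)=g_{\la_0}(x_\la)+(\la-\la_0)\,\ptl_\la g$ with $g_{\la_0}(x_\la)\le 0$ gives only the upper bound; the lower bound comes instead from the variational step you already did (the interior maximum of the profile forces $g_\la(x_\la)^2\ge (\sup_\DD g_\la)^2-o((\la-\la_0)^2)\ge c(\la-\la_0)^2$), exactly as the paper arranges by restricting to the set $\DD_\la=\{g_\la>\tfrac{A}{2}(\la-\la_0)\}$.
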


Of course, a natural question is whether or not values $\la_0$ with the above characteristic do exist. We shall prove that the number $\la_*$ given by \eqref{lastar} is indeed positive and finite, and that $\la_0= \la_*$ satisfies \eqref{pico}. That indeed proves Theorem \ref{teo1} as a corollary of Theorem \ref{teo2}.

\medskip
Implicit in condition (b) is the fact that $g_{\la_0}(x)$ is a smooth function and in inequality \eqref{pico1} the fact that $g_\la$ increases with $\la$. We have the validity of
the following result.

In this section we establish some properties of the function $g_\la(x)$ defined in \eqref{gla}.
We begin by proving that $g_\lambda (x)$ is a smooth function, which is strictly increasing in $\la$.

\begin{lemma}\label{primerlema}
The function $g_\lambda $ is of class $C^{\infty} (\Omega )$. Furthermore, the function ${\partial g_\lambda \over \partial \lambda }$ is well defined, smooth and strictly positive in $\Omega$. Its derivatives depend continuously on $\lambda$.
\end{lemma}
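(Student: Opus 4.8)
The plan is to realize $g_\lambda$ as an expression built from the regular part $H_\lambda(x,y)$ evaluated on the diagonal, and to study $H_\lambda$ through the linear elliptic problem \eqref{robin} it solves. First I would fix $y\in\Omega$ and view $H_\lambda(\cdot,y)$ as the unique solution of the boundary value problem $\Delta_x H_\lambda - \lambda H_\lambda = \frac{1}{4\pi|x-y|}$ in $\Omega$ with the Neumann data $\frac{\partial H_\lambda}{\partial\nu}(x,y)=\frac{\partial}{\partial\nu}\frac{1}{4\pi|x-y|}$ on $\partial\Omega$. Since $\lambda>0$, the operator $-\Delta+\lambda$ with Neumann conditions is invertible (its bilinear form $\int_\Omega(|\nabla v|^2+\lambda v^2)$ is coercive on $H^1(\Omega)$), so $H_\lambda(\cdot,y)$ is well defined in $H^1(\Omega)$; elliptic regularity up to the boundary, together with the fact that the right-hand side and the boundary data are smooth in $x$ away from $x=y$ and that near $x=y$ the combination is handled by the explicit Newtonian singularity already subtracted off in \eqref{Glambdadef}, gives $H_\lambda(\cdot,y)\in C^\infty(\overline\Omega)$ for each fixed $y$. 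Joint smoothness in $(x,y)$, hence smoothness of $x\mapsto g_\lambda(x)=H_\lambda(x,x)$, follows by differentiating the equation in $y$ as well: the $y$-derivatives of $\frac{1}{4\pi|x-y|}$ produce right-hand sides and boundary data that are again smooth off the diagonal, and one bootstraps.

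Next, for the dependence on $\lambda$, I would differentiate the defining problem \eqref{robin} formally in $\lambda$. Writing $\dot H := \partial_\lambda H_\lambda(\cdot,y)$, one gets
\begin{equation}
\Delta_x \dot H - \lambda \dot H = H_\lambda(\cdot,y) \inn \Omega, \qquad \frac{\partial \dot H}{\partial \nu} = 0 \onn \partial\Omega.
\end{equation}
To justify this rigorously I would set up the difference quotient $\big(H_{\lambda+h}(\cdot,y)-H_\lambda(\cdot,y)\big)/h$, show it solves the analogous problem with right-hand side $H_{\lambda+h}(\cdot,y)$, and pass to the limit using continuity of $\lambda\mapsto H_\lambda$ in, say, $C^2_{loc}$, which itself comes from the $\lambda$-continuity of the resolvent of $-\Delta+\lambda$ plus elliptic estimates. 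This gives that $\partial_\lambda H_\lambda$ exists, is smooth in $x$ (same elliptic-regularity argument, the right-hand side $H_\lambda$ being smooth), and depends continuously on $\lambda$; iterating yields continuous dependence of all $\lambda$-derivatives. Then $\partial_\lambda g_\lambda(x) = \partial_\lambda H_\lambda(x,x)$ by the chain rule.

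Finally, positivity of $\partial_\lambda g_\lambda$: I need $\dot H(x) := \partial_\lambda H_\lambda(x,x) > 0$ for $x\in\Omega$. The function $\dot H(\cdot,y)$ solves $(-\Delta+\lambda)\dot H = -\,H_\lambda(\cdot,y)$ in $\Omega$ with zero Neumann data, so $\dot H(\cdot,y) = -\int_\Omega G_\lambda(\cdot,z)\,H_\lambda(z,y)\,dz$ — this is not obviously signed, so instead I would argue via the Green's function directly. The cleanest route is the classical identity $\partial_\lambda G_\lambda(x,y) = -\int_\Omega G_\lambda(x,z)\,G_\lambda(z,y)\,dz$, obtained by differentiating $(-\Delta_x+\lambda)G_\lambda(x,y)=\delta_y$ in $\lambda$; evaluating the singular parts shows $\partial_\lambda H_\lambda(x,y) = \int_\Omega G_\lambda(x,z)\,G_\lambda(z,y)\,dz$ (the Newtonian singularity in $\Gamma$ is $\lambda$-independent, so it drops out), and putting $y=x$ gives $\partial_\lambda g_\lambda(x) = \int_\Omega G_\lambda(x,z)^2\,dz > 0$. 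This requires knowing $G_\lambda>0$ in $\Omega\times\Omega$, which follows from the maximum principle for $-\Delta+\lambda$ under Neumann conditions, and that $z\mapsto G_\lambda(x,z)$ is square-integrable near $z=x$ in dimension three, which holds since $\Gamma(x,z)\sim|x-z|^{-1}$ is in $L^2_{loc}(\R^3)$. I expect the main obstacle to be the rigorous justification of differentiating under the integral / interchanging $\partial_\lambda$ with the Green's representation near the diagonal — that is, making the formal identity $\partial_\lambda G_\lambda = -\,G_\lambda * G_\lambda$ precise despite the singularities — which I would handle by working with $H_\lambda$ (which is bounded) rather than $G_\lambda$ wherever possible and only using the convolution identity after the singular terms have been isolated.
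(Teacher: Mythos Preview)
Your positivity argument via $\partial_\lambda g_\lambda(x)=\int_\Omega G_\lambda(x,z)^2\,dz$ is correct and is essentially the paper's argument made explicit: the paper characterizes $F:=\partial_\lambda H_\lambda(x,\cdot)$ as the solution of $(-\Delta+\lambda)F=G_\lambda(x,\cdot)$ with zero Neumann data and then uses $G_\lambda>0$ together with the test function $F_-$; your integral is precisely the Green's representation of $F(x)$. Your outline for $\lambda$-differentiability via difference quotients is also sound.

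The genuine gap is in the smoothness part. You assert that elliptic regularity gives $H_\lambda(\cdot,y)\in C^\infty(\overline\Omega)$, but this is false. The source term in \eqref{robin} is proportional to $|x-y|^{-1}$, which lies only in $L^p$ for $p<3$, so Calder\'on--Zygmund estimates yield at best $H_\lambda(\cdot,y)\in W^{2,p}\hookrightarrow C^{0,\alpha}$ near $x=y$. In fact $H_\lambda$ is \emph{not} smooth across the diagonal: one computes $H_\lambda(x,y)=-\tfrac{\lambda}{8\pi}|x-y|+(\text{smoother})$, and $|x-y|$ is only Lipschitz at the diagonal. Your appeal to ``the Newtonian singularity already subtracted off in \eqref{Glambdadef}'' does not help: that subtraction removed the $\delta$-singularity from $G_\lambda$, but it is exactly the zeroth-order term $\lambda G_\lambda$ that then produces the singular source $\lambda\Gamma$ for $H_\lambda$. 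Differentiating in $y$ makes this source \emph{more} singular, so the bootstrap you describe fails.

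The paper's fix is an iterated explicit subtraction: write $H_\lambda(x,y)=\sum_{j=1}^k \beta_j\,|x-y|^{2j-1}+h_{k,\lambda}(x,y)$ with $\beta_1=-\lambda/(8\pi)$ and $\beta_{j+1}=-\lambda\beta_j/((2j+1)(2j+2))$. Each term $\beta_j|x-y|^{2j-1}$ absorbs one order of the singularity (since $\Delta|x|^{2j-1}=2j(2j-1)|x|^{2j-3}$), so $h_{k,\lambda}$ solves a Neumann problem with right-hand side $-\lambda\beta_k|x-y|^{2k-1}\in C^{2k-2}$ and is therefore $C^{k+1}$ jointly in $(x,y)$. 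The key observation is that every explicit term vanishes on the diagonal, so $g_\lambda(x)=h_{k,\lambda}(x,x)$ inherits the $C^{k+1}$ regularity for every $k$. Without some device of this kind you cannot conclude smoothness of $g_\lambda$ from regularity of $H_\lambda$ alone.
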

\begin{proof}We will show that $g_\lambda \in C^k $, for any $k$. Fix $x \in \Omega $. Let $h_{1,\lambda }$ be the function defined in $\Omega \times \Omega$ by the relation
\equ{
H_\lambda (x,y) = \beta_1\, |x-y| + h_{1,\lambda } (x,y)\,,}
where $\beta_1 =-{\lambda \over 8 \pi}$. Then $h_{1,\lambda }$ satisfies the boundary value problem
\equ{\left\lbrace
\begin{array}{rcll}
-\Delta_y h_{1,\lambda} +\lambda h_{1,\lambda}&=&-\lambda \beta_1|x-y|& \mbox{in } \Omega,\\
\ddn{h_{1,\lambda}(x,y)}&=&\ddn{\Gamma(x-y)}-\beta_1\ddn{|x-y|}&\mbox{on } \partial \Omega.
\end{array}\right.}

Elliptic regularity then yields that $h_{1, \lambda }(x,\cdot) \in C^{2} (\Omega)$. Its derivatives are clearly continuous as functions of the joint variable. Let us observe that the function $H_\lambda(x,y) $ is symmetric, thus so is $h_1$, and then $h_{1,\lambda}(\cdot,y)$ is also of class $C^2$ with derivatives jointly continuous. It follows that $h_{1 , \la} (x,y)$ is a function of class $C^2(\Omega\times\Omega)$. Iterating this procedure, we get that, for any $k$
$$
\label{acca}
H_\lambda (x,y) = \sum_{j=1}^k \beta_j |x-y|^{2j-1} + h_{k,\lambda} (x,y)
$$
with $\beta_{j+1} = -\lambda\,\beta_j/((2j+1)(2j+2))$ and $h_{k,\lambda}$ solution of the boundary value problem
\equ{\left\lbrace
\begin{array}{rcll}
-\Delta_y h_{k,\lambda} +\lambda h_{k,\lambda}&=&-\lambda \beta_k|x-y|^{2k-1}& \mbox{in }\Omega,\\
\ddn{h_{k,\lambda}(x,y)}&=&\ddn{\Gamma(x-y)}-\sum_{j=1}^k\beta_j\ddn{|x-y|^{2j-1}}&\mbox{on } \partial \Omega.
\end{array}\right.}
We may remark that
$-\Delta_yh_{k+1,\lambda}+\lambda\,h_{k,\lambda}=0$ in $\Omega\,.$
Elliptic regularity then yields that $h_{k,\lambda }$, is a function of class $ C^{k+1} (\Omega \times\Omega)$. Let us observe now that by definition of $g_\lambda$ we have $ g_\lambda (x) = h_{k, \lambda } (x,x), $ and this concludes the proof of the first part of the lemma.

\medskip
For a fixed given $x\in \Omega$, consider now the unique solution $F(y)$ of
$$
-\Delta_y F + \lambda\, F=G(x,y) \quad  y\in \Omega,\quad
\ddn{F}=0 \quad y\in \partial \Omega.
$$
Elliptic regularity yields that $F$ is at least of class $C^{0,\alpha}$. A convergence argument shows that actually
$F(y) = {\partial H_\lambda \over \partial \lambda } (x,y)\,.$
Since $\lambda>0$ and $G$ is positive in $\Omega$, using $F_-$ as a test function we get that $F_-=0$ in $\Omega$, thus $F>0$. Hence, in particular
$
{\partial g_\lambda \over \partial \lambda} (x) = F(x) >0\,.$
Arguing as before, this function turns out to be smooth in~$x$. The resulting expansions easily provide the continuous dependence in $\lambda$ of its derivatives in the $x$-variable.

\end{proof}


\begin{lemma}\label{lemag}
For each fixed $\la>0$ we have that
\begin{equation}
\label{terra1}
g_\la (x) \to -\infty, \quad {\mbox {as}} \quad x\to \partial \Omega.
\end{equation}
We define
$$
M_\la = \sup_{x \in \Omega} g_\la (x).
$$
Then
\begin{equation}\label{uno}
M_\la \to - \infty \quad {\mbox {as }  } \la \to 0^+,
\end{equation}
and
\begin{equation}\label{unouno}
M_\la > 0   \quad {\mbox {as }  }  \quad \la \to +\infty.
\end{equation}

\end{lemma}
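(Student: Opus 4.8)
The plan is to establish the three assertions by exploiting the variational/comparison characterization of $H_\la$ together with monotonicity in $\la$ that is already available from Lemma \ref{primerlema}. For \eqref{terra1}, I would fix $x$ near $\pp\Omega$ and compare $G_\la(x,\cdot)$ with the Green's function of a half-space or of a large ball: since $G_\la \geq 0$ (by the maximum principle, as used in the proof of Lemma \ref{primerlema}), and since $\Gamma(x,y)=\frac{1}{4\pi|x-y|}$ is the free-space fundamental solution, the Robin function $g_\la(x)=H_\la(x,x)$ picks up the boundary correction. The key observation is that the Neumann condition forces $H_\la(x,\cdot)$ to be large and negative near the boundary: writing the equation \eqref{robin} for $H_\la$, one sees that on $\pp\Omega$ the normal derivative of $H_\la$ matches that of $\Gamma$, and a barrier argument (comparing with an explicit solution in a tangent half-space, where one can compute the Neumann Green's function by reflection: the reflected charge has the \emph{same} sign, giving $H \sim +\frac{1}{4\pi|x-x^*|}$ with $x^*$ the reflection, hence $g(x)\to+\infty$ in the Dirichlet case but, crucially, the \emph{opposite} sign structure here) shows $g_\la(x)\to-\infty$. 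Concretely, I expect to test the equation for $H_\la(x,\cdot)$ against a suitable cutoff and use that $\int_\Omega \Gamma(x,y)\,dy$ stays bounded while the boundary flux term blows up; alternatively, and more cleanly, use the representation $g_\la(x) = -\lim_{y\to x}\big(\Gamma(x,y)-G_\la(x,y)\big)$ and the fact that $G_\la(x,y)\le \Gamma(x,y)$ fails to capture the boundary — so instead compare $G_\la$ from below by the Dirichlet Green's function of a slightly smaller domain and from above by that of $\R^3_+$ reflected, pinning the divergence rate as $\dist(x,\pp\Omega)\to 0$.

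For \eqref{uno}, I would use that $g_\la$ is \emph{increasing} in $\la$ (Lemma \ref{primerlema}) together with an upper bound as $\la\to0^+$. As $\la\to 0^+$, $G_\la$ converges to the Neumann Green's function $G_0$ of $-\Delta$, which does \emph{not} exist in the usual sense (the Neumann problem for $-\Delta$ requires a compatibility condition); more precisely $G_\la(x,y) \sim \frac{1}{\la|\Omega|} + G_N(x,y)$ where $G_N$ is the Neumann function with the mean subtracted. Hence $H_\la(x,y) = \Gamma(x,y)-G_\la(x,y) \to -\infty$ uniformly as $\la\to 0^+$, because the $\frac{1}{\la|\Omega|}$ term diverges to $+\infty$ and enters $H_\la$ with a minus sign. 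This gives $M_\la = \sup_x g_\la(x) = \sup_x H_\la(x,x) \to -\infty$. I would make this rigorous by integrating the equation \eqref{robin} over $\Omega$: using the Neumann condition, $-\la\int_\Omega H_\la(x,y)\,dy = \int_\Omega \frac{1}{4\pi|x-y|}\,dy + (\text{boundary flux of }\Gamma)$, and the right side is bounded uniformly in $x$ and $\la$, so $\int_\Omega H_\la(x,y)\,dy \to +\infty$ like $\frac{C}{\la}$; combined with elliptic estimates controlling oscillation of $H_\la(x,\cdot)$ relative to its mean (the oscillation stays bounded as $\la\to0$), this forces $H_\la(x,x)\to-\infty$ uniformly in $x$.

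For \eqref{unouno} — which as stated should read ``$M_\la>0$ for $\la$ large'' — I would exhibit one point $x$ with $g_\la(x)>0$ when $\la$ is large. The natural choice is a point far from $\pp\Omega$; there, rescaling by $\sqrt\la$ localizes the problem and $G_\la(x,y) \approx \frac{1}{4\pi|x-y|}e^{-\sqrt\la|x-y|}$, the fundamental solution of $-\Delta+\la$ in $\R^3$, so that $H_\la(x,y)\approx \frac{1-e^{-\sqrt\la|x-y|}}{4\pi|x-y|}$, whose diagonal limit is $g_\la(x)\approx \frac{\sqrt\la}{4\pi}>0$, up to exponentially small boundary corrections. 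Rigorously: compare $H_\la(x,\cdot)$ with $\phi(y):=\frac{\sqrt\la}{4\pi} - (\text{small boundary term})$, or simply note $H_\la(x,y) \ge \Gamma(x,y) - \Phi_\la(x,y)$ where $\Phi_\la(x,y)=\frac{e^{-\sqrt\la|x-y|}}{4\pi|x-y|}$ is the whole-space Green's function of $-\Delta+\la$ (since $G_\la \le \Phi_\la$ by the maximum principle, because $\Phi_\la$ solves the same equation on $\Omega$ with nonnegative — indeed we'd check the sign — Neumann data on $\pp\Omega$... this needs care), hence $g_\la(x) \ge \frac{\sqrt\la}{4\pi} - o(\sqrt\la) \to +\infty$.

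\medskip
\textbf{Main obstacle.} The delicate point is the sign bookkeeping in the comparison arguments, especially matching the Neumann boundary data correctly: unlike the Dirichlet case where $G_\la\le\Gamma$ cleanly, the Neumann Green's function is \emph{not} dominated by $\Gamma$, and the boundary correction $H_\la$ changes sign depending on $\dist(x,\pp\Omega)$ — it is large positive ($\sim\sqrt\la/4\pi$) deep inside for large $\la$ but $\to-\infty$ near the boundary and uniformly $\to-\infty$ as $\la\to0$. Getting clean two-sided barriers that are valid uniformly (in $x$ for fixed $\la$, and in $\la$ for the limiting statements) is where the real work lies; I would handle it by always comparing against explicit model Green's functions (whole space for $-\Delta+\la$, half-space by reflection, and the $\frac{1}{\la|\Omega|}$ constant mode) and absorbing the errors via standard Schauder/De Giorgi estimates on the difference, which solves a nice equation with controlled data.
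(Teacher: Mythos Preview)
Your approaches to \eqref{terra1} and \eqref{uno} are essentially the paper's. For \eqref{terra1} the paper also uses the reflected point $x^*$: setting $H^*(y,x)=\frac{1}{4\pi|y-x^*|}$, one checks that $H_\la(y,x)=-H^*(y,x)+O(1)$ and hence $g_\la(x)=-\frac{1}{8\pi\,\dist(x,\pp\Omega)}+O(1)\to-\infty$; your reflection idea is exactly this, though the surrounding alternatives you sketch (Dirichlet comparisons, testing against cutoffs) are vaguer than necessary. For \eqref{uno} the paper likewise integrates the equation to extract the mean $\frac{1}{|\Omega|}\int_\Omega H_\la(x,y)\,dy=-\frac{a}{\la|\Omega|}+O(1)$ and then decomposes $H_\la=p(x)+H_0+\hat H$ with $H_0$ the mean-zero Neumann correction and $\hat H=O(1)$ as $\la\to0$, exactly your ``divergent mean plus bounded oscillation'' strategy. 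Watch one sign slip: you write $\int_\Omega H_\la\to+\infty$, but the mean goes to $-\infty$ (the $+\frac{1}{\la|\Omega|}$ lives in $G_\la$, not in $H_\la$); your final conclusion is correct but that intermediate sentence is inverted.

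For \eqref{unouno} you take a genuinely different route. The paper argues by contradiction plus the divergence theorem: assuming $g_{\la_n}\le 0$ along $\la_n\to\infty$, it claims $-\Delta_y H_{\la_n}(\cdot,x_0)\to\infty$, while $\int_\Omega(-\Delta_y H_{\la_n})=-\int_{\pp\Omega}\pp_\nu\Gamma$ is a fixed finite number. Your route --- compare with the whole-space fundamental solution $\Phi_\la(x,y)=\frac{e^{-\sqrt\la|x-y|}}{4\pi|x-y|}$ and read off $g_\la(x)\approx\frac{\sqrt\la}{4\pi}$ at interior points --- is more constructive and yields the actual growth rate, not just positivity. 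One repair is needed, which you yourself flag: the one-sided comparison ``$G_\la\le\Phi_\la$ by the maximum principle'' does not work as stated, because $\pp_\nu\Phi_\la$ on $\pp\Omega$ is not of one sign. The clean execution of your idea is instead to set $v:=H_\la-(\Gamma-\Phi_\la)$; then $(-\Delta+\la)v=0$ in $\Omega$ with Neumann data $\pp_\nu v=\pp_\nu\Phi_\la=O\big((1+\sqrt\la)e^{-\sqrt\la\,\dist(x,\pp\Omega)}\big)$, and a standard elliptic estimate gives $\|v\|_\infty$ exponentially small in $\sqrt\la$ for $x$ at fixed positive distance from $\pp\Omega$, whence $g_\la(x)=\frac{\sqrt\la}{4\pi}+o(1)>0$ for large $\la$.
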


\medskip
\noindent

\begin{proof}

We prove first \eqref{terra1}.
Let $x \in \Omega$ be such that $d:= {\mbox {dist}} (x, \partial \Omega)$ is small. Then there exists a unique $\bar x \in \partial \Omega$ so that $d= |x-\bar x|$. It is not restrictive to assume that $\bar x = 0 $ and that the outer normal at $\bar x$ to $\partial \Omega$ points toward the $x_3$-direction. Let $x^*$ be the reflexion point, namely $x^* = (0,0,-d)$ and consider $H^* (y,x) = {1\over 4\pi | y-x^*|}$.  The function $y \to H^* (y,x)$ solves
$$
-\Delta_y \phi + \la \phi = \la \Gamma (y-x^* ) , \quad y \in \Omega, \quad {\partial \phi \over \partial \nu} =  {\partial \Gamma \over \partial \nu} (y-x^* ) , \quad y \in \partial \Omega.
$$
Observe now that $$\Gamma (y-x^* ) = {1\over 4\pi |x-y|} + {1\over 4\pi}  \left[ {|y-x|-|y-x^*| \over |y-x | \, |y-x^*| } \right] = {1\over 4\pi |x-y|}  + O(1), $$
 with $O(1)$ uniformly bounded, as $d \to 0$, for $y \in \partial \Omega$. This gives that $H_\la (y,x) = - H^* (y,x) + O(1)$, as $d \to 0$. Thus
$$
H_\la (x,x) = - {1\over 4\pi {\mbox {dist}} (x, \partial \Omega)} + O(1),
$$
as $d\to 0$. So we conclude the validity of \eqref{terra1}.

\medskip
Next we prove \eqref{uno} and \eqref{unouno}.

\noindent
Proof of \eqref{uno}. \ \ Let $p(x) := {1\over |\Omega| } \int_{\Omega} H_\la (x,y ) \, dy$. Observe that
$$
p(x) =  {1\over |\Omega| } \int_{\Omega} \Gamma (x-y ) \, dy +  {1\over \la |\Omega| } \int_{\Omega} \Delta H_\la (x,y ) \, dy = $$  $$
{1\over |\Omega| } \int_{\Omega} \Gamma (x-y ) \, dy +  {1\over \la |\Omega| } \int_{\partial \Omega} {\partial H_\la  \over \partial \nu }  \, d\sigma (y)=
$$
$$
{1\over |\Omega| } \int_{\Omega} \Gamma (x-y ) \, dy +  {1\over \la |\Omega| }
\int_{\partial \Omega} {\partial \Gamma  \over \partial \nu } (x-y)  \, d\sigma (y)
= -{a \over \la |\Omega | } + p_0 (x)
$$
where $a$ is a positive constant and $p_0 (x)$ is a bounded function. Define now
$H_0 (x,y)$ to be the bounded solution to
$$
-\Delta H_0 = {a\over |\Omega |} , \quad {\partial H_0 \over \partial \nu} = {\partial \Gamma \over \partial \nu } (x-y) \quad y \in \partial \Omega, \quad \int_{\Omega} H_0 = 0.
$$
We write
\begin{equation}
\label{terra2}
H_\la (x,y) = \underbrace{-{a\over \la |\Omega |} + p_0 (x) }_{=p(x) } + H_0 (x,y) + \hat H (x,y).
\end{equation}
By definition, $\hat H$ solves
$$
-\Delta \hat H + \la \hat H= \la \left[ \Gamma (x-y) - H_0 (x,y) + p_0 (x) \right], \quad
{\partial \hat H \over \partial \nu}=0 \quad {\mbox {on}} \quad \partial \Omega, \quad \int_{\Omega} \hat H = 0.
$$
Thus we have that $\hat H = O(1) $, as $\la \to 0$. Taking this into account, from decomposition \eqref{terra2} we conclude that
$$
\max_{x \in \Omega } g_\la (x) := \max_{x \in \Omega} H_\la (x,x) \leq -{a \over \la |\Omega|} + O(1) \to -\infty, \quad {\mbox {as}} \quad \la \to 0.
$$
This proves \eqref{uno}.

\medskip
\noindent
Proof of \eqref{unouno}. \ \  Assume, by contradiction, that for some sequence $\la_n \to \infty$, as $n \to \infty$, one has $\max_{x\in \Omega} g_{\la_n} (x) \leq -{1\over n}$. Fix $x_0 \in \Omega$, so that dist $(x_0 , \partial \Omega ) = \max_{x \in \Omega} {\mbox {dist}} (x, \partial \Omega )$. Thus we have that $-\Delta_y H_{\la_n} (y, x_0 ) \to \infty$, as $n \to \infty$. But on the other hand, a direct application of divergence theorem gives
$$
\int_\Omega \left( - \Delta_y H_{\la_n } (y, x_0 ) \right) \, dy = - \int_{\partial \Omega}
{\partial \Gamma \over \partial \nu} (x_0 - y) d\sigma (y) .
$$
The left side of the above identity converges to $\infty$ as $n \to \infty$, while the right and side is bounded. Thus we reach a contradiction, and \eqref{unouno} is proved.


\end{proof}

\medskip
The above considerations yield Theorem \ref{teo1} as a consequence of Theorem \ref{teo2}.

\medskip

\noindent
\begin{corollary}
The number $\la_*$ given by $\eqref{lastar}$ is well-defined and $0<\la_* < +\infty$. Besides,
the statement of Theorem \ref{teo1} holds true.
\end{corollary}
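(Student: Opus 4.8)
The plan is to promote Lemmas~\ref{primerlema} and~\ref{lemag} into a clean monotonicity--continuity statement for the map $\la\mapsto M_\la:=\sup_\Omega g_\la$, to identify $\la_*$ with the unique zero of this map, and then to feed a sublevel set of $g_{\la_*}$ into case $(a)$ of Theorem~\ref{teo2}. First I would record that for each fixed $\la>0$ the supremum $M_\la$ is attained at an interior point and is a finite real number: $g_\la\in C^\infty(\Omega)$ by Lemma~\ref{primerlema} and $g_\la(x)\to-\infty$ as $x\to\partial\Omega$ by \eqref{terra1}, so $g_\la$ already attains its supremum on a suitable compact subset of $\Omega$. Next, strict monotonicity: if $0<\la_1<\la_2$ and $x^\ast$ maximizes $g_{\la_1}$, then $M_{\la_2}\ge g_{\la_2}(x^\ast)>g_{\la_1}(x^\ast)=M_{\la_1}$ since $\partial_\la g_\la>0$ on $\Omega$ (Lemma~\ref{primerlema}). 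For continuity of $\la\mapsto M_\la$ on a compact interval $[\la_1,\la_2]$, I would note that $M_{\la_1}\le M_\la\le M_{\la_2}$ there by monotonicity, and that $g_\la(x)\le g_{\la_2}(x)\to-\infty$ at $\partial\Omega$ confines all maximizers of $g_\la$, $\la\in[\la_1,\la_2]$, to one fixed compact set $K\subset\Omega$; since $g_\la(x)$ depends continuously on the joint variable $(\la,x)$ (Lemma~\ref{primerlema}), $g_\la\to g_{\la_0}$ uniformly on $K$ as $\la\to\la_0$, whence $M_\la=\max_K g_\la\to\max_K g_{\la_0}=M_{\la_0}$.

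With this in hand, \eqref{uno} gives $M_\la<0$ for all small $\la>0$ while \eqref{unouno} gives $M_\la>0$ for all large $\la$; by the intermediate value theorem together with strict monotonicity there is a unique $\la_*\in(0,+\infty)$ with $M_{\la_*}=0$, and moreover $M_\la<0$ for $\la<\la_*$ and $M_\la>0$ for $\la>\la_*$. This value coincides with the number defined in \eqref{lastar}, so $\la_*$ is well defined and $0<\la_*<+\infty$.

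It then remains to check that $\la_0=\la_*$ satisfies hypothesis $(a)$ of Theorem~\ref{teo2}. Fix $\varepsilon>0$ and set $\DD:=\{x\in\Omega:\ g_{\la_*}(x)>-\varepsilon\}$. This set is open, nonempty (it contains every maximizer of $g_{\la_*}$, at which $g_{\la_*}=M_{\la_*}=0>-\varepsilon$), and by \eqref{terra1} its closure is a compact subset of $\Omega$; a standard argument shows $g_{\la_*}\equiv-\varepsilon$ on $\partial\DD$ (a boundary point is a limit of points of $\DD$, so $g_{\la_*}\ge-\varepsilon$ there, but it is not in $\DD$, so $g_{\la_*}\le-\varepsilon$ there). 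Hence $\sup_{\partial\DD}g_{\la_*}=-\varepsilon<0=\sup_{\DD}g_{\la_*}$, which is precisely \eqref{pico}. Theorem~\ref{teo2} then provides, for every $\la>\la_*$ close to $\la_*$, a solution $u_\la$ of \eqref{7} of the stated bubble form with $x_\la\in\DD$ --- hence uniformly away from $\partial\Omega$, since $\overline{\DD}$ is a fixed compact subset of $\Omega$ --- and $\mu_\la=\gamma g_\la(x_\la)/\la$; combined with the bounds $\alpha(\la-\la_*)\le g_\la(x_\la)\le\beta(\la-\la_*)$ from \eqref{pico1}, this yields $\mu_\la=O(\la-\la_*)\to 0$ as well as $g_\la(x_\la)\to 0=\sup_\Omega g_{\la_*}$, i.e. \eqref{limite}. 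This is exactly the assertion of Theorem~\ref{teo1}.

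The substantive content, namely the bubbling construction in Theorem~\ref{teo2}, is taken as given; for the corollary itself the only points that need genuine care are the locally-uniform-in-$\la$ confinement of the maximizers of $g_\la$ used to obtain continuity of $M_\la$, and the verification in the last step that a sublevel set of $g_{\la_*}$ is an admissible $\DD$ with a \emph{strict} drop of $g_{\la_*}$ across $\partial\DD$. Neither is deep, so I expect the write-up to be short.
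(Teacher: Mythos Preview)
Your proposal is correct and follows essentially the same route as the paper: invoke Lemmas~\ref{primerlema} and~\ref{lemag} to see that $\la\mapsto M_\la=\sup_\Omega g_\la$ is strictly increasing, continuous, negative for small $\la$ and positive for large $\la$, hence has a unique zero $\la_*$, and then feed a superlevel set of $g_{\la_*}$ into case~(a) of Theorem~\ref{teo2}. The paper's own proof is a three-line sketch of exactly this; you have simply filled in the two points it leaves implicit, namely the uniform confinement of the maximizers (to get continuity of $M_\la$) and the explicit choice $\DD=\{g_{\la_*}>-\varepsilon\}$ with the verification that $g_{\la_*}\equiv-\varepsilon$ on $\partial\DD$.
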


\proof
From Lemma \ref{primerlema}, and relations \eqref{uno} and \eqref{unouno}, we deduce that the number $\la_*$ is finite and positive.
Besides, by its definition and the continuity of $g_\la$, it clearly follows that
$$
 \sup_{x\in \Omega} g_{\la_*} (x) = 0.
$$ and that
there is an open set $\DD$ with compact closure inside $\Omega$ such that
$$
\sup_{\pp \DD} g_{\la_*}  < \sup_{\DD} g_{\la_*} = 0   .
$$
Hence,   Theorem \ref{teo1} follows from Theorem \ref{teo2}.
\qed

\bigskip
As it was stated in the introduction,
the number  $\lambda^*(\Omega)$ can be explicitly computed in the case $\Omega=B(0,1)$ as the following Lemma shows.

\begin{lemma}

Let $\Omega=B(0,1)$.
The number $\la^*$ defined in \eqref{lastar} is the unique solution of the equation
$$
\frac{\sqrt\la-1}{\sqrt \la +1}\exp{(2\sqrt \la)}=1,
$$
so that  $\la^*\approx 1.43923$.

\end{lemma}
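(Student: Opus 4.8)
The plan is to use the rotational symmetry of the ball to reduce the whole question to a one–dimensional computation. Since $B(0,1)$ is invariant under rotations and $G_\la$ is unique, one has $G_\la(Rx,Ry)=G_\la(x,y)$ for every rotation $R$, so $g_\la(x)=H_\la(x,x)$ depends only on $|x|$. I would first argue that the radial function $g_\la$ is nonincreasing in $|x|$, so that its supremum over $\Omega$ is attained at the origin and $M_\la=\sup_{\Omega}g_\la=g_\la(0)$. Granting this, the previous results (in particular the Corollary above, which gives $\sup_{\Omega}g_{\la^{*}}=0$, together with the strict monotonicity of $\la\mapsto g_\la$ from Lemma \ref{primerlema}) characterise $\la^{*}$ as the unique value at which $M_\la$ vanishes, with $M_\la<0$ for $\la<\la^{*}$ and $M_\la>0$ for $\la>\la^{*}$; hence it only remains to compute $g_\la(0)$ and locate its sign change.

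To compute $g_\la(0)$: the function $G_\la(\cdot,0)$ is radial, so writing $r=|x|$ and $G_\la(x,0)=u(r)/r$, the equation $\Delta G_\la-\la G_\la=0$ away from the origin reduces to $u''-\la u=0$, whence $u(r)=A e^{\sqrt\la\,r}+Be^{-\sqrt\la\,r}$. Since $G_\la(x,0)-\tfrac1{4\pi|x|}$ is bounded near $0$ we get $A+B=\tfrac1{4\pi}$, and the Neumann condition $G_\la'(1)=0$ gives $Ae^{\sqrt\la}(\sqrt\la-1)=Be^{-\sqrt\la}(\sqrt\la+1)$. Solving this $2\times 2$ system for $B-A$ and using $H_\la(x,0)=\tfrac1{4\pi|x|}-G_\la(x,0)$ together with a Taylor expansion at $r=0$, I obtain
\[
g_\la(0)=H_\la(0,0)=\sqrt\la\,(B-A)=\frac{\sqrt\la}{4\pi}\,\frac{(\sqrt\la-1)-(\sqrt\la+1)e^{-2\sqrt\la}}{(\sqrt\la-1)+(\sqrt\la+1)e^{-2\sqrt\la}}.
\]

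For the sign change, set $t=\sqrt\la$ and multiply numerator and denominator by $e^{t}$: the denominator becomes $2(t\cosh t-\sinh t)$, which is positive for all $t>0$ (it vanishes at $t=0$ and has derivative $2t\sinh t>0$), while the numerator becomes $2(t\sinh t-\cosh t)$, which is strictly increasing on $(0,\infty)$ (derivative $2t\cosh t>0$), equals $-2$ at $t=0$, and tends to $+\infty$. Hence $g_\la(0)$ is first negative, then positive, changing sign exactly once, precisely when $t\sinh t=\cosh t$, i.e. $(t-1)e^{t}=(t+1)e^{-t}$, i.e.
\[
\frac{\sqrt\la-1}{\sqrt\la+1}\,e^{2\sqrt\la}=1 .
\]
Since $t\mapsto\frac{t-1}{t+1}e^{2t}$ is negative on $(0,1)$, vanishes at $t=1$, and is strictly increasing on $(1,\infty)$ (its derivative is $\frac{2t^{2}e^{2t}}{(t+1)^{2}}$) with limit $+\infty$, this equation has a unique positive root $t^{*}\in(1,\infty)$, so $\la^{*}=(t^{*})^{2}$; numerically $t^{*}\approx1.19968$ and $\la^{*}\approx1.43923$, as claimed.

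The only genuinely nonroutine point is the claim $M_\la=g_\la(0)$, i.e. that the radial Robin function of the ball is maximised at the centre, and I expect this to be the main obstacle. It would in fact suffice to establish it at the single value $\la=\la^{*}$ (that $g_{\la^{*}}\le0$ throughout $B(0,1)$): combined with Lemma \ref{primerlema} this forces $M_\la<0$ for $\la<\la^{*}$, while $M_\la\ge g_\la(0)>0$ for $\la>\la^{*}$, so the crossover is exactly $(t^{*})^{2}$. I would prove it by expanding $G_\la$ in spherical harmonics and modified spherical Bessel functions: mode by mode, the Neumann boundary correction to the free Green's function is a positive multiple of $i_{l}(\sqrt\la|x|)\,i_{l}(\sqrt\la|y|)$, the positivity of the coefficient $-k_{l}'(\sqrt\la)/i_{l}'(\sqrt\la)$ coming from the facts that $i_{l}$ is increasing and $k_{l}$ decreasing; on the diagonal this correction is therefore a nondecreasing function of $|x|$, and since $g_\la(x)$ equals $\tfrac{\sqrt\la}{4\pi}$ minus this correction, it is nonincreasing in $|x|$ and hence maximised at the centre.
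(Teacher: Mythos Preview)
Your approach is essentially the same as the paper's: both solve the radial ODE $u''-\la u=0$ for $G_\la(\cdot,0)=u(r)/r$, impose the singularity and Neumann conditions, and read off $g_\la(0)$; your expression $\frac{\sqrt\la}{4\pi}\frac{(\sqrt\la-1)-(\sqrt\la+1)e^{-2\sqrt\la}}{(\sqrt\la-1)+(\sqrt\la+1)e^{-2\sqrt\la}}$ is algebraically identical to the paper's formula. Where you go further is in two places the paper leaves implicit. First, the paper simply asserts in one sentence that the extremum of $H_\la(x,x)$ is attained at $x=0$, with no justification; you correctly flag this as the only nontrivial step and supply a valid argument via the spherical-harmonic expansion $h(x,x)=\sum_l c_l(2l+1)\,i_l(\sqrt\la|x|)^2$ with $c_l=-\frac{\sqrt\la}{4\pi}\,k_l'(\sqrt\la)/i_l'(\sqrt\la)>0$, which indeed shows $g_\la(x)=\frac{\sqrt\la}{4\pi}-h(x,x)$ is radially nonincreasing. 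Second, the paper does not justify uniqueness of the root beyond the phrase ``the unique value'', whereas your hyperbolic rewriting $g_\la(0)=\frac{\sqrt\la}{4\pi}\,\frac{t\sinh t-\cosh t}{t\cosh t-\sinh t}$ together with the derivative computation $\big(\frac{t-1}{t+1}e^{2t}\big)'=\frac{2t^2e^{2t}}{(t+1)^2}$ gives a clean monotonicity proof. So your proposal is correct and strictly more complete than the paper's own argument.
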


\proof
The minimizer of $H_\la(x,x)$ is attained at $x=0$.
We compute the value $H_\lambda(0,0)$ for $\lambda>0$. The function $G_\lambda(0,y)$ is radially symmetric and it satisfies the equation

\begin{equation}
\label{gr}
-\Delta_y G_\lambda + \lambda\, G_\lambda =  \delta_0 \quad y\in B(0,1), \quad
\pp_r G_\lambda (0,y) = 0 \quad \; y \in \partial B(0,1).
\end{equation}
Letting $r=|y|$, we have
\eq{G_\la(0,y)=\frac{1}{4\pi r}
\left[e^{-\sqrt \la r}+\frac{2\sinh(\sqrt \la r)}{1+\frac{\sqrt\la-1}{\sqrt \la +1}\exp{(2\sqrt \la)}}
\right]\label{GreenBall}.}
Indeed, $ {e^{\sqrt{\la} r } \over r}$ and $ {e^{-\sqrt{\la} r } \over r}$ are radial solutions to $\Delta \phi +\la \phi=0$ for $|y|>0$. If we define
$$
\mathcal{G}_A (r) =\frac{A}{r} \left[e^{\sqrt \la r}+e^{2\sqrt \la }\frac{\sqrt \la -1}{\sqrt \la +1}e^{-\sqrt \la r} \right],
$$
where $A$ is a constant, then
 $\pp_r\mathcal{G}_A =0$ on  $\partial B(0,1)$.
Since $\lim_{|y|\rightarrow 0}|y|G_\la (0,y)=\frac{1}{4\pi}$,
if we choose
$$
A_\la=\frac{1}{4\pi}\frac{1}{1+\frac{\sqrt \la-1}{\sqrt \la+1}\exp{(2\sqrt \la)}}
$$
then $\mathcal{G}_{A_\la}$ satisfies \eqref{gr}. By uniqueness $\mathcal{G}_{A_\la}=G_\la (0,y)$, and we get \eqref{GreenBall}. Thus
$$
H_\la(0,y)=\frac{1}{4\pi r}
\left[(1-e^{-\sqrt \la r})-\frac{2\sinh(\sqrt \la r)}{1+\frac{\sqrt\la-1}{\sqrt \la +1}\exp{(2\sqrt \la)}}
\right],
$$
and
$$
g_\la(0)=H_\la(0,0)=\frac{1}{4\pi}\left[\sqrt \la - \frac{2\sqrt \la }{1+\frac{\sqrt \la -1}{\sqrt \la +1}\exp{2\sqrt \la}} \right].
$$
We deduce that $\la^*$ is the unique value such that $g_{\la^*} (0) =0$, therefore $\la^*$ satisfies
$$
\frac{\sqrt\la-1}{\sqrt \la +1}\exp{(2\sqrt \la)}=1.
$$
Then $\la^*\approx 1.43923$.
\qed

\bigskip

  The rest of this paper will be devoted to the proof of Theorem \ref{teo2}.  The proof consists of the construction of a suitable first approximation of a bubbling solution
  for $\la$ slightly above $\la_0$. The problem is then reduced, via a finite dimensional variational reduction procedure, to one in which the variables are the location of the bubbling point and the corresponding scaling parameter. That problem can be solved thanks to the assumptions made. We carry out this procedure in Sections \S 3-\S 7.

The rest of this work will be devoted to the proof of Theorem \ref{teo2}.
In Section \ref{sec2} we define an approximate solution $U_{\zeta , \mu}$, for any given point $\zeta \in \Omega$, and any positive number $\mu$,
and we compute its energy $E_\lambda ( U_{\zeta , \mu})$, where
\eq{\label{Energy}
E_\lambda(u)=\frac{1}{2}\int_\Omega |\nabla u|^2+\frac{\lambda}{2}\int_\Omega |u|^2-\frac{1}{6}\int_\Omega |u|^6.}
In Section \ref{Sec:Critical1} we establish that in the situation of Theorem \ref{teo2} there are critical points of $E_{\lambda}(U_{\mu,\zeta})$ which persist under properly small perturbations of the functional.
Observe now that,
for $\varepsilon>0$, if we consider the transformation
\equ{u(x)=\frac{1}{\varepsilon^{1/2}}v\left(\frac{x}{\varepsilon}\right)}
then $v$ solves the problem
\eq{\left\lbrace
\begin{array}{rcll}
-\Delta v+\varepsilon^2\lambda v-v^5&=&0&
v>0\quad \mbox{in } \quad \Omega_\varepsilon,\\
\ddn{v}&=&0&\mbox{on }\partial \Omega_\varepsilon,
\end{array}\right.\label{problem2}}
where $\Omega_\varepsilon=\varepsilon^{-1}\Omega$. We will look for a solution of (\ref{problem2}) of the form $v=V+\phi$,
where $V$ is defined as
$U_{\zeta,\mu}(x)=\frac{1}{\varepsilon^{1/2}}V\left(\frac{x}{\varepsilon}\right)$,
and $\phi $ is a smaller perturtation.
In Section \ref{sec4} we discuss a linear problem that will be useful to find the perturbation $\phi$. This is done in Section \ref{sec5}. We conclude our construction in the final argument, in Section \ref{Sec:Variational}.

\section{Energy expansion}\label{sec2}

We fix a point $\zeta\in\Omega$ and a positive number $\mu$. We denote in what follows
\equ{w_{\zeta,\mu}(x)=3^{1/4}\frac{\mu^{1/2}}{\sqrt{\mu^2+|x-\zeta|^2}}}
which correspond to all positive solutions of the problem
\equ{-\Delta w-w^5=0,\quad \mbox{in } \R^3.}
We define $\pi_{\zeta,\mu}(x)$ to be the unique solution of the problem
\eq{\label{problempi}\left\lbrace
\begin{array}{rcll}
-\Delta \pi_{\zeta,\mu}+\lambda \pi_{\zeta,\mu}&=&-\lambda w_{\zeta,\mu}&\mbox{in }\Omega,\\
\ddn{\pi_{\zeta,\mu}}&=&-\ddn{w_{\zeta,\mu}}&\mbox{on }\partial \Omega.
\end{array}\right.}
We consider as a first approximation of the solution of (\ref{1}) one of the form
\begin{equation}\label{ansatz1}U_{\zeta,\mu}=w_{\zeta,\mu}+\pi_{\zeta,\mu}.
\end{equation}
Observe that $U_{\zeta,\mu}$ satisfies the problem
\eq{\left\lbrace
\begin{array}{rcll}
-\Delta U_{\zeta,\mu}+\lambda U_{\zeta,\mu}&=&w_{\zeta,\mu}^5&\mbox{in }\Omega,\\
\ddn{U_{\zeta,\mu}}&=&0&\mbox{on }\partial \Omega.
\end{array}\right.\label{ProbAprox}}
Let us also observe that
\equ{\int_{\Omega}w^5_{\zeta,\mu}=C\mu^{1/2}\left(1+o(1)\right),\quad \mbox{as }\mu\rightarrow 0,}
which implies that
$ \frac{w^5_{\zeta,\mu}}{\int_{\Omega}w^5_{\zeta,\mu}}\rightarrow 0,$ as $\mu\rightarrow 0 $,
uniformly on compacts subsets of $\overline{\Omega}\setminus\lbrace\zeta\rbrace$. It follows that on each of this subsets
\eq{U_{\zeta,\mu}(x)=\left(\int_{\Omega}w_{\zeta,\mu}^5\right)G(x,\zeta)=C\mu^{1/2}\left(1+o(1)\right)G_\lambda (x,\zeta) \label{Green}}
where $G_\lambda (x,\zeta)$ denotes the Green's function defined in \eqref{Glambdadef}.

Using the transformation $U_{\zeta,\mu}(x)=\frac{1}{\varepsilon^{1/2}}V\left(\frac{x}{\varepsilon}\right)$ we see that $V$ solves the problem
\equ{\left\lbrace
\begin{array}{rcll}
-\Delta V+\varepsilon^2\lambda V-w_{\zeta',\mu'}^5&=&0&\mbox{in }\Omega_\varepsilon,\\
\ddn{V}&=&0&\mbox{on }\partial \Omega_\varepsilon,
\end{array}\right.\label{ProbAproxTrans}}
where $w_{\zeta',\mu'}(x)=3^{1/4}\frac{\mu'^{1/2}}{\sqrt{\mu'^2+|x-\zeta'|^2}}$ and $\zeta'=\varepsilon^{-1} \zeta$, $\mu'=\varepsilon^{-1}\mu$.
\newline

The following lemma establishes the relationship between the functions $\pi_{\zeta,\mu}(x)$ and the regular part of the Green's function $G_\la (\zeta,x)$. Let us consider the (unique) radial solution $\DD_0(z)$ of the problem in entire space,
\equ{\left\lbrace
\begin{array}{rcll}
-\Delta \DD_0&=&\lambda 3^{1/4}\left[\frac{1}{\sqrt{1+|z|^2}}-\frac{1}{|z|}\right]&\mbox{in }\R^3,\\
\DD_0&\rightarrow&0&\mbox{as }|z|\rightarrow \infty.
\end{array}\right.}
$\DD_0(z)$ is a $C^{0,1}$ function with $\DD_0(z)\sim |z|^{-1}\log|z|$, as $|z|\rightarrow \infty.$
\begin{lema}{\label{lemaPi}}
For any $\sigma>0$ we have the validity of the following expansion as $\mu\rightarrow 0$
\eq{\label{expansionpi}\mu^{-1/2}\pi_{\mu,\zeta}(x)=-4\pi 3^{1/4}H_\lambda(\zeta,x)-\mu \DD_0\left(\frac{x-\zeta}{\mu}\right)+\mu^{2-\sigma}\theta(\zeta,\mu,x) .}
where for $j=0,1,2$, $i=0,1$ $i+j\leq 2$, the function $\mu^j\frac{\partial^{i+j}}{\partial \zeta^i \partial \mu^j}\theta(\zeta,\mu,x)$ is bounded uniformly on $x\in \Omega$, all small $\mu$ and $\zeta$, in compacts subsets of $\Omega$. We recall that $H_\la$ is the function defined in \eqref{robin}.
\end{lema}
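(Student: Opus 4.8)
The plan is to directly construct the claimed decomposition of $\mu^{-1/2}\pi_{\zeta,\mu}$ by subtracting off, one at a time, the singular/leading contributions and controlling the remainder via elliptic estimates. First I would record the near-$\zeta$ behavior of $w_{\zeta,\mu}$: writing $z=(x-\zeta)/\mu$ one has $w_{\zeta,\mu}(x)=\mu^{-1/2}\,3^{1/4}(1+|z|^2)^{-1/2}$, and the function $\mu^{-1/2}\,3^{1/4}|x-\zeta|^{-1}=\mu^{1/2}\,3^{1/4}|z|^{-1}/\mu$ is precisely $4\pi\,3^{1/4}\mu^{1/2}\,\Gamma(x-\zeta)$. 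Since $\Gamma(\cdot-\zeta)$ is harmonic away from $\zeta$, the natural first correction to $\pi_{\zeta,\mu}$ is $-4\pi\,3^{1/4}\mu^{1/2}H_\lambda(\zeta,x)$: indeed by \eqref{robin} the function $-4\pi\,3^{1/4}\mu^{1/2}H_\lambda(\zeta,\cdot)$ solves $-\Delta(\cdot)+\lambda(\cdot)=4\pi\,3^{1/4}\mu^{1/2}\lambda\,H_\lambda(\zeta,x)$ in $\Omega$ with Neumann data $-4\pi\,3^{1/4}\mu^{1/2}\,\partial_\nu\Gamma(x-\zeta)=-\partial_\nu w_{\zeta,\mu}+(\text{smooth lower order})$ on $\partial\Omega$, because near $\partial\Omega$ the point $\zeta$ is at fixed distance, so $w_{\zeta,\mu}-\mu^{1/2}\,3^{1/4}/|x-\zeta|=O(\mu^{5/2})$ together with its derivatives, uniformly on $\partial\Omega$.

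Next I would introduce the second correction $-\mu\cdot\mu^{1/2}\mathcal D_0((x-\zeta)/\mu)$ (a term of size $\mu^{3/2}$ after the $\mu^{1/2}$ normalization is stripped, consistent with the statement). Set $z=(x-\zeta)/\mu$; by the defining equation of $\mathcal D_0$, the function $x\mapsto\mu\,\mathcal D_0((x-\zeta)/\mu)$ satisfies $-\Delta_x[\mu\,\mathcal D_0(\tfrac{x-\zeta}{\mu})]=\mu^{-1}\lambda\,3^{1/4}[(1+|z|^2)^{-1/2}-|z|^{-1}]=\lambda[w_{\zeta,\mu}(x)\mu^{1/2}-3^{1/4}\mu^{1/2}|x-\zeta|^{-1}]\mu^{-1/2}$, i.e. it is designed to cancel, in the equation for $\pi_{\zeta,\mu}$, exactly the combination $-\lambda w_{\zeta,\mu}$ minus the piece already absorbed by the $H_\lambda$-term. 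Because $\mathcal D_0(z)\sim|z|^{-1}\log|z|$ at infinity, $\mu\,\mathcal D_0((x-\zeta)/\mu)=O(\mu^2\log(1/\mu))$ pointwise for $x$ away from $\zeta$, and in particular its Neumann trace on $\partial\Omega$ is $O(\mu^2\log(1/\mu))$ together with the required derivatives in $\zeta$ and $\mu$; these are lower-order relative to the already-matched boundary data.

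Defining $\theta$ by \eqref{expansionpi}, a computation then shows that $\mu^{2-\sigma}\theta(\zeta,\mu,\cdot)$ solves a Neumann problem $-\Delta(\cdot)+\lambda(\cdot)=R_1$ in $\Omega$, $\partial_\nu(\cdot)=R_2$ on $\partial\Omega$, where $R_1$ collects the leftover terms: $\lambda$ times the $H_\lambda$-correction (size $O(\mu^{1/2})$ — wait, this must be re-bookkept: after dividing by $\mu^{1/2}$ the leftover source is $O(\mu)$ from the $H_\lambda$ term's own potential term, plus $O(\mu^{?})$ from the far-field decay of $\mathcal D_0$), and $R_2$ collects the Neumann discrepancies, each of order $\mu^{3/2}$ or better with the stated derivatives in $(\zeta,\mu)$. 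I would then invoke elliptic $L^\infty$/Schauder estimates for the Neumann problem for $-\Delta+\lambda$ (uniform in the parameters since $\zeta$ ranges over a compact subset of $\Omega$) to conclude $\|\theta\|_{L^\infty(\Omega)}$ is bounded; the parameter-derivative bounds on $\mu^j\partial_\zeta^i\partial_\mu^j\theta$ follow by differentiating the equations in $\zeta$ and $\mu$ — here one uses that each $\partial_\mu$ costs a factor $\mu^{-1}$ on the $\mathcal D_0$-term (hence the weights $\mu^j$ in the statement) — and applying the same elliptic estimates to the differentiated problems.

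The main obstacle, and the place where care is genuinely needed, is the bookkeeping of orders and the uniformity of the elliptic estimates under the parameter differentiations: one must verify that every source and boundary term produced after the two explicit subtractions is $o(\mu^{3/2-\sigma})$ (so that the remainder really is $\mu^{2-\sigma}\theta$ with $\theta$ bounded), and that differentiating in $\mu$ — which differentiates the rescaled profile $\mathcal D_0((x-\zeta)/\mu)$ and produces factors of $\mu^{-1}$ — still leaves bounds that close, which is exactly why the statement weights the $j$-th $\mu$-derivative by $\mu^j$. The far-field logarithm in $\mathcal D_0(z)\sim|z|^{-1}\log|z|$ is harmless since any $\mu^{-\sigma}$ with $\sigma>0$ absorbs it, but it is the reason the exponent in the error is stated as $2-\sigma$ rather than $2$. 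Everything else is routine: the construction of the correctors is dictated term-by-term by the equations they are built to solve, and the passage from the PDE to the estimate is standard linear theory for the invertible operator $-\Delta+\lambda$ with Neumann conditions.
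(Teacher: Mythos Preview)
Your approach is essentially identical to the paper's: subtract the two explicit correctors $-4\pi\,3^{1/4}H_\lambda(\zeta,\cdot)$ and $-\mu\,\DD_0((\cdot-\zeta)/\mu)$ from $\mu^{-1/2}\pi_{\zeta,\mu}$, write the Neumann problem solved by the remainder, and close with elliptic estimates, then differentiate in the parameters. Where you stall---the bookkeeping you flag as ``the main obstacle''---is exactly the one computation the paper carries out cleanly, and it is worth stating so you see there is no hidden difficulty.

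Set $\DD_1(x)=\mu\,\DD_0((x-\zeta)/\mu)$ and $S_1=\mu^{-1/2}\pi_{\zeta,\mu}+4\pi\,3^{1/4}H_\lambda(\zeta,\cdot)+\DD_1$. Using that $(-\Delta+\lambda)H_\lambda(\zeta,\cdot)=\lambda\,\Gamma(\cdot-\zeta)$ and that $-\Delta\DD_1=\lambda[\mu^{-1/2}w_{\zeta,\mu}-4\pi\,3^{1/4}\Gamma(\cdot-\zeta)]$ (but with no $\lambda\DD_1$ term, since $\DD_0$ is defined via $-\Delta$ alone), everything cancels except the potential term of $\DD_1$:
\[
-\Delta S_1+\lambda S_1=\lambda\,\DD_1\quad\text{in }\Omega,\qquad \partial_\nu S_1=O(\mu^3\log\mu)\ \text{on }\partial\Omega.
\]
So the leftover source is not ``$O(\mu)$ from $H_\lambda$ plus $O(\mu^{?})$'' as you worried; it is \emph{exactly} $\lambda\DD_1$. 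The size of this source is then read off in $L^p$: for any $p>3$,
\[
\|\DD_1\|_{L^p(\Omega)}^p\le \mu^{p+3}\int_{\R^3}|\DD_0|^p,
\]
so $\|\DD_1\|_{L^p}\le C_p\,\mu^{1+3/p}$, and elliptic regularity plus Sobolev embedding give $\|S_1\|_\infty=O(\mu^{2-\sigma})$ for any $\sigma>0$ by taking $p$ large. This is precisely why the exponent is $2-\sigma$: the logarithm in $\DD_0$ plays no special role, it is the scaling of $\DD_1$ in $L^p$ that produces the loss. The parameter derivatives go exactly as you describe: one differentiates the equation for $S_1$, observes that $\partial_\zeta\DD_1=-\nabla\DD_0((x-\zeta)/\mu)$ and $\mu\,\partial_\mu\DD_1=\mu(\DD_0-z\cdot\nabla\DD_0)((x-\zeta)/\mu)$ obey the same $L^p$ scaling, and applies the same elliptic estimate.
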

\begin{proof}
Let us set $\DD_1(x)=\mu \DD_0(\mu^{-1}(x-\zeta))$, so that $\DD_1$ satisfies
\equ{\left\lbrace
\begin{array}{rcll}
-\Delta \DD_1&=&\lambda\left[\mu^{-1/2}w_{\zeta,\mu}(x)-4\pi 3^{1/4}\Gamma(x-\zeta)\right]& x\in \Omega,\\
\ddn{\DD_1}&\sim&\mu^3\log\mu& \mbox{on }\partial\Omega, \mbox{ as }\mu\rightarrow 0.
\end{array}\right.}
Let us write
$S_1(x)=\mu^{-1/2}\pi_{\zeta,\mu}(x)+4\pi 3^{1/4}H_\lambda(\zeta,x)+\DD_1(x).$
With the notation of Lemma \ref{lemaPi}, this means
\equ{S_1(x)=\mu^{2-\sigma}\theta(\mu,\zeta,x).}
Observe that for $x\in \partial \Omega$, as $\mu\rightarrow 0$,
\equ{\nabla (\mu^{-1/2}w_{\zeta,\mu}(x)+4\pi 3^{1/4}\Gamma(x-\zeta))\cdot \nu \sim \mu^2 |x-\zeta|^{-5}.}
Using the above equations we find that $S_1$ satisfies
\eq{\label{eqS1}\left\lbrace
\begin{array}{rcll}
-\Delta S_1+\lambda S_1&=&\lambda \DD_1& x\in \Omega,\\
\ddn{S_1}&=&O(\mu^3\log \mu)& \mbox{on }\partial\Omega.
\end{array}\right.}
Observe that, for any $p>3$,
\equ{\int_{\Omega}|\DD_1(x)|^pdx\leq \mu^{p+3}\int_{\R^3}|\DD_0(x)|^pdx,}
so that $\|\DD_1\|_{L^p}\leq C_p \mu^{1+3/p}$. Elliptic estimates applied to problem (\ref{eqS1}) yield that, for any $\sigma>0$, $\|S_1\|_{\infty}=O(\mu^{2-\sigma})$ uniformly on $\zeta$ in compacts subsets of $\Omega$. This yields the assertion of the lemma for $i,j=0$.

We consider now the quantity $S_2=\partial_{\zeta}S_1$. Observe that $S_2$ satisfies
\equ{\left\lbrace
\begin{array}{rcll}
-\Delta S_2+\lambda S_2&=&\lambda \partial_{\zeta}\DD_1& x\in \Omega,\\
\ddn{S_2}&=&O(\mu^3\log \mu)& \mbox{on }\partial\Omega.
\end{array}\right.}
Observe that $\partial_{\zeta}\DD_1(x)=-\nabla D_0\left(\frac{x-\zeta}{\mu}\right)$, so that for any $p>3$,
\equ{\int_\Omega |\partial_\zeta \DD_1(x)|^p dx\leq \mu^{3+p}\int_{\R^3}|\nabla \DD_0(x)|^p dx }
We conclude that $\|S_2\|_\infty=O(\mu^{2-\sigma})$, for any $\sigma>0$. This gives the proof of the lemma for $i=1$, $j=0$. Now we consider $S_3=\mu\partial_{\mu}S_1$. Then
\equ{\left\lbrace
\begin{array}{rcll}
-\Delta S_3+\lambda S_3&=&\lambda \mu \partial_{\mu}\DD_1& x\in \Omega,\\
\ddn{S_3}&=&O(\mu^3\log \mu)& \mbox{on }\partial\Omega.
\end{array}\right.}
Observe that
\equ{\mu\partial_\mu D_1(x)=\mu(\DD_0-\overline{\DD}_0)\left(\frac{x-\zeta}{\mu}\right),}
where $\overline{\DD}_0(z)=\nabla \DD_0 (z)\cdot z$. Thus, similarly as the estimate for $S_1$ itself we obtain  $\|S_3\|_\infty=O(\mu^{2-\sigma})$, for any $\sigma>0$. This yields the assertion of the lemma for $i=0$, $j=1$. The proof of the remaining estimates comes after applying again $\mu\partial_\mu$ to the equations obtained for $S_2$ and $S_3$ above, and the desired result comes after exactly the same arguments. This concludes the proof.
\end{proof}
Classical solutions to (\ref{1}) correspond to critical points of the energy functional \eqref{Energy}.
If there was a solution very close to $U_{\zeta^*,\mu^*}$ for a certain pair $(\zeta^*,\mu^*)$, then we would formally expect $E_{\lambda}$ to be nearly stationary with respect to variations of $(\zeta,\mu)$ on $U_{\zeta,\mu}$ around this point. It seems important to understand critical points of the functional $(\zeta,\mu)\rightarrow E_{\lambda}(U_{\zeta,\mu})$. In the following lemma we find explicit asymptotic expressions for this functional.
\begin{lema}
For any $\sigma>0$, as $\mu\rightarrow 0$, the following expansion holds
\eq{E_\lambda(U_{\zeta,\mu})=a_0+a_1\mu g_\lambda(\zeta)-a_2\mu^2 \lambda - a_3\mu^2g_{\lambda}^2(\zeta)+\mu^{3-\sigma}\theta(\zeta,\mu)  \label{energyExp}}
where for $j=0,1,2$, $i=0,1$, $i+j\leq 2$, the function $\mu^j\frac{\partial^{i+j}}{\partial \zeta^i \partial \mu^j}\theta(\zeta,\mu)$ is bounded uniformly on all small $\mu$ and $\zeta$ in compact subsets of $\Omega$. The $a_i$'s are explicit positive constants, given by relation \eqref{constantes} bellow.
\end{lema}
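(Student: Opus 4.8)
The plan is to compute $E_\lambda(U_{\zeta,\mu})$ directly by inserting the decomposition $U_{\zeta,\mu}=w_{\zeta,\mu}+\pi_{\zeta,\mu}$ into the functional \eqref{Energy}, using the equation \eqref{ProbAprox} that $U_{\zeta,\mu}$ satisfies together with the expansion of $\pi_{\zeta,\mu}$ provided by Lemma \ref{lemaPi}. First I would rewrite $E_\lambda(U_{\zeta,\mu})$ by testing the equation $-\Delta U_{\zeta,\mu}+\lambda U_{\zeta,\mu}=w_{\zeta,\mu}^5$ against $U_{\zeta,\mu}$ itself: this gives $\int_\Omega|\nabla U_{\zeta,\mu}|^2+\lambda\int_\Omega U_{\zeta,\mu}^2=\int_\Omega w_{\zeta,\mu}^5 U_{\zeta,\mu}$, so that
\equ{E_\lambda(U_{\zeta,\mu})=\frac12\int_\Omega w_{\zeta,\mu}^5 U_{\zeta,\mu}-\frac16\int_\Omega U_{\zeta,\mu}^6
=\frac13\int_\Omega w_{\zeta,\mu}^6+\frac12\int_\Omega w_{\zeta,\mu}^5\pi_{\zeta,\mu}+\frac16\int_\Omega\big(w_{\zeta,\mu}^6-U_{\zeta,\mu}^6\big)+\frac16\int_\Omega w_{\zeta,\mu}^6 .}
Actually it is cleaner to keep the form $E_\lambda=\tfrac12\int w^5 U-\tfrac16\int U^6$ and expand $U^6=(w+\pi)^6$ via the binomial/Taylor formula, keeping terms up to the order that produces the $\mu^2$ contributions and absorbing the rest into $\mu^{3-\sigma}\theta$.

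The main computations are then three: (i) the leading constant $a_0=\tfrac13\int_{\R^3}w_{0,1}^6=\tfrac13\int_{\R^3}\big(3^{1/4}(1+|y|^2)^{-1/2}\big)^6$, coming from $\int_\Omega w_{\zeta,\mu}^6$ which is $\mu$-independent up to exponentially small corrections from the tail outside $\Omega$; (ii) the first-order term, where one uses $\pi_{\zeta,\mu}(x)\approx -4\pi 3^{1/4}\mu^{1/2}H_\lambda(\zeta,x)$ near $x=\zeta$ (Lemma \ref{lemaPi}) so that $\tfrac12\int_\Omega w_{\zeta,\mu}^5\pi_{\zeta,\mu}\approx -2\pi 3^{1/4}\mu^{1/2}H_\lambda(\zeta,\zeta)\int_\Omega w_{\zeta,\mu}^5\approx -2\pi 3^{1/4}\big(\int_{\R^3}w_{0,1}^5\big)\,\mu\,g_\lambda(\zeta)$, giving $a_1\mu g_\lambda(\zeta)$ with $a_1$ an explicit positive multiple of $(\int w_{0,1}^5)\cdot 3^{1/4}$ (note $w_{\zeta,\mu}^5$ concentrates as a multiple of $\mu^{1/2}\delta_\zeta$); (iii) the second-order terms. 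For (iii) one expands $\pi_{\zeta,\mu}$ to the next order — the term $-\mu^{3/2}\mathcal D_0((x-\zeta)/\mu)$ contributes $-a_2\mu^2\lambda$ after integrating $w^5\cdot\mathcal D_0$ against the explicit profile (the factor $\lambda$ tracking the $\lambda$ in the equation defining $\mathcal D_0$), and the quadratic term $\tfrac16\binom62\int w^4\pi^2$ together with the correction from $-\tfrac16\int(U^6-w^6-6w^5\pi)$ produces $-a_3\mu^2 g_\lambda^2(\zeta)$, since $\pi^2\approx (4\pi 3^{1/4})^2\mu\, H_\lambda(\zeta,x)^2$ concentrates to give $\mu^2 g_\lambda(\zeta)^2$ times $\int w_{0,1}^4$. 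All lower-order remainders, and the boundary/tail errors from $\Omega$ being bounded rather than all of $\R^3$, are collected into $\mu^{3-\sigma}\theta(\zeta,\mu)$.

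The regularity statement for $\theta$ — that $\mu^j\partial_\zeta^i\partial_\mu^j\theta$ stays bounded for $i+j\le 2$ — is inherited from the corresponding statement for $\theta$ in Lemma \ref{lemaPi}: one differentiates the integral identity for $E_\lambda(U_{\zeta,\mu})$ in $\zeta$ and $\mu$, noting that each $\partial_\zeta$ on $w_{\zeta,\mu}$ costs $\mu^{-1}$ and each $\mu\partial_\mu$ is scale-invariant, then re-runs the same estimates with the already-controlled derivatives of $\pi_{\zeta,\mu}$ from Lemma \ref{lemaPi}. I expect the main obstacle to be bookkeeping: correctly identifying which cross terms in the expansion of $(w+\pi)^6$ are genuinely of size $\mu^2$ (and thus must be kept and evaluated into $a_2,a_3$) versus those that are $o(\mu^2)$, and controlling the interaction between the slowly-decaying tail of $\mathcal D_0$ (which grows like $|z|^{-1}\log|z|$) and the region near $\partial\Omega$; this is where the $\mu^{2-\sigma}$ loss in Lemma \ref{lemaPi} enters and forces the $\mu^{3-\sigma}$ (rather than $\mu^3$) remainder. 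The explicit values of $a_0,\dots,a_3$ in \eqref{constantes} then follow from the standard integrals $\int_{\R^3}(1+|y|^2)^{-k}$.
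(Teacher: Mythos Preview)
Your approach is essentially the paper's: after testing $-\Delta U_{\zeta,\mu}+\lambda U_{\zeta,\mu}=w_{\zeta,\mu}^5$ against $U_{\zeta,\mu}$ and expanding $(w+\pi)^6$, both you and the paper reduce $E_\lambda(U_{\zeta,\mu})$ to
\[
\frac13\int_{\R^3}w_{\zeta,\mu}^6\;-\;\frac12\int_\Omega w_{\zeta,\mu}^5\pi_{\zeta,\mu}\;-\;\frac52\int_\Omega w_{\zeta,\mu}^4\pi_{\zeta,\mu}^2\;+\;\hbox{remainder},
\]
and then feed in Lemma~\ref{lemaPi}. The only difference is cosmetic: the paper reaches this identity by splitting $E_\lambda$ into six pieces I--VI and integrating by parts separately, whereas your single integration by parts on $U$ gets there in one step.

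One bookkeeping slip to flag: in your displayed line and in item (ii) you carry the coefficient $+\tfrac12$ on $\int w^5\pi$, but once you actually expand $-\tfrac16\int U^6=-\tfrac16\int w^6-\int w^5\pi-\tfrac52\int w^4\pi^2-\cdots$ and combine with the $+\tfrac12\int w^5\pi$ from $\tfrac12\int w^5U$, the net coefficient is $-\tfrac12$. Since $\int_\Omega w^5\pi\approx -4\pi\,3^{1/4}\mu\,g_\lambda(\zeta)\int_{\R^3}w_{0,1}^5$ by Lemma~\ref{lemaPi}, it is precisely this extra sign flip that makes $a_1=2\pi\,3^{1/4}\int w_{0,1}^5>0$. (Your displayed formula also has a stray extra $+\tfrac16\int w^6$.) With that corrected, your sketch matches the paper's proof.
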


\begin{proof}
Observe that
\equ{E_{\lambda}(U_{\zeta,\mu})=\mbox{I}+\mbox{II}+\mbox{III}+\mbox{IV}+\mbox{V}+\mbox{VI},}
where
\begin{align*}
\mbox{I}	&=\int_{\Omega} \left(\frac{1}{2}|\nabla w_{\zeta,\mu}|^2-\frac{1}{6}w_{\zeta,\mu}^6  \right),\quad
\mbox{II}	=\int_{\Omega}  \left(\nabla w_{\zeta,\mu}\cdot \nabla \pi_{\zeta,\mu}-w_{\zeta,\mu}^5\pi_{\zeta,\mu} \right),\\
\mbox{III}	&=\frac{1}{2}\int_{\Omega} \left[|\nabla \pi_{\zeta,\mu}|^2+\lambda(w_{\zeta,\mu}+\pi_{\zeta,\mu})\pi_{\zeta,\mu} \right],\\
\mbox{IV}	&=\frac{\lambda}{2} \int_\Omega (w_{\zeta,\mu}+\pi_{\zeta,\mu})w_{\zeta,\mu},\quad
\mbox{V}	=-\frac{5}{2}\int_\Omega w_{\zeta,\mu}^4\pi_{\zeta,\mu}^2,\\
\mbox{VI}	&=-\frac{1}{6}\int_\Omega \left[(w_{\zeta,\mu}+\pi_{\zeta,\mu})^6-w_{\zeta,\mu}^6-6w_{\zeta,\mu}^5\pi_{\zeta,\mu}-15 w_{\zeta,\mu}^4\pi_{\zeta,\mu}^2  \right].
\end{align*}
Multiplying equation $-\Delta w_{\zeta,\mu}=w_{\zeta,\mu}^5$ by $w_{\zeta,\mu}$ and integrating by parts in $\Omega$ we obtain
\begin{align*}
\mbox{I}&=\frac{1}{2}\int_{\partial \Omega} \ddn{w_{\zeta,\mu}}w_{\zeta,\mu}+\frac{1}{3}\int_\Omega w_{\zeta,\mu}^6 \\
		&=\frac{1}{2}\int_{\partial \Omega} \ddn{w_{\zeta,\mu}}w_{\zeta,\mu}+\frac{1}{3}\int_{\R^3} w_{\zeta,\mu}^6 -\frac{1}{3}\int_{\R^3\setminus\Omega} w_{\zeta,\mu}^6.
\end{align*}
Now, testing the same equation against $\pi_{\zeta,\mu}$, we find
\equ{\mbox{II}=\int_{\partial \Omega} \ddn{w_{\zeta,\mu}}\pi_{\zeta,\mu}=-\int_{\partial\Omega}\ddn{\pi_{\zeta,\mu}}\pi_{\zeta,\mu},}
where we have used the fact that $\pi_{\zeta,\mu}$ solves problem (\ref{problempi}). Testing the equation $-\Delta \pi_{\zeta,\mu}+\lambda \pi_{\zeta,\mu}=-\lambda w_{\zeta,\mu}$ against $\pi_{\zeta,\mu}$ and integrating by parts in $\Omega$, we get
\equ{\mbox{III}=\frac{1}{2}\int_{\partial \Omega}\ddn{\pi_{\zeta,\mu}}\pi_{\zeta,\mu}.}
Testing equation $-\Delta w_{\zeta,\mu}=w_{\zeta,\mu}^5$ against $U_{\zeta,\mu}=w_{\zeta,\mu}+\pi_{\zeta,\mu}$ and integrating by parts twice, we obtain
\equ{\mbox{IV}=\frac{1}{2}\int_{\partial \Omega}\ddn{\pi_{\zeta,\mu}}\pi_{\zeta,\mu}-\frac{1}{2}\int_{\partial\Omega}\ddn{w_{\zeta,\mu}}w_{\zeta,\mu}-\frac{1}{2}\int_\Omega w_{\zeta,\mu}^5\pi_{\zeta,\mu}.}
From the mean value formula, we get
\equ{\mbox{VI}=-10\int_0^1ds(1-s)^2\int_{\Omega}(w_{\zeta,\mu}+s\pi_{\zeta,\mu})^3\pi_{\zeta,\mu}^3.}
Adding up the previous expressions we get so far
\eq{\label{expand}E_{\lambda}(U_{\zeta,\mu})=\frac{1}{3}\int_{\R^3}w_{\zeta,\mu}^6-\frac{1}{2}\int_{\Omega} w_{\zeta,\mu}^5\pi_{\zeta,\mu}-\frac{5}{2}\int_{\Omega}w_{\zeta,\mu}^4\pi_{\zeta,\mu}^2+\mathcal{R}_1,}
where
\eq{\mathcal{R}_1=-\frac{1}{3}\int_{\R^3\setminus \Omega}w_{\zeta,\mu}^6-10\int_0^1ds(1-s)^2\int_{\Omega}(w_{\zeta,\mu}+s\pi_{\zeta,\mu})^3\pi_{\zeta,\mu}^3.  \label{R1}}
We will expand the second integral term of expression (\ref{expand}). Using the change of variable $x=\zeta+\mu z$ and calling $\Omega_\mu=\mu^{-1}(\Omega-\zeta)$, we find that
\equ{A_1= \int_\Omega w_{\zeta,\mu}^5\pi_{\zeta,\mu}dx=\mu\int_{\Omega_{\mu}}w_{0,1}^5(z)\mu^{-1/2}\pi_{\zeta,\mu}(\zeta+\mu z)dz.}
From Lemma \ref{lemaPi}, we have the expansion
\equ{\mu^{-1/2}\pi_{\zeta,\mu}(\zeta+\mu z)=-4\pi 3^{1/4}H_\lambda (\zeta+\mu z,\zeta)-\mu \DD_0(z)+\mu^{2-\sigma}\theta(\zeta,\mu,\zeta+\mu z).}
According to Lemma \ref{primerlema},
\equ{H_\lambda(\zeta+\mu z,\zeta)=g_\lambda(\zeta)-\frac{\lambda}{8\pi}\mu|z|+\Theta(\zeta,\zeta+\mu z),}
where $\Theta$ is a function of class $C^2$ with $\Theta(\zeta,\zeta)=0$. Using this fact , we obtain
\equ{A_1=-4\pi 3^{1/4}\mu g_\lambda(\zeta)\int_{\R^3}w_{0,1}^5(z)dz-\mu^2\int_{\R^3}w_{0,1}^5(z)\left[\DD_0(z)-\frac{3^{1/4}}{2}\lambda |z|  \right]dz+\mathcal{R}_2   }
with
\begin{align}  \label{R2}
\mathcal{R}_2=&\mu \int_{\Omega_\mu} w_{0,1}^5(z)[\Theta(\zeta,\zeta+\mu z)+\mu^{2-\sigma}\theta(\zeta,\mu,\zeta+\mu z)  ]dz \\
&+ \mu^2\int_{\R^3\setminus \Omega_\mu}w_{0,1}^5(z)\left[\DD_0(z)-\frac{3^{1/4}}{2}\lambda |z|  \right]dz
+ 4\pi 3^{1/4}\mu g_\lambda(\zeta)\int_{\R^3\setminus \Omega_\mu}w_{0,1}^5(z)dz.  \nonumber
\end{align}
Let us recall that
$-\Delta \DD_0=3^{1/4}\lambda \left[\frac{1}{\sqrt{1+|z|^2}}-\frac{1}{|z|}\right],$
so that,
\begin{align*}
-\int_{\R^3} w_{0,1}^5\DD_0(z)&=\int_{\R^3}\Delta w_{0,1}\DD_0(z)\\
							 &=\int_{\R^3}w_{0,1}\Delta\DD_0(z)=3^{1/4}\lambda \int_{\R^3} w_{0,1}\left[\frac{1}{|z|}-\frac{1}{\sqrt{1+|z|^2}} \right].
\end{align*}
Combining the above relations we get
\begin{align*}
A_1=&-4\pi 3^{1/4}\mu g_\lambda(\zeta)\int_{\R^3}w_{0,1}^5(z)dz\\
&-\mu^2\lambda 3^{1/4}\int_{\R^3}\left[w_{0,1}(z)\left(  \frac{1}{\sqrt{1+|z|^2}}-\frac{1}{|z|} \right)-\frac{1}{2}w_{0,1}^5 |z|  \right]dz+\mathcal{R}_2.
\end{align*}
Let us consider now $A_2=\int_{\Omega}w_{\zeta,\mu}^4\pi_{\zeta,\mu}^2$. We have
\begin{align*}
A_2 =&\mu \int_{\Omega_\mu}w_{0,1}^4(z)\pi_{\zeta,\mu}^2(\zeta+\mu z)dz\\
	=&\mu^2\int_{ \Omega_\mu}w_{0,1}^4(z)\left[-4\pi 3^{1/4}H_\lambda (\zeta+\mu z,\zeta)-\mu \DD_0(z)+\mu^{2-\sigma}\theta(\zeta,\mu,\zeta+\mu z)\right]^2dz,
\end{align*}
which we expand as
\equ{A_2=\mu^2g_\lambda^2(\zeta)16\pi^23^{1/2}\int_{\R^3}w_{0,1}^4+\mathcal{R}_3.}
Combining relation (\ref{expand}) with the above expressions, we get so far
\equ{E_\lambda(U_{\zeta,\mu})=a_0+a_1\mu g_\lambda (\zeta)-a_2\lambda \mu^2-a_3\mu^2g_\lambda^2(\zeta)+\mathcal{R}_1-\frac{1}{2}\mathcal{R}_2-\frac{5}{2}\mathcal{R}_3,}
where
\begin{align*}
a_0=&\frac{1}{3}\int_{\R^3}w_{0,1}^6,\quad
a_1= 2\pi3^{1/4}\int_{\R^3}w_{0,1}^5, \quad a_3= 40\pi^23^{1/2}\int_{\R^3}w_{0,1}^4\\
a_2=&\frac{3^{1/4}}{2}\int_{\R^3} \left[w_{0,1}(z)\left( \frac{1}{|z|}-\frac{1}{\sqrt{1+|z|^2}} \right)+\frac{1}{2}w_{0,1}^5 |z|  \right]dz .
\end{align*}
An explicit computation shows that
\eq{\label{constantes} a_0=\frac{1}{4}\sqrt{3}\pi^2,\quad a_1=8\sqrt{3}\pi^2,\quad a_2=\sqrt{3}\pi^2,\quad a_3=120\sqrt{3}\pi^4.}

Finally, we want to establish the estimate
$\mu^j \frac{\partial^{i+j}}{\partial \zeta^i \partial \mu^j}\mathcal{R}_l=O(\mu^{3-\sigma}),  $
for each $j=0,1,2$, $i=0,1$, $i+j\leq 2$, $l=1,2,3$, uniformly on all small $\mu$ and $\zeta$ in compact subsets of $\Omega$. Arguing as in the proof of Lemma 2.1 in \cite{DMP} we get the validity of the previous estimates. This concludes the proof.
\end{proof}


\section{Critical single-bubbling}\label{Sec:Critical1}

The purpose of this section is to establish that in the situation of Theorem \ref{teo2} there are critical points of $E_{\lambda}(U_{\mu,\zeta})$ which persist under properly small perturbations of the functional. As we shall rigorously establish later, this analysis does provide critical points of the full functional $E_{\lambda}$, namely solutions of (\ref{1}), close to a single bubble of the form $U_{\mu, \zeta}$.
\medskip

Let us suppose the situation (a) of local maximizer:
\equ{
0 = \sup_{x\in\D} g_{\lambda_0} (x) > \sup_{x\in\partial\D} g_{\lambda_0} (x)\,.}
Then for $\lambda$ close to $\lambda_0$, $\lambda >\lambda_0$, we have
\equ{
\sup_{x\in\D} g_{\lambda} (x) > A\,(\lambda -\lambda_0),\quad A>0.}
Let us consider the shrinking set
\equ{\DD_\lambda = \left\{ y \in \D \, : \, g_\lambda (x) >  \frac A2 (\lambda -\lambda_0) \right\}.}
Assume $\lambda >\lambda_0$ is sufficiently close to $\lambda_{0}$ so that $g_\lambda = \frac A2 (\lambda -\lambda_0)$ on $\partial \DD_\lambda$.

\medskip Now, let us consider the situation of Part (b). Since $g_\lambda (\zeta )$ has a non-degenerate critical point at $\lambda = \lambda_0$ and $\zeta = \zeta_0$, this is also the case at a certain critical point $\zeta_\lambda$ for all $\lambda$ close to $\lambda_0$ where $|\zeta_\lambda -\zeta_0| = O(\lambda -\lambda_0)$.
\medskip

Besides, for some intermediate point $\tilde \zeta_\lambda$,
\equ{
g_\lambda (\zeta_\lambda ) = g_\lambda (\zeta_0 ) + Dg_\lambda ( \tilde \zeta_\lambda ) (\zeta_\lambda -\zeta_0) \ge A(\lambda- \lambda_0) + o(\lambda -\lambda_0)}
for a certain $A>0$. Let us consider the ball $B_\rho^\lambda$ with center $\zeta_\lambda$ and radius $\rho\, (\lambda -\lambda_0)$ for fixed and small $\rho >0$. Then we have that $g_\lambda (\zeta ) > \frac A2 (\lambda -\lambda_0)$ for all $\zeta \in B_\rho^\lambda$. In this situation we set $\DD_\lambda = B_\rho^\lambda$.

\medskip

It is convenient to make the following relabeling of the parameter $\mu$. Let us set
\eq{
\mu \equiv  \frac {a_1}{2\,a_2}\,\frac{g_\lambda (\zeta )}{\lambda}\, \Lambda \,,\label{mu3}}
where $\zeta \in \DD_\lambda$, and $a_1$, $a_2$ are the constants introduced in \eqref{energyExp}.  We have the following result.
\begin{lema}\label{psi} Assume the validity of one of the conditions (a) or (b) of Theorem \ref{teo2}, and consider a functional of the form
\eq{\psi_\lambda (\Lambda , \zeta ) = E_{\lambda } (U_{\mu , \zeta}) + g_\lambda(\zeta )^2\, \theta_\lambda (\Lambda ,\zeta )}
where $\mu$ is given by (\ref{mu3}) and
\eq{
 |\theta_\lambda | + | \nabla \theta_\lambda | + | \nabla \partial_\Lambda \theta_\lambda | \to 0, \quad {\mbox {as}} \quad \la \downarrow \la_0
\label{dla}}
uniformly on $\zeta \in \DD_\lambda$ and $ \Lambda \in (\delta , \delta^{-1})$. Then $\psi_\lambda$ has a critical point $(\Lambda_\lambda , \zeta_\lambda)$ with $\zeta_\lambda \in \DD_\lambda$, $\Lambda_\lambda \to 1$.
\end{lema}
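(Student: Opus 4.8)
The plan is to treat $\psi_\lambda$ as a small perturbation of the leading-order energy functional and to localize a critical point by a degree- or min-max-type argument that is stable under the perturbation \eqref{dla}. First I would substitute the relabeling \eqref{mu3} into the expansion \eqref{energyExp}. Writing $\mu = \frac{a_1}{2a_2}\frac{g_\lambda(\zeta)}{\lambda}\Lambda$, the dominant terms $a_1\mu g_\lambda(\zeta) - a_2\mu^2\lambda$ become $\frac{a_1^2}{2a_2}\frac{g_\lambda(\zeta)^2}{\lambda}\bigl(\Lambda - \tfrac12\Lambda^2\bigr)$, i.e. $g_\lambda(\zeta)^2$ times a fixed concave function of $\Lambda$ with a strict maximum at $\Lambda = 1$. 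The remaining terms $-a_3\mu^2 g_\lambda^2(\zeta)$ and $\mu^{3-\sigma}\theta$, as well as the extra term $g_\lambda(\zeta)^2\theta_\lambda$, are all of size $o(g_\lambda(\zeta)^2)$ on $\DD_\lambda$ once we use that $g_\lambda(\zeta) = O(\lambda-\lambda_0)$ there (note $a_3\mu^2 g_\lambda^2 \sim g_\lambda^4$ and $\mu^{3-\sigma} \sim g_\lambda^{3-\sigma}$, both negligible against $g_\lambda^2$). So after dividing by $g_\lambda(\zeta)^2$ we obtain
\eq{
\frac{\psi_\lambda(\Lambda,\zeta)}{g_\lambda(\zeta)^2} = a_0 \cdot \frac{1}{g_\lambda(\zeta)^2} + \frac{a_1^2}{2a_2\lambda}\Bigl(\Lambda - \tfrac12\Lambda^2\Bigr) + o(1),
}
where the $o(1)$ and its first derivatives in $(\Lambda,\zeta)$ — after the rescaling — tend to $0$ as $\lambda\downarrow\lambda_0$; here one must be a little careful because $a_0/g_\lambda(\zeta)^2$ is large, but $a_0$ is a constant so it drops out upon differentiating in $\Lambda$, and in $\zeta$ it contributes a term pointing along $\nabla g_\lambda$, which is exactly what the variational structure in $\zeta$ (maximization or nondegenerate critical point) is designed to handle.

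Next I would set up the actual critical point argument. In case (a), consider maximizing $\psi_\lambda(\Lambda,\zeta)$ over the compact set $\overline{\DD_\lambda}\times[\delta,\delta^{-1}]$ for suitable fixed small $\delta$. The maximum is attained at some $(\Lambda_\lambda,\zeta_\lambda)$. One shows it is an interior point: for $\zeta$, because on $\partial\DD_\lambda$ one has $g_\lambda = \frac{A}{2}(\lambda-\lambda_0)$ strictly smaller than $\sup_{\DD_\lambda}g_\lambda$, and since $\psi_\lambda \approx \text{(positive const)}\cdot g_\lambda(\zeta)^2 + a_0$ plus lower order, the value at a near-maximizer of $g_\lambda$ in the interior beats the boundary value; for $\Lambda$, because $\Lambda-\frac12\Lambda^2$ has its max at $\Lambda=1\in(\delta,\delta^{-1})$ and the correction is $C^1$-small, the maximizing $\Lambda_\lambda$ stays in a small neighborhood of $1$, in particular away from $\delta,\delta^{-1}$. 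Hence $(\Lambda_\lambda,\zeta_\lambda)$ is an interior critical point, and $\Lambda_\lambda\to 1$. In case (b), $g_{\lambda}$ has a nondegenerate critical point $\zeta_\lambda$ with value $g_\lambda(\zeta_\lambda)\ge A(\lambda-\lambda_0)(1+o(1))$, and $\DD_\lambda = B_\rho^\lambda$; here I would argue via the $C^1$-closeness of $(\Lambda,\zeta)\mapsto \psi_\lambda/g_\lambda(\zeta)^2$ minus the constant part to the model $\frac{a_1^2}{2a_2\lambda}(\Lambda-\frac12\Lambda^2)$ together with the nondegeneracy of the critical point of $g_\lambda$: the map whose zero we seek is $(\partial_\Lambda\psi_\lambda, \nabla_\zeta\psi_\lambda)$, and to leading order this is $\bigl(g_\lambda(\zeta)^2\cdot\frac{a_1^2}{2a_2\lambda}(1-\Lambda),\; 2 g_\lambda(\zeta)\nabla g_\lambda(\zeta)\cdot[\text{const}]\bigr)$ plus controlled lower-order terms, so a fixed-point / degree argument (or the implicit function theorem applied to the rescaled system) produces a zero near $(\Lambda,\zeta)=(1,\zeta_0)$.

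The main obstacle — and the step I would be most careful about — is making the perturbation estimates genuinely uniform after the rescaling $\mu = \frac{a_1}{2a_2}\frac{g_\lambda(\zeta)}{\lambda}\Lambda$. The expansion \eqref{energyExp} controls $\mu^j\partial_\zeta^i\partial_\mu^j\theta$, but once $\mu$ is tied to $\zeta$ through $g_\lambda(\zeta)$, derivatives in $\zeta$ at fixed $\Lambda$ produce extra factors of $\nabla g_\lambda(\zeta)/g_\lambda(\zeta)$, which could be large if $g_\lambda$ were small compared to its gradient; on $\DD_\lambda$ both $g_\lambda$ and $\nabla g_\lambda$ are comparable to $\lambda-\lambda_0$ (in case (b) $\nabla g_\lambda$ can even be smaller), so these ratios stay bounded, but verifying this requires combining the lower bound $g_\lambda\gtrsim\lambda-\lambda_0$ on $\DD_\lambda$ with the smoothness and monotonicity of $g_\lambda$ from Lemma \ref{primerlema}. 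Once that bookkeeping is done, the statement that $\psi_\lambda/g_\lambda(\zeta)^2$ minus its (critical-point-irrelevant) constant part converges in $C^1_{\mathrm{loc}}$ to the model functional follows, and the existence of the asserted critical point $(\Lambda_\lambda,\zeta_\lambda)$ with $\zeta_\lambda\in\DD_\lambda$ and $\Lambda_\lambda\to 1$ is then a routine consequence of the stability of strict local maxima (case (a)) or nondegenerate critical points (case (b)) under $C^1$-small perturbations.
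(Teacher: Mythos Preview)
Your proposal is correct and follows essentially the same strategy as the paper: substitute \eqref{mu3} into \eqref{energyExp}, recognize that the leading part is $a_0 + \frac{a_1^2}{4a_2}\frac{g_\lambda(\zeta)^2}{\lambda}(2\Lambda-\Lambda^2)$, absorb the remaining terms into $g_\lambda(\zeta)^2\theta_\lambda$, and then locate a critical point by perturbation.

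There is one organizational difference worth noting. The paper does not work with $\psi_\lambda/g_\lambda(\zeta)^2$ (which, as you observe, forces you to carry the awkward singular term $a_0/g_\lambda(\zeta)^2$); instead it first solves $\partial_\Lambda\psi_\lambda=0$ directly. Since every $\Lambda$-dependent term carries a factor $g_\lambda(\zeta)^2$, dividing that equation through yields $\Lambda = 1 + o(1)$ with $o(1)$ small in $C^1$, giving a function $\Lambda_\lambda(\zeta)$ by the implicit function theorem. One is then left with the reduced one-variable functional $p_\lambda(\zeta)=\psi_\lambda(\Lambda_\lambda(\zeta),\zeta)=a_0 + c\,g_\lambda(\zeta)^2[1+o(1)]$, and the critical point in $\zeta$ is found exactly as you outline (maximization in case (a); for case (b), the paper computes $\nabla p_\lambda = 2c\,g_\lambda[\nabla g_\lambda + o(1)g_\lambda]$ and runs a homotopy-degree argument on $\partial B_\rho^\lambda$). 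This two-step reduction avoids the bookkeeping you flag in your last paragraph---the ratios $\nabla g_\lambda/g_\lambda$ never appear---and makes case (b) particularly clean, but the underlying mechanism is the same as yours.
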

\begin{proof} Using the expansion for the energy with $\mu$ given by (\ref{mu3}) we find now that
\eq{
\psi_\lambda ( \Lambda , \zeta ) \equiv E_{\lambda } (U_{\zeta ,\mu})+ g_\lambda(\zeta )^2\, \theta_\lambda (\Lambda ,\zeta ) = a_0+ {a_1^2\over 4\,a_2} \,\frac{g_\lambda(\zeta)^2}\lambda\,\left[ 2\,\Lambda - \Lambda^2 \right] + g_\lambda(\zeta )^2\, \theta_\lambda (\Lambda ,\zeta )}
where $\theta_\lambda$ satisfies property (\ref{dla}). Observe then that $ \partial_\Lambda \psi_\lambda = 0 $ if and only~if
\eq{
\Lambda = 1 + o(1)\,\theta_\lambda (\Lambda , \zeta) \,,}
where $\theta_\lambda$ is bounded in $C^1$-sense, as $\la \downarrow \la_0$. This implies the existence of a unique solution close to $1$ of this equation, $\Lambda = \Lambda_\lambda (\zeta ) = 1+ o(1)$ with $o(1)$ small in~$C^1$ sense, as $\la \downarrow \la_0$. Thus we get a critical point of $\psi_\lambda $ if we have one of
\eq{
p_\lambda (\zeta ) \equiv \psi_\lambda ( \Lambda_\lambda (\zeta ), \zeta ) = a_0+c\,g_\lambda (\zeta )^2\,[1 + o(1) ]}
with $o(1) \to 0$ as $\la \downarrow \la_0$ in~$C^1$-sense and $c>0$. In the case of Part (a), {\sl i.e.\/} of the maximizer, it is clear that we get a local maximum in the region $\DD_\lambda$ and therefore a critical point.

\smallskip Let us consider the case (b). With the same definition for $p_\lambda$ as above, we have
\eq{
\nabla p_\lambda (\zeta ) = 2c g_\lambda (\zeta ) \Big[\nabla g_\lambda + o(1)\, g_\lambda \Big]\,.}
Consider a point $\zeta \in \partial \DD_\lambda = \partial B_\rho^\lambda$. Then $|\nabla g_\lambda (\zeta )| = |D^2 g_\lambda (\tilde x ) (\zeta -\zeta_\lambda )| \ge \alpha \rho (\lambda -\lambda_0) $, for some $\alpha >0$, when $\la$ is close to $\la_0$. We also have $g_\lambda (\zeta) = O(\lambda-\lambda_0)$, as $\la \downarrow \la_0$. We conclude that for all $t\in (0,1)$, the function $\nabla g_\lambda + t\, o(1)\, g_\lambda $ does not have zeros on the boundary of this ball, provided that $\lambda -\lambda_0$ is small. In conclusion, its degree on the ball is constant along $t$. Since for $t=0$ is not zero, thanks to non-degeneracy of the critical point $\zeta_\lambda$, we conclude the existence of a zero of $\nabla p_\lambda (\zeta )$ inside $\DD_\lambda$. This concludes the proof.
\end{proof}

\section{The linear problem}\label{sec4}
Hereafter we will look for a solution of (\ref{problem2}) of the form $v=V+\phi$, so that $\phi$ solves the problem
\eq{\left\lbrace
\begin{array}{rcll}
L(\phi)&=&N(\phi)+E&\mbox{in }\Omega_\varepsilon,\\
\ddn{\phi}&=&0&\mbox{on }\partial \Omega_\varepsilon,
\end{array}\right.\label{ProbPhi}}
where
$$
L(\phi)= -\Delta \phi +\varepsilon^2\lambda \phi-5V^4\phi,\quad
N(\phi)= (V+\phi)^5-V^5-5V^4\phi,\quad
E= V^5-w_{\zeta',\mu'}^5.
$$
Here $V$ is defined as
$U_{\zeta,\mu}(x)=\frac{1}{\varepsilon^{1/2}}V\left(\frac{x}{\varepsilon}\right)$,
where $U_{\zeta , \mu}$ is given by \eqref{ansatz1}, while $\zeta' = \varepsilon^{-1} \zeta$, and $\mu'= \varepsilon^{-1} \mu$.

Let us recall that the only bounded solutions of the linear problem
\equ{\Delta z+5w_{\zeta',\mu'}^4z=0,\quad \mbox{in }\R^3}
are given by linear combinations of the functions
$$
z_i(x)=\frac{\partial w_{\zeta',\mu'} }{\partial \zeta_i'}(x),\quad i=1,2,3, \quad
z_4(x)=\frac{\partial w_{\zeta',\mu'} }{\partial \mu'}(x).
$$
In fact, the functions $z_i,\,i=1,2,3,4$ span the space of all bounded functions of the kernel of $L$ in the case $\varepsilon=0$. Observe also that
\equ{\int_{\R^3}z_jz_k=0,\mbox{if }j\neq k.}

Rather than solving (\ref{ProbPhi}) directly, we will look for a solution of the following problem first:
Find a function $\phi$ such that for certain numbers $c_i$,
\eq{\left\lbrace
\begin{array}{rcll}
L(\phi)&=&N(\phi)+E+\sum_{i=1}^{4}c_iw_{\zeta',\mu'}^{4}z_i&\mbox{in }\Omega_\varepsilon,\\
\ddn{\phi}&=&0&\mbox{on }\partial \Omega_\varepsilon,\\
\int_{\Omega_\varepsilon}w_{\zeta',\mu'}^4z_i\phi &=&0&\mbox{for }i=1,2,3,4.
\end{array}\right.\label{ProbPhi2}}

\medskip
\noindent
We next study the linear part of the problem (\ref{ProbPhi2}). Given a function $h$, we consider the linear problem of finding $\phi$ and numbers $c_i,\,i=1,2,3,4$ such that
\eq{\left\lbrace
\begin{array}{rcll}
L(\phi)&=&h+\sum_{i=1}^{4}c_iw_{\zeta',\mu'}^4z_i&\mbox{in }\Omega_\varepsilon,\\
\ddn{\phi}&=&0&\mbox{on }\partial \Omega_\varepsilon,\\
\int_{\Omega_\varepsilon}w_{\zeta',\mu'}^4z_i\phi &=&0&\mbox{for }i=1,2,3,4.
\end{array}\right.\label{ProbPhiLin}}
Given a fixed number $0<\sigma<1$ we define the following norms
\equ{\|f\|_*\colonequals \sup_{x\in \Omega_\varepsilon} (1+|x-\zeta'|^\sigma)|f(x)|,\quad \|f\|_{**}\colonequals \sup_{x\in \Omega_\varepsilon} (1+|x-\zeta'|^{2+\sigma})|f(x)|.}
\begin{prop}{\label{PropLin}} There exist positive numbers $\delta_0$, $\varepsilon_0$, $\alpha_0$, $\beta_0$ and a constant $C>0$ such that if
\begin{equation}
\mbox{\normalfont dist}(\zeta',\partial\Omega_\varepsilon)>\frac{\delta_0}{\varepsilon}\quad \mbox{and}\quad \alpha_0<\mu'<\beta_0, \label{constra}
\end{equation}
then for any $h\in C^{0,\alpha}(\Omega_\varepsilon)$ with $\| h\|_{**}< \infty$ and for all $\varepsilon <\varepsilon_0$, problem (\ref{ProbPhiLin}) admits a unique solution $\phi=T(h)\in C^{2,\alpha}(\Omega_\varepsilon)$. Besides,
\eq{\|T(h)\|_*\leq C\|h\|_{**}\quad \mbox{and}\quad|c_i|\leq C\|h\|_{**},\, i=1,2,3,4.\label{propp}}
\end{prop}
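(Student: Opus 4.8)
The plan is to prove Proposition \ref{PropLin} by the standard a priori estimate plus functional-analytic scheme used for linearized bubbling problems. The heart of the matter is the a priori bound: assuming $\phi$ solves (\ref{ProbPhiLin}) with the orthogonality conditions, one shows $\|\phi\|_* \le C\|h\|_{**}$ with $C$ independent of $\varepsilon$ and of $(\zeta',\mu')$ in the admissible range (\ref{constra}). I would argue by contradiction: suppose there are sequences $\varepsilon_n\to 0$, points $\zeta_n'$, scales $\mu_n'\in(\alpha_0,\beta_0)$, functions $h_n$ with $\|h_n\|_{**}\to 0$, and solutions $\phi_n$ with $\|\phi_n\|_*=1$. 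First, testing the equation against the $z_i$ and using $\int w^4 z_i z_j = c\,\delta_{ij}$ together with the smallness of $\varepsilon_n^2\lambda$ and the decay of $z_i$, one extracts $|c_i^{(n)}| \le C(\|h_n\|_{**} + o(1)\|\phi_n\|_*) \to 0$. Then, rescaling around $\zeta_n'$ (i.e. setting $\tilde\phi_n(y)=\phi_n(\zeta_n'+y)$) and using elliptic estimates on compact sets, $\tilde\phi_n$ converges locally uniformly to a bounded solution $\phi_\infty$ of $\Delta\phi_\infty + 5 w_{0,1}^4\phi_\infty = 0$ in $\R^3$; by the nondegeneracy result quoted before the proposition, $\phi_\infty$ is a linear combination of $z_1,\dots,z_4$, and passing the orthogonality conditions $\int w^4 z_i \phi_n = 0$ to the limit forces $\phi_\infty \equiv 0$.

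The contradiction is then closed by recovering the full weighted norm from the local convergence. Since $\|\phi_n\|_* = 1$ but $\phi_n \to 0$ uniformly on compacts of the rescaled picture, the norm must be achieved by the behavior away from $\zeta_n'$. Here one builds a barrier: the operator $L$ behaves, for $|x-\zeta_n'|$ large, like $-\Delta + \varepsilon_n^2\lambda$, and $5V^4 = O(|x-\zeta_n'|^{-4})$ decays fast; comparing $|\phi_n|$ with a multiple of $(1+|x-\zeta_n'|)^{-\sigma}$ (which is a supersolution of $-\Delta$ modulo lower order terms, using $0<\sigma<1$) plus a small multiple of the "slowly decaying" comparison function, and using $\|h_n\|_{**}\to 0$ and the Neumann condition, one gets $(1+|x-\zeta_n'|^\sigma)|\phi_n(x)| \le o(1) + C\sup_{|x-\zeta_n'|\le R}|\phi_n|$ for $R$ fixed large, and the right side tends to $0$. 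This contradicts $\|\phi_n\|_*=1$ and establishes the a priori estimate, which also immediately yields uniqueness.

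For existence (and the bound on the $c_i$), I would recast (\ref{ProbPhiLin}) in a variational/Fredholm framework. Let $H$ be the subspace of $H^1(\Omega_\varepsilon)$ (with the natural $\varepsilon^2\lambda$-weighted inner product, compatible with the Neumann condition) consisting of functions satisfying $\int_{\Omega_\varepsilon} w_{\zeta',\mu'}^4 z_i\phi = 0$ for $i=1,\dots,4$. Problem (\ref{ProbPhiLin}) is equivalent, via Riesz representation, to $\phi - K(\phi) = \tilde h$ in $H$ where $K$ is compact (the term $5V^4\phi$ gives a compact operator because $V^4$ decays and Rellich applies locally; one handles the unboundedness of $\Omega_\varepsilon$ by the decay/weight). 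By Fredholm's alternative, solvability for every right-hand side follows once one knows the homogeneous problem has only the trivial solution — but that is exactly what the a priori estimate gives (with $h=0$, $\|\phi\|_*\le 0$). The estimates (\ref{propp}) then follow from the a priori bound for $\phi$ and the earlier computation of the $c_i$.

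The main obstacle I anticipate is the barrier/comparison step controlling $\phi_n$ in the outer region: one must be careful that the chosen weight $(1+|x-\zeta'|)^{-\sigma}$ is genuinely a supersolution after absorbing the $\varepsilon^2\lambda$ and $5V^4$ terms, that the construction is uniform as $\zeta'$ ranges over a set whose distance to $\partial\Omega_\varepsilon$ blows up like $\delta_0/\varepsilon$, and that the Neumann boundary condition does not spoil the maximum principle (this is where \texttt{dist}$(\zeta',\partial\Omega_\varepsilon)>\delta_0/\varepsilon$ and reflection arguments near $\partial\Omega_\varepsilon$ are used). Everything else is by now routine and parallels the arguments in \cite{DMP} and related constructions; I would only sketch those parts and refer to the literature for the repetitive elliptic estimates.
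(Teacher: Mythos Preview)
Your outline is correct and matches the paper's strategy almost exactly: the contradiction argument, testing against the $z_i$ to kill the $c_i$, the blow-up at $\zeta_n'$ combined with nondegeneracy and the orthogonality conditions, the barrier $|x-\zeta_n'|^{-\sigma}$ in an intermediate annulus, and the Fredholm argument in the closed subspace $H\subset H^1(\Omega_\varepsilon)$ for existence. The one place where the paper does something different from what you sketch is the control of $\phi_n$ in the \emph{far} region, near $\partial\Omega_\varepsilon$. Rather than reflection, the paper scales back to the fixed domain: setting $\psi_n(x)=\varepsilon_n^{-\sigma}\phi_n(x/\varepsilon_n)$ for $x\in\Omega$, one gets (away from $\zeta=\lim\zeta_n$) a sequence converging to a bounded solution of $-\Delta\psi+\lambda\psi=0$ in $\Omega\setminus\{\zeta\}$ with $\partial_\nu\psi=0$ on $\partial\Omega$ and $|\psi(x)|\le|x-\zeta|^{-\sigma}$; the singularity is removable and hence $\psi\equiv 0$. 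This gives $|\phi_n(x)|=o(1)\varepsilon_n^\sigma$ for $|x-\zeta_n'|\ge R/(2\varepsilon_n)$, so the Neumann condition is absorbed automatically and the barrier is only needed on the annulus $M\le|x-\zeta_n'|\le R/(2\varepsilon_n)$, where no boundary is present. Your reflection idea would also work but this back-scaling is cleaner and avoids the issue you flagged as the main obstacle.
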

For the proof of Proposition \ref{PropLin} we will need the next
\begin{lema}{\label{lema1}}
Assume the existence of a sequences $(\mu_n')_{n\in\N}$, $(\zeta'_n)_{n\in\N}$, $(\varepsilon_n)_{n\in\N}$ such that $\alpha_0<\mu_n'<\beta_0$, $\mbox{\normalfont dist}(\zeta'_n,\partial\Omega_\varepsilon)>{\delta_0 \over \varepsilon_n}$, $\varepsilon_n\rightarrow 0$ and for certain functions $\phi_n$ and $h_n$ with $\|h_n\|_{**}\rightarrow 0$ and scalars $c_i^n,\,i=1,2,3,4,$ one has
\equ{\left\lbrace
\begin{array}{rcll}
L(\phi_n)&=&h_n+\sum_{i=1}^{4}c_i^nw_{\zeta'_n,\mu'_n}^4z_i^n&\mbox{in }\Omega_{\varepsilon_n},\\
\ddn{\phi_n}&=&0&\mbox{on }\partial \Omega_{\varepsilon_n},\\
\int_{\Omega_{\varepsilon_n}}w_{\zeta'_n,\mu'_n}^4z_i^n\phi_n &=&0&\mbox{for }i=1,2,3,4
\end{array}\right.}
where
\equ{z_i^n=\partial_{(\zeta_n')_i}w_{\zeta_n',\mu_n'},\, i=1,2,3,\quad z_4^n=\partial_{\mu_n}w_{\zeta_n',\mu_n'} }
then
\equ{\lim_{n\rightarrow \infty}\|\phi_n\|_{*}=0}
\end{lema}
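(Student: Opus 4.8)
The plan is to argue by contradiction via a blow-up (concentration-compactness) argument, which is the standard scheme for this type of a priori estimate in singular perturbation problems. Suppose, contrary to the conclusion, that along some subsequence $\|\phi_n\|_* \geq c > 0$; after normalizing we may assume $\|\phi_n\|_* = 1$. The idea is to identify the limiting profile of $\phi_n$, show it must be a bounded solution of the limiting linear equation satisfying the orthogonality conditions, hence zero, and then to derive a contradiction with $\|\phi_n\|_* = 1$ by upgrading the weak convergence to convergence in the weighted $L^\infty$ norm.

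\textbf{Step 1: Local limit near the concentration point.} Translate and rescale around $\zeta_n'$: set $\tilde\phi_n(y) = \phi_n(\zeta_n' + y)$ (the $\mu_n'$ are bounded away from $0$ and $\infty$, so no further rescaling is needed, only the translation). On bounded sets, $\|h_n\|_{**}\to 0$ forces the right-hand side to vanish, the term $\varepsilon_n^2\lambda\phi_n$ is $O(\varepsilon_n^2)$ in $L^\infty_{loc}$, and $\mbox{dist}(\zeta_n',\partial\Omega_{\varepsilon_n})\to\infty$ means the domains $\Omega_{\varepsilon_n}$ exhaust $\R^3$ near $\zeta_n'$. Also, passing to a subsequence, $\mu_n'\to\mu_*\in[\alpha_0,\beta_0]$ and (after a rotation) $w_{\zeta_n',\mu_n'}(\zeta_n'+\cdot)\to w_{0,\mu_*}$ with all derivatives locally uniformly. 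Since $\|\tilde\phi_n\|_{L^\infty}\le 1$ by the definition of $\|\cdot\|_*$, elliptic estimates (Schauder, using $h_n\in C^{0,\alpha}$) give local $C^{2,\alpha}$ bounds, so $\tilde\phi_n\to\phi_\infty$ in $C^2_{loc}(\R^3)$, where $\phi_\infty$ is a bounded solution of $\Delta\phi_\infty + 5w_{0,\mu_*}^4\phi_\infty = 0$ in $\R^3$. By the nondegeneracy result quoted in the excerpt, $\phi_\infty$ is a linear combination of the $z_i$; but the orthogonality conditions $\int w^4 z_i^n\phi_n = 0$ pass to the limit (using the decay $|\tilde\phi_n|\lesssim (1+|y|)^{-\sigma}$ to justify dominated convergence against the integrable weights $w_{0,\mu_*}^4 z_i$, which decay like $|y|^{-4}$ resp. $|y|^{-3}$) to yield $\int w_{0,\mu_*}^4 z_i\phi_\infty = 0$ for all $i$. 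Hence $\phi_\infty\equiv 0$.

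\textbf{Step 2: Global decay estimate and the contradiction.} It remains to show $\|\phi_n\|_*\to 0$, i.e.\ that the mass does not escape to infinity. Here one constructs a barrier: away from $|x-\zeta_n'|\le R$ (large fixed $R$), the potential $5V^4$ is small (it decays like $|x-\zeta_n'|^{-4}$), so the operator $-\Delta + \varepsilon_n^2\lambda - 5V^4$ satisfies a maximum principle there, and the function $|x-\zeta_n'|^{-\sigma}$ is a supersolution up to lower-order terms. Combining this with the bound $\|h_n\|_{**}\to 0$ and the smallness of $\|\phi_n\|_{L^\infty(B_R(\zeta_n'))}$ (which tends to $0$ by Step 1, since $\phi_\infty=0$), one gets $\sup_{x}(1+|x-\zeta_n'|^\sigma)|\phi_n(x)|\to 0$ via the comparison principle, contradicting $\|\phi_n\|_* = 1$. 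Finally, once $\|\phi_n\|_*\to 0$ is established, testing the equation against $z_j^n$ and using $\int z_j^n z_k^n = 0$ for $j\ne k$ together with $\int w^4|z_j^n|^2\gtrsim 1$ shows $c_j^n\to 0$ as well, though for this Lemma only the statement $\|\phi_n\|_*\to 0$ is asserted.

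\textbf{The main obstacle} I anticipate is Step 2, the global control: one must rule out the possibility that $\|\phi_n\|_*$ is achieved by a slowly decaying tail far from $\zeta_n'$, where Step 1 gives no information. The construction of an explicit supersolution of the form $c_1|x-\zeta_n'|^{-\sigma} + c_2\varepsilon_n^{\text{something}}$ adapted to the bounded domain $\Omega_{\varepsilon_n}$ with Neumann conditions, and verifying that the zero-order term $5V^4$ is genuinely negligible in the exterior region (which uses $\sigma<1$ so that $\Delta|x|^{-\sigma}$ has the right sign and order relative to $|x|^{-4}$), is the delicate part. The Neumann boundary condition requires some care near $\partial\Omega_{\varepsilon_n}$, but since $\pi_{\zeta,\mu}$ makes $\partial_\nu V = 0$ exactly and the boundary is far away, this is manageable with a reflection or a cutoff argument.
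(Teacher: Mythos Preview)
Your overall architecture---contradiction, normalize $\|\phi_n\|_*=1$, inner blow-up at the bubble scale, nondegeneracy plus orthogonality to kill the limit, then a barrier to rule out escape of mass---matches the paper's. Two points deserve comment.

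First, a small ordering issue: you postpone showing $c_i^n\to 0$ to the very end, but in Step~1 the right-hand side of the equation contains $\sum_i c_i^n w^4 z_i^n$, so you need at least boundedness of the $c_i^n$ to pass to a local limit, and then an extra argument to remove the limiting forcing term. The paper instead disposes of the $c_i^n$ at the very start, by testing the equation against $z_j^n$: the left side is $\int_{\Omega_{\varepsilon_n}} L(z_j^n)\phi_n + \text{boundary}=o(1)\|\phi_n\|_*$, and diagonal dominance of $\int w^4 z_i^n z_j^n$ gives $c_i^n\to 0$ directly, before any blow-up. This cleans up Step~1.

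Second, and more substantively, your Step~2 differs from the paper's in how the far region and the Neumann boundary are handled. You propose a single barrier $|x-\zeta_n'|^{-\sigma}$ on all of $\{|x-\zeta_n'|>R\}$, dealing with $\partial\Omega_{\varepsilon_n}$ ``by reflection or a cutoff''. The paper instead performs a \emph{second} blow-up, at the macroscopic scale: set $\psi_n(x)=\varepsilon_n^{-\sigma}\phi_n(x/\varepsilon_n)$ on the original domain $\Omega$. On compacts of $\bar\Omega\setminus\{\zeta\}$ the limit $\psi$ solves $-\Delta\psi+\lambda\psi=0$ in $\Omega\setminus\{\zeta\}$ with $\partial_\nu\psi=0$ on $\partial\Omega$ and a removable singularity at $\zeta$ (since $|\psi|\le|x-\zeta|^{-\sigma}$ with $\sigma<1$); because $\lambda>0$, this forces $\psi\equiv 0$. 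That yields $|\phi_n(x)|\le o(1)\varepsilon_n^\sigma$ for $|x-\zeta_n'|\ge R/(2\varepsilon_n)$, i.e.\ a clean Dirichlet-type bound on the \emph{outer} boundary of an annulus $M<|x-\zeta_n'|<R/(2\varepsilon_n)$. Only then is the barrier $|x-\zeta_n'|^{-\sigma}$ invoked, on this annulus, with both boundaries controlled (inner from Step~1, outer from the macroscopic limit), so the Neumann condition on $\partial\Omega_{\varepsilon_n}$ never enters the comparison argument. Your one-scale barrier could in principle be made to work, but you would genuinely have to produce a supersolution with $\partial_\nu\ge 0$ on $\partial\Omega_{\varepsilon_n}$, which $|x-\zeta_n'|^{-\sigma}$ does not satisfy; the paper's two-scale approach sidesteps this entirely and is where the hypothesis $\lambda>0$ is actually used.
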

\begin{proof}By contradiction, we may assume that $\|\phi_n\|_*=1$.
We will proof first the weaker assertion that
\equ{\lim_{n\rightarrow \infty}\|\phi_n\|_\infty=0.}
Also, by contradiction, we may assume up to a subsequence that $\lim_{n\rightarrow \infty}\|\phi_n\|_\infty=\gamma$, where $0<\gamma\leq 1$. Let us see that
\equ{\lim_{n\rightarrow \infty}c_i^n=0,\,i=1,2,3,4.}
Up to subsequence, we can suppose that $\mu_n'\rightarrow \mu'$,
where $\alpha_0\leq \mu' \leq \beta_0$.
Testing the above equation against $z_j^n(x)$ and integrating by parts twice we get the relation
\equ{\int_{\Omega_{\varepsilon_n}}L(z_j^n)\phi_n+\int_{\partial\Omega_{\varepsilon_n}} \ddn{z_j^n}\phi_n=\int_{\Omega_{\varepsilon_n}}h_nz_j^n+\sum_{i=1}^{4}c_i^n\int_{\Omega_{\varepsilon_n}}w_{\zeta'_n,\mu'_n}^4z_i^nz_j^n.}
Observe that
\begin{align*}
\left| \int_{\Omega_{\varepsilon_n}}L(z_j^n)\phi_n+\int_{\partial\Omega_{\varepsilon_n}} \ddn{z_j^n}\phi_n-\int_{\Omega_{\varepsilon_n}}h_nz_j^n\right| &\leq C\|h_n\|_*+o(1)\|\phi_n\|_*, \\
\int_{\Omega_{\varepsilon_n}}w_{\zeta_n',\mu'_n}^4z_i^nz_j^n&=C\delta_{i,j}+o(1).
\end{align*}
Hence as $n\rightarrow \infty$, $c_i^n\rightarrow 0,\,i=1,2,3,4.$

Let $x_n\in\Omega_{\varepsilon_n}$ be such that $\sup_{x\in\Omega_{\varepsilon_n}}\phi_n(x)=\phi_n(x_n)$, so that $\phi_n$ maximizes at this point. We claim that there exists $R>0$ such that
\equ{|x_n-\zeta_n'|\leq R,\,\forall n\in\N.}
This fact follows immediately from the assumption $\|\phi_n\|_*=1$. We define $\tilde{\phi}_n(x)=\phi(x+\zeta'_n)$ Hence, up to subsequence, $\tilde{\phi}_n$ converges uniformly over compacts of $\R^3$ to a nontrivial bounded solution of
\equ{\left\lbrace
\begin{array}{rcll}
-\Delta \tilde{\phi}-5w_{0,\mu'}^4\tilde{\phi}&=&0&\mbox{in }\R^3,\\
\int_{\R^3}w_{0,\mu'}^4z_i\tilde{\phi} &=&0&\mbox{for }i=1,2,3,4
\end{array}\right.}
where $z_i$ is defined in terms of $\mu'$ and $\zeta'=0$.
Then $\tilde{\phi}=\sum_{i=1}^4\alpha_i z_i(x)$. From the orthogonality conditions $\int_{\R^3}w_{0,\mu'}^4z_i\tilde{\phi}=0,\,i=1,2,3,4$, we deduce that $\alpha_i=0,\,i=1,2,3,4$. This implies that $\tilde{\phi}=0$, which is a contradiction with the hypothesis $\lim_{n\rightarrow \infty}\|\phi_n\|_\infty=\gamma>0$.

\medskip

Now we prove the stronger result: $\lim_{n\rightarrow \infty}\|\phi_n\|_*=0$. Let us observe that $\zeta_n$ is a bounded sequence, so $\zeta_n\rightarrow \zeta$, as $n\rightarrow \infty$, up to subsequence. Let $R>0$ be a fixed number. Without loss of generality we can assume that $|\zeta_n-\zeta|\leq R/2$, for all $n\in \N$ and $B(\zeta,R)\subseteq \Omega$. We define $\psi_n(x)=\frac{1}{\varepsilon_n^\sigma}\phi_n\left(\frac{x}{\varepsilon_n}\right),\, x\in \Omega$ (here we suppose without loss of generality that $\mu_n>0,\,\forall n\in \N$). From the assumption $\lim_{n\rightarrow \infty}\|\phi_n\|_*=1$ we deduce that
\equ{|\psi_n(x)|\leq \frac{1}{|x-\zeta_n|^\sigma}, \mbox{ for }x\in B(\zeta,R).}
Also, $\psi_n(x)$ solves the problem
\equ{\left\lbrace
\begin{array}{rcll}
-\Delta \psi_n+\lambda \psi_n&=& \varepsilon_n^{-(2+\sigma)}\{5(\varepsilon_n^{1/2}U_{\zeta_n,\mu_n})^4\psi+g_n+\sum_{i=1}^{4}c_i^n \varepsilon_n^2w^4_{\zeta_n,\mu_n}Z_i^n\}&\mbox{in }\Omega,\\
\ddn{\psi_n}&=&0&\mbox{on }\partial \Omega, 
\end{array}\right.}
where $g_n(x)=h_n\left(\frac{x}{\varepsilon_n}\right)$ and $Z_i^n(x)=z_i^n\left(\frac{x}{\varepsilon_n}\right)$. Since $\lim_{n\rightarrow \infty} \|h_n\|_{**}=0$, we know that
\equ{|g_n(x)|\leq o(1)\frac{\varepsilon_n^{2+\sigma}}{\varepsilon_n^{2+\sigma}+|x-\zeta_n|^{2+\sigma}},\mbox{ for }x\in \Omega.}
Also, by (\ref{Green}), we see that
\eq{(\varepsilon_n^{1/2}U_{\zeta_n,\mu_n}(x))^4=C\varepsilon_n^4(1+o(1))G(x,\zeta_n)\label{Green2}}
away from $\zeta_n$.
It's easy to see that $\varepsilon_n^{-\sigma}\sum_{i=1}^4c_i^n w_{\zeta_n,\mu_n}^4Z_i=o(1)$ as $\varepsilon_n\rightarrow 0$, away from $\zeta_n$.
We conclude (by a diagonal convergence method) that $\psi_n(x)$ converges uniformly over compacts of $\overline{\Omega}\setminus \{\zeta\}$ to $\psi(x)$, a bounded solution of
$$
-\Delta \psi+\lambda \psi= 0 \quad \mbox{in }\Omega\setminus \{\zeta\},\quad
\ddn{\psi}=0, \quad \mbox{on }\partial \Omega,
$$
such that $|\psi(x)|\leq \frac{1}{|x-\zeta|^\sigma}$ in $B(\zeta,R)$. So $\psi$ has a removable singularity at $\zeta$, and we conclude that $\psi(x)=0$. This implies that over compacts of $\overline{\Omega}\setminus\{\zeta\}$, we have
\equ{|\psi_n(x)|=o(1)\varepsilon_n^\sigma.}
In particular, we conclude that for all $x \in\Omega \setminus {B(\zeta_n,R/2})$ we have $|\psi_n(x)|\leq o(1)\varepsilon_n^\sigma$, which traduces into the following for $\phi_n$
\eq{|\phi_n(x)|\leq o(1)\varepsilon_n^\sigma,\mbox{ for all }x\in \Omega_{\varepsilon_n}\setminus B(\zeta_n',R/2\varepsilon_n) \label{des1}.}
Consider a fixed number $M$, such that $M< R/2\varepsilon_n$, for all $n$. Observe that $\|\phi_n\|_\infty=o(1)$, so
\eq{(1+|x|^\sigma)|\phi_n(x)|\leq o(1),\mbox{ for all }x\in \overline{B(\zeta_n',M)}\label{des2}.}
We claim that
\eq{(1+|x|^\sigma) |\phi_n(x)|\leq o(1),\mbox{ for all }x\in A_{\varepsilon_n,M},\label{des3}}
where $A_{\varepsilon_n,M}=B(\zeta'_n,R/2\varepsilon_n)\setminus \overline{B(\zeta'_n,M)}$. This assertion follows from the fact that the operator $L$ satisfies the weak maximum principle in $A_{\varepsilon_n,M}$ (choosing a larger M and a subsequence if necessary): If $u$ satisfies $L(u)\leq 0$ in $A_{\varepsilon_n,M}$ and $u\leq 0$ in $\partial A_{\varepsilon_n,M}$, then $u\leq 0$ in $A_{\varepsilon_n,M}$. This result is just a consequence of the fact that $L(|x-\zeta'_n|^{-\sigma})\geq 0$ in $A_{\varepsilon_n,M}$, if $M$ is larger enough but independent of $n$.

\medskip

We now prove (\ref{des3}) with the use of a suitable barrier. Observe that from (\ref{des1}) we deduce the existence of $\eta_n^1\rightarrow 0$, as $n\rightarrow 0$ such that $\varepsilon_n^{-\sigma}|\phi_n(x)|\leq \eta^1_n$, for all $x$ such that $|x|=R/2\varepsilon_n$. From (\ref{des2}) we deduce the existence of $\eta_n^2\rightarrow 0$, as $n\rightarrow \infty$ such that $M^\sigma|\phi_n(x)|\leq \eta_n^2$, for all $x$ such that $|x|=M$. Also, there exists $\eta_n^3\rightarrow 0$, as $n\rightarrow \infty$ such that
\equ{|x+\zeta'_n|^{2+\sigma}|L(\phi_n)|\leq \eta_n^3, \mbox{in } A_{\varepsilon_n,M}.}
We define the barrier function $\varphi_n(x)=\eta_n\frac{1}{|x-\zeta_n'|^{\sigma}}$, with $\eta_n=\max\{\eta^1_n,\eta^2_n,\eta^3_n\}$. Observe that $L(\varphi_n)=\sigma(1-\sigma)\eta_n\frac{1}{|x-\zeta_n'|^{2+\sigma}}+(\varepsilon^2_n\lambda-5V^4)\eta_n\frac{1}{|x-\zeta'_n|^{\sigma}}$. It's not hard to see that $|L(\phi_n)|\leq C L(\varphi_n)$ in $A_{\mu_n,M}$ and $|\phi_n(x)|\leq C\varphi_n$ in $\partial A_{\varepsilon_n,M}$, where $C$ is a constant independent of $n$. From the weak maximum principle we deduce (\ref{des3}) and the fact $\|\phi_n\|_{\infty}=o(1)$. From (\ref{des1}), (\ref{des2}), (\ref{des3}), and $\|\phi_n\|_{\infty}=o(1)$ we conclude that
$\|\phi_n\|_*=o(1) $
which is a contradiction with the assumption $\|\phi_n\|_*=1$. The proof of Lemma (\ref{lema1}) is completed.
\end{proof}
\begin{proof}[Proof of proposition \ref{PropLin}]
Let us consider the space
\equ{H=\left\lbrace  \phi\in H^1(\Omega)\, \vline \, \int_{\Omega_\varepsilon}w_{\zeta',\mu'}^4z_i\phi=0,\, i=1,2,3,4.  \right \rbrace}
endowed with the inner product,
$[\phi,\psi]=\int_{\Omega_\varepsilon}\nabla \phi \nabla \psi +\varepsilon^2\lambda\int_{\Omega_\varepsilon}\phi\psi.$
Problem (\ref{ProbPhiLin}) expressed in the weak form is equivalent to that of finding $\phi\in H$ such that
\equ{[\phi,\psi]=\int_{\Omega_\varepsilon}\left[5V^4\phi+h+\sum_{i=1}^4 c_i w_{\zeta',\mu'}^4z_i\right]\psi,\quad \mbox{for all }\psi\in H.}
The a priori estimate $\|T(h)\|_{*}\leq C\|h\|_{**}$ implies that for $h\equiv 0$ the only solution is $0$. With the aid of Riesz's representation theorem, this equation gets rewritten in $H$ in operational form as one in which Fredholm's alternative is applicable, and its unique solvability thus follows. Besides, it is easy to conclude (\ref{propp}) from an application of Lemma (\ref{lema1}).
\end{proof}
It is important, for later purposes, to understand the differentiability of the operator $T:h\rightarrow \phi$, with respect to the variables $\mu'$ and $\zeta'$, for a fixed $\varepsilon$ (we only let $\mu$ and $\zeta$ to vary). We have the following result
\begin{prop}{\label{prop2}}
Under the conditions of Proposition \ref{PropLin}, the map $T$ is of class $C^1$ and the derivative $\nabla_{\zeta',\mu'} \partial_{\mu'}T$ exists and is a continuous function. Besides, we have
\equ{\|\nabla_{\zeta',\mu'}T(h)\|_*+\|\nabla_{\zeta',\mu'}\partial_{\mu'} T(h)\|_*+\leq C\|h\|_{**}.}
\end{prop}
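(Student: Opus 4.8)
The plan is to differentiate the identity defining $\phi=T(h)$ with respect to the parameters, reorganize the resulting problem so that it becomes one more instance of the linear problem (\ref{ProbPhiLin}) solved in Proposition~\ref{PropLin}, and then close the estimates by the a priori bound of that Proposition. Fix $p\in\{\zeta_1',\zeta_2',\zeta_3',\mu'\}$ and suppose for the moment that $\psi:=\partial_p\phi$ and $\partial_pc_i$ exist. Using $\partial_p\big(L(\phi)\big)=L(\partial_p\phi)-5\,\partial_p(V^4)\,\phi$ and differentiating $L(\phi)=h+\sum_ic_iw_{\zeta',\mu'}^4z_i$ in $p$ gives
\[
L(\psi)=5\,\partial_p(V^4)\,\phi+\sum_ic_i\,\partial_p\!\big(w_{\zeta',\mu'}^4z_i\big)+\sum_i(\partial_pc_i)\,w_{\zeta',\mu'}^4z_i ,
\]
while differentiating the orthogonality conditions yields $\int_{\Omega_\varepsilon}w_{\zeta',\mu'}^4z_i\,\psi=-\int_{\Omega_\varepsilon}\partial_p(w_{\zeta',\mu'}^4z_i)\,\phi=:e_i$, which is not zero. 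Since the Gram matrix $\big(\int_{\Omega_\varepsilon}w_{\zeta',\mu'}^4z_iz_j\big)_{ij}=C\delta_{ij}+o(1)$ is invertible (exactly as in the proof of Lemma~\ref{lema1}), there are unique scalars $b_j$ with $|b_j|\le C\sum_i|e_i|\le C\|h\|_{**}$ (using $\|\phi\|_*\le C\|h\|_{**}$ and the decay of $\partial_p(w_{\zeta',\mu'}^4z_i)$) such that $\hat\psi:=\psi-\sum_jb_jz_j$ satisfies all four orthogonality conditions. Since $\Delta z_j+5w_{\zeta',\mu'}^4z_j=0$, one has $L(z_j)=5\big(w_{\zeta',\mu'}^4-V^4\big)z_j+\varepsilon^2\lambda z_j$, and therefore $\hat\psi$ solves a problem of the form (\ref{ProbPhiLin}),
\[
L(\hat\psi)=\tilde h_p+\sum_i(\partial_pc_i)\,w_{\zeta',\mu'}^4z_i,\qquad \partial_\nu\hat\psi=0,\qquad \int_{\Omega_\varepsilon}w_{\zeta',\mu'}^4z_i\,\hat\psi=0,
\]
with $\tilde h_p:=5\,\partial_p(V^4)\,\phi+\sum_ic_i\,\partial_p(w_{\zeta',\mu'}^4z_i)-\sum_jb_j\big[5(w_{\zeta',\mu'}^4-V^4)z_j+\varepsilon^2\lambda z_j\big]$, the numbers $\partial_pc_i$ now playing the role of the Lagrange multipliers.

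The next step is to check $\|\tilde h_p\|_{**}\le C\|h\|_{**}$, uniformly in $\varepsilon$ and in the parameters in range (\ref{constra}). For the first term one uses $|\partial_p(V^4)|\le C\,(1+|x-\zeta'|)^{-4}$ and $|\phi|\le\|\phi\|_*(1+|x-\zeta'|)^{-\sigma}$, so $(1+|x-\zeta'|)^{2+\sigma}|\partial_p(V^4)\phi|\le C\|\phi\|_*$; for the second, $|c_i|\le C\|h\|_{**}$ while $\partial_p(w_{\zeta',\mu'}^4z_i)$ has uniformly bounded $**$-norm; for the last, $|b_j|\le C\|h\|_{**}$, the factor $w_{\zeta',\mu'}^4-V^4$ is small of the size of $\pi_{\zeta,\mu}$ after rescaling (controlled through Lemma~\ref{lemaPi}), and $\|\varepsilon^2\lambda z_j\|_{**}=O(\varepsilon^{2-\sigma})$ on $\Omega_\varepsilon$. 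Granting this, Proposition~\ref{PropLin} applied to $\tilde h_p$ produces a unique $\hat\psi=T(\tilde h_p)$ with $\|\hat\psi\|_*\le C\|\tilde h_p\|_{**}\le C\|h\|_{**}$ and $|\partial_pc_i|\le C\|h\|_{**}$; since $\|z_j\|_*\le C$, this yields $\|\partial_p\phi\|_*\le\|\hat\psi\|_*+\sum_j|b_j|\|z_j\|_*\le C\|h\|_{**}$, the first half of the asserted bound. To make the computation rigorous one replaces $\partial_p$ by difference quotients: $\Delta_s^p\phi:=s^{-1}\big(T(h)(\dots,p+s,\dots)-T(h)(\dots,p,\dots)\big)$ solves a problem of type (\ref{ProbPhiLin}) whose right-hand side converges in $\|\cdot\|_{**}$, as $s\to0$, to the expression above; this uses the continuity and uniform parameter-estimates of $V$, $w_{\zeta',\mu'}^4z_i$ and $\pi_{\zeta,\mu}$, the last being precisely the content of Lemma~\ref{lemaPi} (together with the smoothness of $H_\lambda$ from Lemma~\ref{primerlema}). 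Hence $\Delta_s^p\phi\to\partial_p\phi$ in $\|\cdot\|_*$, proving that $T$ is of class $C^1$ and that the $c_i$ are $C^1$ in $(\zeta',\mu')$; continuous dependence of the derivatives on $(\zeta',\mu')$ follows by the same argument applied to $\partial_p\phi$.

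For the second derivative one iterates this scheme: differentiating the problem satisfied by $\hat\psi=\partial_{\mu'}\phi-\sum_jb_jz_j$ once more with respect to any $q\in\{\zeta_1',\zeta_2',\zeta_3',\mu'\}$ and again projecting onto the orthogonal complement of $\mathrm{span}\{z_1,\dots,z_4\}$, one gets a problem of the form (\ref{ProbPhiLin}) whose right-hand side now involves $\partial^2_{pq}(V^4)\,\phi$, $\partial_q(V^4)\,\partial_p\phi$, the already-controlled quantities $\partial_pc_i$, $b_j$ and their $q$-derivatives, and second $\zeta',\mu'$-derivatives of $w_{\zeta',\mu'}^4z_i$; all of these have $\|\cdot\|_{**}$-norm bounded by $C\|h\|_{**}$, using the first-order bounds just obtained together with the estimates of Lemma~\ref{lemaPi} for $\mu^j\partial^{i+j}_{\zeta^i\mu^j}$ with $i+j\le2$. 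Proposition~\ref{PropLin} then gives the existence of $\nabla_{\zeta',\mu'}\partial_{\mu'}T(h)$ together with $\|\nabla_{\zeta',\mu'}\partial_{\mu'}T(h)\|_*\le C\|h\|_{**}$, and its continuity in $(\zeta',\mu')$ follows from the difference-quotient argument as before. I expect the only genuinely delicate point — as opposed to a deep difficulty — to be the uniform (in $\varepsilon$ and in $(\zeta',\mu')$) bookkeeping of the weighted decay of all the terms generated by differentiation, in particular the correction terms $b_jL(z_j)$ on the expanding domain $\Omega_\varepsilon$, and checking that Lemma~\ref{lemaPi} provides exactly the amount of parameter-regularity of $\pi_{\zeta,\mu}$ needed to run both differentiations.
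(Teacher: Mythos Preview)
Your proposal is correct and follows essentially the same route as the paper: differentiate the defining system formally, subtract a linear combination $\sum_j b_j z_j$ chosen via the (almost diagonal) Gram matrix to restore the orthogonality conditions, recognize the resulting problem for $\hat\psi$ as another instance of (\ref{ProbPhiLin}), and apply Proposition~\ref{PropLin}; then iterate for the second derivative. If anything, you are more explicit than the paper in two places --- you compute $L(z_j)=5(w_{\zeta',\mu'}^4-V^4)z_j+\varepsilon^2\lambda z_j$ and you spell out the difference-quotient justification --- whereas the paper absorbs $L(z_j)$ directly into $f$ and simply asserts that the formal computation is legitimate. One minor correction: for $j=4$ one has $z_4\sim|x-\zeta'|^{-1}$ at infinity, so on $\Omega_\varepsilon$ the bound is $\|\varepsilon^2\lambda z_4\|_{**}=O(\varepsilon^{1-\sigma})$ rather than $O(\varepsilon^{2-\sigma})$; this does not affect the argument since only smallness is used.
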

\begin{proof}
Let us consider differentiation with respect to the variable $\zeta'_k$, $k=1,2,3$. For notational simplicity we write $\frac{\partial}{\partial \zeta'_{k}}=\partial_{\zeta'_k}$. Let us set, still formally, $X_k=\partial_{\zeta'_k}\phi$. Observe that $X_k$ satisfies the following equation
\equ{L(X_k)=5\partial_{\zeta'_k}(V^4)\phi+\sum_{i=1}^4 d_i^k w_{\zeta',\mu'}^4 z_i +\sum_{i=1}^4 c_i \partial_{\zeta'_k}(w_{\zeta',\mu'}^4z_i),\quad \mbox{in }\Omega_\varepsilon.}
Here $d_i^k=\partial_{\zeta'_k}c_i$, $i=1,2,3$. Besides, from differentiating the orthogonality conditions $\int_{\Omega_{\varepsilon}} w_{\zeta',\mu'}^4z_i=0$, $i=1,2,3,4$, we further obtain the relations
\equ{\int_{\Omega_\varepsilon}X_k w_{\zeta',\mu'}^4z_i=-\int_{\Omega_\varepsilon}\phi \partial_{\zeta'_k}(w_{\zeta',\mu'}^4z_i),\quad i=1,2,3,4.}
Let us consider constants $b_i$, $i=1,2,3,4$, such that
\equ{\int_{\Omega_\varepsilon}\left(X_k-\sum_{i=1}^4 b_i z_i\right)w_{\zeta',\mu'}^4z_j=0,\quad j=1,2,3,4.}
These relations amount to
\equ{\sum_{i=1}^4 b_i\int_{\Omega_\varepsilon}w_{\zeta',\mu'}z_iz_j=\int_{\Omega_\varepsilon} \phi \partial_{\zeta'_k}(w_{\zeta',\mu'}^4z_j),\quad j=1,2,3,4.}
Since this system is diagonal dominant with uniformly bounded coefficients, we see that it is uniquely solvable and that
\equ{b_i=O(\|\phi\|_*)}
uniformly on $\zeta'$, $\mu'$ in the considered region. Also, it is not hard to see that
\equ{\|\phi \partial_{\zeta'_k}(V^4)\|_{**}\leq C\|\phi\|_*.}
From Proposition (\ref{propp}), we conclude
\equ{\left\|\sum_{i=1}^4c_i\partial_{\zeta'_k}(w_{\zeta',\mu'}^4z_i)\right\|_{**}\leq C \|h\|_{**}.}
We set $X=X_k-\sum_{i=1}^4 b_iz_i$, so $X$ satisfies
\equ{L(X)=f+\sum_{i=1}^4 b_i^k w_{\zeta',\mu'}^4 z_i,\quad \mbox{in }\Omega_\varepsilon,}
where
\equ{f=5\partial_{\zeta'_k}(V^4)\phi \sum_{i=1}^4 b_i L(z_i)+\sum_{i=1}^4c_i\partial_{\zeta',\mu'}(w_{\zeta',\mu'}^4z_i)}
Observe that also,
\equ{\int_{\Omega_\varepsilon} Xw_{\zeta',\mu'}^4z_i=0,\quad i=1,2,3,4.}
This computation is not just formal. Indeed, one gets, as arguing directly by definition shows,
$$\partial_{\xi'_k}\phi=\sum_{i=1}^4b_iz_i+T(f), \quad
{\mbox {and}} \quad
\|\partial_{\xi'_k}\phi\|_{*}\leq C\|h\|_{**}.
$$
The corresponding result for differentiation with respect to $\mu'$ follows similarly. This concludes the proof.
\end{proof}

\section{The nonlinear problem}\label{sec5}
We recall that our goal is to solve the problem (\ref{ProbPhi}). Rather than doing so directly, we shall solve first the intermediate nonlinear problem (\ref{ProbPhi2}) using the theory developed in the previous section. We have the next result

\begin{lema}{\label{lema2}}
Under the assumptions of Proposition \ref{PropLin}, there exist numbers $\varepsilon_1>0$, $C_1>0$, such that for all $\varepsilon \in (0,\varepsilon_1)$ problem (\ref{ProbPhi2}) has a unique solution $\phi$ which satisfies
\equ{\|\phi\|_*\leq C_1 \varepsilon.}
\end{lema}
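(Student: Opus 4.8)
The plan is to solve the fixed point problem \eqref{ProbPhi2} by writing it as $\phi = A(\phi) := T(N(\phi) + E)$, where $T$ is the linear solution operator from Proposition \ref{PropLin}, and to show that $A$ is a contraction on a small ball of the weighted space $\mathcal{F} = \{\phi \in C^{0,\alpha}(\Omega_\varepsilon) : \|\phi\|_* \le C_1 \varepsilon\}$. The crucial preliminary estimate is a bound on the error term $E = V^5 - w_{\zeta',\mu'}^5$ in the $\|\cdot\|_{**}$-norm. Since $V = w_{\zeta',\mu'} + \varepsilon^{-1/2}\pi_{\zeta,\mu}(\varepsilon \cdot)$ (translating the relation $U_{\zeta,\mu}(x) = \varepsilon^{-1/2} V(x/\varepsilon)$ through \eqref{ansatz1}), the mean value theorem gives $|E| \le C w_{\zeta',\mu'}^4 |\pi'| + C|\pi'|^5$ where $\pi'(x) = \varepsilon^{-1/2}\pi_{\zeta,\mu}(\varepsilon x)$. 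Using Lemma \ref{lemaPi}, which tells us $\mu^{-1/2}\pi_{\zeta,\mu} = -4\pi 3^{1/4} H_\lambda(\zeta,\cdot) + O(\mu)$ is bounded, and the scaling $\mu' = \varepsilon^{-1}\mu$, one finds $|\pi'| = O(\varepsilon)$ uniformly, and a direct computation against the weight $(1+|x-\zeta'|^{2+\sigma})$ yields $\|E\|_{**} \le C\varepsilon$ for $0 < \sigma < 1$. This is essentially the content of the energy-type expansions already carried out and should present no new difficulty.

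Next I would record the standard quadratic estimates for the nonlinear term $N(\phi) = (V+\phi)^5 - V^5 - 5V^4\phi$. Since the exponent $5$ is an odd integer there is no loss from non-smoothness of $t \mapsto t^5$; one has pointwise $|N(\phi)| \le C(V^3\phi^2 + \phi^5)$ and $|N(\phi_1) - N(\phi_2)| \le C(V^3(|\phi_1| + |\phi_2|) + |\phi_1|^4 + |\phi_2|^4)|\phi_1 - \phi_2|$. Translating these into the weighted norms, and using $\|V^3 \phi^2\|_{**} \le C\|\phi\|_*^2$ together with $\|\phi^5\|_{**} \le C\|\phi\|_*^5$ (here one checks the weight exponents: $\sigma < 1$ guarantees the relevant integrals/suprema behave), we obtain $\|N(\phi)\|_{**} \le C\|\phi\|_*^2$ on the unit ball and a matching Lipschitz bound $\|N(\phi_1) - N(\phi_2)\|_{**} \le C(\|\phi_1\|_* + \|\phi_2\|_*)\|\phi_1 - \phi_2\|_*$.

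Combining these with the a priori bound $\|T(h)\|_* \le C\|h\|_{**}$ from Proposition \ref{PropLin}, for $\phi$ in the ball $\|\phi\|_* \le C_1 \varepsilon$ we get
\equ{\|A(\phi)\|_* \le C\big(\|N(\phi)\|_{**} + \|E\|_{**}\big) \le C\big(C_1^2 \varepsilon^2 + \varepsilon\big) \le C_1 \varepsilon}
once $C_1$ is fixed large enough (absorbing the constant from $\|E\|_{**}$) and $\varepsilon < \varepsilon_1$ is small; and for two elements of the ball,
\equ{\|A(\phi_1) - A(\phi_2)\|_* \le C\|N(\phi_1) - N(\phi_2)\|_{**} \le C C_1 \varepsilon \|\phi_1 - \phi_2\|_* \le \tfrac12 \|\phi_1 - \phi_2\|_*}
for $\varepsilon$ small. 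Hence $A$ is a contraction of the ball into itself, and the Banach fixed point theorem produces the unique solution $\phi$ with $\|\phi\|_* \le C_1\varepsilon$; elliptic regularity upgrades it to $C^{2,\alpha}$, and the constants $c_i$ are then determined and satisfy $|c_i| \le C\|N(\phi) + E\|_{**} \le C\varepsilon$ by \eqref{propp}.

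The main obstacle is not any single estimate but the bookkeeping of the weighted norms: one must verify that with the chosen range $0 < \sigma < 1$ the weight exponents $\sigma$ in $\|\cdot\|_*$ and $2+\sigma$ in $\|\cdot\|_{**}$ are consistent with all three ingredients simultaneously — the error estimate $\|E\|_{**} = O(\varepsilon)$, the mapping property $\|N(\phi)\|_{**} \lesssim \|\phi\|_*^2$, and the output of $T$ living in the $\|\cdot\|_*$ ball. In particular the decay $w_{\zeta',\mu'} \sim |x-\zeta'|^{-1}$ at infinity must be fast enough that $w^3\phi^2$ and $w^4|\pi'|$ decay faster than $|x-\zeta'|^{-(2+\sigma)}$, which is where $\sigma < 1$ is used; and the interaction between the bubble scale and the domain scale $\Omega_\varepsilon = \varepsilon^{-1}\Omega$ must be controlled near $\partial\Omega_\varepsilon$, which is exactly what the constraint \eqref{constra} provides. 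Once these compatibilities are checked the contraction argument is routine.
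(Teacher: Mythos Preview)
Your proposal is correct and follows essentially the same contraction-mapping argument as the paper: reformulate \eqref{ProbPhi2} as $\phi = T(N(\phi)+E)$, estimate $\|E\|_{**}=O(\varepsilon)$ via Lemma~\ref{lemaPi}, use the quadratic bound $\|N(\phi)\|_{**}\le C\|\phi\|_*^2$, and close by Banach's fixed point theorem on a ball of radius $O(\varepsilon)$. One small slip: from $U_{\zeta,\mu}(x)=\varepsilon^{-1/2}V(x/\varepsilon)$ and \eqref{ansatz1} you get $V=w_{\zeta',\mu'}+\varepsilon^{1/2}\pi_{\zeta,\mu}(\varepsilon\,\cdot)$, not $\varepsilon^{-1/2}$, but with this correction your size estimate $|\pi'|=O(\varepsilon)$ and the rest of the argument go through unchanged.
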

\begin{proof}
First we assume that  $\mu$ and $\zeta$ are  such that $\|E\|_{**}<\varepsilon_1$.
In terms of the operator $T$ defined in Proposition (\ref{PropLin}), problem (\ref{ProbPhi2}) becomes
\equ{\phi=T(N(\phi)+E)\equiv A(\phi).}
For a given $\gamma>0$, let us consider the region
$\mathcal{F}_\gamma\colonequals \{\phi \in C(\overline{\Omega}_{\varepsilon}) \, \vline \ \|\phi\|_*\leq \gamma \|E\|_{**}  \}.$
From Proposition (\ref{PropLin}), we get
\equ{\|A(\phi)\|_*\leq C\left[ \|N(\phi)\|_{**}+\|E\|_{**}   \right].}
The definition of $N$ immediately yields $\|N(\phi)\|_{**}\leq C_0 \|\phi\|_{*}^2$. It is also easily checked that $N$ satisfies, for $\phi_1, \phi_2\in \mathcal{F}_\gamma$,
\equ{\|N(\phi_1)-N(\phi_2)\|_{**}\leq C_0\gamma \|E\|_{**}\|\phi_1-\phi_2\|_*.}
Hence for a constant $C_1$ depending on $C_0$, $C$, we get
$$
\|A(\phi)\|_*\leq C_1 \left[\gamma^2\|E\|_{**}+1 \right]\|E\|_{**} , \quad
\|A(\phi_1)-A(\phi_2)\|_*\leq C_1\gamma \|E\|_{**}\|\phi_1-\phi_2\|_{*}.
$$
Choosing
$\gamma=C_1,\quad \varepsilon_1=\frac{1}{2C_1^2},$
we conclude that $A$ is a contraction mapping of $\mathcal{F}_\gamma$, and therefore a unique fixed point of $A$ exists in this region.

\medskip
Assume now that $\mu'$ and $\zeta'$ satisfy conditions \eqref{constra}.
Recall that the error introduced by our first approximation is
\equ{
	E=V^5-w_{\zeta',\mu'}^5=(w_{\mu',\xi'}(y)+\sqrt{\varepsilon}\pi(\varepsilon y))^5-w_{\zeta',\mu'}^5(y),\quad y\in \Omega_\varepsilon.
	}
Using several times estimate \eqref{expansionpi}, we get
\equ{
\|E\|_{**}=O\left(\|\sqrt{\varepsilon}\pi(\varepsilon y)w_{\zeta',\mu'}(y)^4 \|_{**} \right)=
O\left(\left\|\varepsilon \frac{\mu'^2}{(\mu'^2+|y-\zeta'|^2)^2} \right\|_{**} \right)=O(\varepsilon),
	}
as $\varepsilon \to 0$.
This concludes the proof of the Lemma.
\end{proof}
We shall next analyze the differentiability of the map $(\zeta',\mu')\rightarrow \phi$.

We start computing the $\| \cdot \|_{**}$-norm of the partial derivatives of $E$ with respect to $\mu'$ and $\zeta'$.
Observe that
\equ{
	\partial_{\mu'}w_{\zeta',\mu'}=\frac1{2\sqrt {\mu'}} \frac{|y-\zeta'|^2-\mu'^2}{(|y-\zeta'|^2+\mu'^2)^{\frac32}}.
	}
We derive $E$ with respect to $\mu'$ and deduce
\begin{align*}
	\|\partial_{\mu'}E\|_{**}&=O\left(\|\sqrt{\varepsilon}\pi(\varepsilon y)w_{\zeta',\mu'}^3\partial_{\mu'}w_{\zeta',\mu'}\|_{**} \right)
	+O\left(\|\varepsilon^{\frac32}w^4_{\zeta',\mu'}\partial_{\mu}\pi(\varepsilon y)\|_{**} \right)\\
	&=O\left(\left\|\varepsilon \frac{\mu'(|y-\zeta'|^2-\mu'^2)}{(\mu'^2+|y-\zeta^2|)^3}\right\|_{**} \right)+O\left(\left\|\varepsilon^\frac32\frac{\mu'^2}{(\mu'^2+|y-\zeta'|^2)^2}\right\|_{**} \right)\\
	&=O(\varepsilon), \quad {\mbox {as}} \quad \varepsilon \to 0.
\end{align*}
Note that
\equ{
	|\partial_{\zeta'_i}w_{\zeta',\mu'}|= \frac{\sqrt{\mu'}|y-\zeta'|}{(\mu'^2+|y-\zeta'|^2)^\frac32},\quad \mbox{for }i=1,2,3.
	}
We derive $E$ with respect to $\zeta'_i$ and deduce for $i=1,2,3$
\begin{align*}
	\|\partial_{\zeta_i'}E\|_{**}&=O\left(\|\sqrt{\varepsilon}\pi(\varepsilon y)w_{\zeta',\mu'}^3\partial_{\zeta'_i}w_{\zeta',\mu'}\|_{**} \right)
	+O\left(\|\varepsilon^{\frac32}w^4_{\zeta',\mu'}\partial_{\zeta_i}\pi(\varepsilon y)\|_{**} \right)\\
	&=O\left(\left\|\varepsilon \frac{\mu'^2|y-\zeta'|}{(\mu'^2+|y-\zeta^2|)^3}\right\|_{**} \right)+O\left(\left\|\varepsilon^\frac32\frac{\mu'^2}{(\mu'^2+|y-\zeta'|^2)^2}\right\|_{**} \right)\\
	&=O(\varepsilon) , \quad {\mbox {as}} \quad \varepsilon \to 0.
\end{align*}
Moreover, a similar computation shows that
\equ{\|\nabla_{\zeta',\mu'}\partial_{\mu'}E\|_{**}\leq O(\varepsilon) , \quad {\mbox {as}} \quad \varepsilon \to 0.}
Collecting all the previous computations we conclude there exists a positive constant $C>0$ such that
\equ{
	\|E\|_{**}+\|\nabla_{\zeta',\mu'}E\|_{**}+\|\nabla_{\zeta',\mu'}\partial_{\mu'}E\|_{**}\leq C \varepsilon.
	}

\medskip
 Concerning the differentiability of the function $\phi(\zeta')$, let us write
\equ{A(x,\varphi)=\varphi-T(N(\varphi)+E).}
Observe that $A(\zeta',\phi)=0$ and
$\partial_\phi A(\zeta',\phi)=I+O(\varepsilon).$
It follows that for small $\varepsilon$, the linear operator $\partial_\phi A(\zeta',\phi)$ is invertible, with uniformly bounded inverse. It also depends continuously on its parameters. Differentiating respect to $\zeta'$ we obtain
\equ{\partial_{\zeta'}A(\zeta',\phi)=-(\partial_{\zeta'}T)(N(\phi)+E)-T(\partial_{\zeta'}N(\phi)+\partial_{\zeta'}R).}
where the previous expression depend continuously on their parameters. Hence the implicit function theorem yields that $\phi(\zeta')$ is a $C^1$ function. Moreover, we have
\equ{\partial_{\zeta'}\phi=-(\partial_\phi A(\zeta',\phi))^{-1}[\partial_{\zeta'}A(\zeta',\phi)].}
By Taylor expansion we conclude that
\equ{\|\partial_{\zeta'}N(\phi)\|_{**}\leq C(\|\phi\|_*+\|\partial_{\zeta'}\phi\|_*)\|\phi\|_*\leq C(\|E\|_{**}+\|\partial_{\zeta'}\phi\|_*)\|E\|_{**}.}
Using Proposition (\ref{prop2}), we have
\equ{\|\partial_{\zeta'}\phi\|_*\leq C(\|N(\phi)+E\|_{**}+\|\partial_{\zeta'}N(\phi)\|_{**}+\|\partial_{\zeta'}E\|_{**}),}
for some constant $C>0$. Hence, we conclude that
\equ{\|\partial_{\zeta'}\phi\|_{*}\leq C(\|E\|_{**}+\|\partial_{\zeta'}E\|_{**}).}
A similar argument shows that, as well
\equ{\|\partial_{\mu'}\phi\|_*\leq C(\|E\|_{**}+\|\partial_{\mu'}E\|_{**}),}
and moreover
\equ{\|\nabla_{\zeta',\mu'}\partial_{\mu'}\phi\|_*\leq C(\|E\|_{**}+\|\nabla_{\zeta',\mu'}E\|_{**}+\|\nabla_{\zeta',\mu'}\partial_{\mu'}E\|_{**}).}
This can be summarized as follows.
\begin{lema}
Under the assumptions of Proposition \ref{PropLin} and \ref{lema2} consider the map $(\zeta',\mu')\rightarrow \phi$. The partial derivatives $\nabla_{\zeta'} \phi$, $\nabla_{\mu'}\phi$, $\nabla_{\zeta',\mu'}\partial_{\mu'}$ exist and define continuous functions of $(\zeta',\mu')$. Besides, there exist a constant $C_2>0$, such that
\equ{\|\nabla_{\zeta',\mu'}\phi\|_*+\|\nabla_{\zeta',\mu'}\partial_{\mu'}\phi\|_*\leq C_2 \varepsilon}
for all $\varepsilon >0$ small enough.
\end{lema}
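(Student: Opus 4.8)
The plan is to obtain $\phi=\phi(\zeta',\mu')$ from the implicit function theorem applied to the fixed–point identity of Lemma \ref{lema2}. Set
\[
\mathcal{A}\big((\zeta',\mu'),\varphi\big)=\varphi-T\big(N(\varphi)+E\big),
\]
where $T$ is the linear operator of Proposition \ref{PropLin} and $N$, $E$ are as in \S\ref{sec5}; all three objects depend on $(\zeta',\mu')$, through $w_{\zeta',\mu'}$, $\pi$ and the orthogonality conditions defining $T$. By Lemma \ref{lema2} we have $\mathcal{A}((\zeta',\mu'),\phi)=0$ and $\|\phi\|_*\le C_1\varepsilon$, so it suffices to check that $\mathcal{A}$ is $C^1$ and that $\partial_\varphi\mathcal{A}$ is invertible along the solution.

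For the invertibility: since $N(\varphi)=(V+\varphi)^5-V^5-5V^4\varphi$, a Taylor expansion gives $\|DN(\varphi)[\psi]\|_{**}\le C\|\varphi\|_*\|\psi\|_*$ on $\mathcal{F}_\gamma$, and because $T$ maps $\|\cdot\|_{**}$ boundedly into $\|\cdot\|_*$ by Proposition \ref{PropLin}, we get $\partial_\varphi\mathcal{A}((\zeta',\mu'),\phi)=I-T\circ DN(\phi)=I+O(\varepsilon)$ in operator norm on $\big(C(\overline{\Omega}_\varepsilon),\|\cdot\|_*\big)$; hence it is invertible with uniformly bounded inverse for small $\varepsilon$, and it depends continuously on the parameters. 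For the $C^1$ dependence on $(\zeta',\mu')$, I would use Proposition \ref{prop2}, which already provides that $T$ is $C^1$ in $(\zeta',\mu')$ with $\nabla_{\zeta',\mu'}T$ and $\nabla_{\zeta',\mu'}\partial_{\mu'}T$ existing, continuous, and bounded from $\|\cdot\|_{**}$ to $\|\cdot\|_*$; that $E=\big(w_{\zeta',\mu'}+\sqrt\varepsilon\,\pi(\varepsilon\,\cdot)\big)^5-w_{\zeta',\mu'}^5$ is $C^1$ in $(\zeta',\mu')$ and $C^2$ in $\mu'$, with
\[
\|E\|_{**}+\|\nabla_{\zeta',\mu'}E\|_{**}+\|\nabla_{\zeta',\mu'}\partial_{\mu'}E\|_{**}\le C\varepsilon ,
\]
which is verified by differentiating the explicit formulas for $\partial_{\mu'}w_{\zeta',\mu'}$ and $\partial_{\zeta'_i}w_{\zeta',\mu'}$ and invoking the expansion \eqref{expansionpi} for $\pi$ together with its $\zeta$- and $\mu$-derivatives; and that $N$ depends on $(\zeta',\mu')$ only through $V=w_{\zeta',\mu'}+\sqrt\varepsilon\,\pi(\varepsilon\,\cdot)$, which is smooth in the parameters, so $N$ inherits $C^1$ dependence with $\|\partial_{\zeta'}N(\varphi)\|_{**}\le C(\|\varphi\|_*+\|\partial_{\zeta'}\varphi\|_*)\|\varphi\|_*$.

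The implicit function theorem then yields that $(\zeta',\mu')\mapsto\phi$ is $C^1$, with
\[
\partial_{\zeta'}\phi=-\big(\partial_\varphi\mathcal{A}\big)^{-1}\big[\partial_{\zeta'}\mathcal{A}\big],
\qquad
\partial_{\zeta'}\mathcal{A}=-(\partial_{\zeta'}T)\big(N(\phi)+E\big)-T\big(\partial_{\zeta'}N(\phi)+\partial_{\zeta'}E\big),
\]
and analogously for $\partial_{\mu'}\phi$. To close the bound, note $\|(\partial_{\zeta'}T)(N(\phi)+E)\|_*\le C\|N(\phi)+E\|_{**}\le C\varepsilon$ by Proposition \ref{prop2} together with $\|\phi\|_*=O(\varepsilon)$, that $\|T(\partial_{\zeta'}E)\|_*\le C\varepsilon$, and that $\|T(\partial_{\zeta'}N(\phi))\|_*\le C\varepsilon(\varepsilon+\|\partial_{\zeta'}\phi\|_*)$. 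The only genuinely non‑routine point is that $\partial_{\zeta'}N(\phi)$ itself involves $\partial_{\zeta'}\phi$, so one arrives at $\|\partial_{\zeta'}\phi\|_*\le C\varepsilon+C\varepsilon\|\partial_{\zeta'}\phi\|_*$, and for $\varepsilon$ small the last term is absorbed into the left side, giving $\|\partial_{\zeta'}\phi\|_*\le C\varepsilon$; the same argument gives $\|\partial_{\mu'}\phi\|_*\le C\varepsilon$. For the mixed second derivative $\nabla_{\zeta',\mu'}\partial_{\mu'}\phi$ I would differentiate the identity for $\partial_{\mu'}\phi$ once more, now using the existence and continuity of $\nabla_{\zeta',\mu'}\partial_{\mu'}T$ from Proposition \ref{prop2}, the estimate $\|\nabla_{\zeta',\mu'}\partial_{\mu'}E\|_{**}\le C\varepsilon$, and the second‑order Taylor estimates for $N$, and run the same absorption step to obtain $\|\nabla_{\zeta',\mu'}\partial_{\mu'}\phi\|_*\le C_2\varepsilon$. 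Continuity in $(\zeta',\mu')$ of all these derivatives is inherited from the continuous parameter dependence of $T$, $E$, $N$ and of $(\partial_\varphi\mathcal{A})^{-1}$. The main obstacle, then, is purely the bookkeeping of this self‑referential estimate and the careful use of Proposition \ref{prop2}; everything else is a direct application of the contraction/IFT machinery already set up.
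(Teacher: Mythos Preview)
Your argument is correct and matches the paper's own approach essentially line for line: the paper also writes $A((\zeta',\mu'),\varphi)=\varphi-T(N(\varphi)+E)$, observes $\partial_\varphi A=I+O(\varepsilon)$, applies the implicit function theorem, and then closes the bounds using Proposition~\ref{prop2} together with the estimates $\|E\|_{**}+\|\nabla_{\zeta',\mu'}E\|_{**}+\|\nabla_{\zeta',\mu'}\partial_{\mu'}E\|_{**}\le C\varepsilon$ and the same self-referential absorption step you describe. If anything, your write-up is slightly more explicit than the paper's about why the $\partial_{\zeta'}N(\phi)$ term can be absorbed.
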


After Problem (\ref{ProbPhi}) has been solved, we will find solutions to the full problem (\ref{ProbPhi2}) if we manage to adjust the pair $(\zeta',\mu' )$ in such a way that $c_{i}(\zeta',\mu' )=0$, $i=1,2,3,4$. This is the {\em reduced problem}. A nice feature of this system of equations is that it turns out to be equivalent to finding critical points of a functional of the pair $(\zeta',\mu')$ which is close, in appropriate sense, to the energy of the single bubble $U$.

\section{Final argument.}\label{Sec:Variational}

In order to obtain a solution of \eqref{1} we need to solve the system of equations
\eq{ c_{j}
(\zeta',\mu' )= 0 \quad\hbox{for all } j =1,\ldots, 4\,.
\label{sist}}
If (\ref{sist}) holds, then $v= V+ \phi$ will be a solution to (\ref{ProbPhi}). This system turns out to be equivalent to a variational problem. We define
\equ{F(\zeta',\mu')=E_\varepsilon(V+\phi),}
where $\phi=\phi(\zeta',\mu')$ is the unique solution of (\ref{ProbPhi2}) that we found in the previous section, and $E_\varepsilon$ is the scaled energy functional
\equ{E_\varepsilon(U)=\frac{1}{2}\int_\Omega |\nabla U|^2+\frac{\varepsilon^2\lambda}{2}\int_\Omega |U|^2-\frac{1}{6}\int_\Omega |U|^6.}
Observe that $E_\lambda(U_{\zeta',\mu'})=E_\varepsilon(V)$.
\medskip

Critical points of $F$ correspond to solutions of \eqref{sist}, under the assumption that the error $E$ is small enough.
\begin{lema}
Under the assumptions of Propositions \ref{PropLin} and \ref{lema2}, the functional $F(\zeta',\mu')$ is of class $C^1$ and for all $\varepsilon$ sufficiently small, if $\nabla F=0$ then $(\zeta',\mu')$ satisfies system \eqref{sist}.
\end{lema}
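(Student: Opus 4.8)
The plan is to exploit the variational structure hidden in the construction. The key observation I would record first is that, by (\ref{ProbAproxTrans}) together with the definitions of $L$, $N$, $E$ and with equation (\ref{ProbPhi2}), the function $v=V+\phi$ satisfies
\begin{equation*}
-\Delta v+\varepsilon^2\lambda v-v^5=\sum_{i=1}^{4}c_i\,w_{\zeta',\mu'}^4\,z_i \quad\text{in }\Omega_\varepsilon,\qquad \partial_\nu v=0\ \text{ on }\partial\Omega_\varepsilon .
\end{equation*}
Hence, integrating by parts and using that $V$ and $\phi$ both satisfy the homogeneous Neumann condition, the differential of the scaled energy is
\begin{equation*}
DE_\varepsilon(v)[\psi]=\sum_{i=1}^{4}c_i\int_{\Omega_\varepsilon}w_{\zeta',\mu'}^4 z_i\,\psi\qquad\text{for every }\psi\in H^1(\Omega_\varepsilon).
\end{equation*}
The $C^1$ regularity of $F$ I would get for free: it follows from the $C^1$ dependence of $\phi$ on $(\zeta',\mu')$ proved in the last lemma of Section \ref{sec5}, from the smooth dependence of $V$ on $(\zeta',\mu')$, and from the smoothness of $E_\varepsilon$ on $H^1(\Omega_\varepsilon)$.

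Next I would differentiate $F(\zeta',\mu')=E_\varepsilon(V+\phi)$ by the chain rule, applying the identity above with $\psi=\partial_{\zeta'_k}(V+\phi)$ and $\psi=\partial_{\mu'}(V+\phi)$ to obtain
\begin{equation*}
\partial_{\zeta'_k}F=\sum_{i=1}^{4}c_i\int_{\Omega_\varepsilon}w_{\zeta',\mu'}^4 z_i\,\partial_{\zeta'_k}(V+\phi),\qquad \partial_{\mu'}F=\sum_{i=1}^{4}c_i\int_{\Omega_\varepsilon}w_{\zeta',\mu'}^4 z_i\,\partial_{\mu'}(V+\phi).
\end{equation*}
The heart of the argument is then to show that the $4\times4$ matrix of coefficients of $(c_1,\dots,c_4)$ in this linear system is invertible for small $\varepsilon$. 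Writing $V=w_{\zeta',\mu'}+\tilde\pi$ with $\tilde\pi(y)=\sqrt\varepsilon\,\pi_{\zeta,\mu}(\varepsilon y)$, Lemma \ref{lemaPi} gives that $\tilde\pi$ and its $(\zeta',\mu')$-derivatives are $O(\varepsilon)$ in the weighted norms, so $\partial_{\zeta'_k}V=z_k+O(\varepsilon)$ and $\partial_{\mu'}V=z_4+O(\varepsilon)$, while $\|\nabla_{\zeta',\mu'}\phi\|_*=O(\varepsilon)$ by the last lemma of Section \ref{sec5}. Since $w_{\zeta',\mu'}^4 z_i$ is integrable with enough decay at infinity and $\int_{\R^3}w_{0,\mu'}^4 z_i z_j=\kappa_i\,\delta_{ij}$ with $\kappa_i>0$, one concludes that
\begin{equation*}
\int_{\Omega_\varepsilon}w_{\zeta',\mu'}^4 z_i\,\partial_{\zeta'_k}(V+\phi)=\kappa_i\,\delta_{ik}+o(1),\qquad \int_{\Omega_\varepsilon}w_{\zeta',\mu'}^4 z_i\,\partial_{\mu'}(V+\phi)=\kappa_i\,\delta_{i4}+o(1),
\end{equation*}
as $\varepsilon\to 0$, uniformly in the admissible range of $(\zeta',\mu')$. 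Thus the coefficient matrix equals $\mathrm{diag}(\kappa_1,\kappa_2,\kappa_3,\kappa_4)+o(1)$, which is invertible for all small $\varepsilon$, and therefore $\nabla F(\zeta',\mu')=0$ forces $c_i(\zeta',\mu')=0$ for $i=1,\dots,4$, i.e. system (\ref{sist}) holds.

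The algebraically clean step is the identity for $-\Delta v+\varepsilon^2\lambda v-v^5$. The parts I expect to demand the most care are: the rigorous justification of the chain rule given that $\phi$ is only known to be $C^1$ (not smoother) in $(\zeta',\mu')$; the verification that the off-diagonal contributions coming from $\tilde\pi$ and from $\nabla_{\zeta',\mu'}\phi$ are genuinely $o(1)$ and not merely $O(1)$; and the passage from integrals over $\Omega_\varepsilon$ to their $\R^3$ counterparts with the stated error. All of these are controlled by the tools already available, namely the expansion of Lemma \ref{lemaPi} and the estimate $\|\nabla_{\zeta',\mu'}\phi\|_*=O(\varepsilon)$ obtained in the previous section.
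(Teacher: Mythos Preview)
Your proposal is correct and follows essentially the same approach as the paper: compute $DE_\varepsilon(V+\phi)[\psi]=\sum_i c_i\int w_{\zeta',\mu'}^4 z_i\psi$, apply the chain rule with $\psi=\partial_{\zeta'_k}(V+\phi)$ and $\psi=\partial_{\mu'}(V+\phi)$, and use $\partial_{\zeta'_k}(V+\phi)=z_k+o(1)$, $\partial_{\mu'}(V+\phi)=z_4+o(1)$ to obtain a diagonally dominant system. Your write-up is in fact somewhat more explicit than the paper's in justifying the identity for $DE_\varepsilon(v)$ and in isolating the matrix $\mathrm{diag}(\kappa_1,\dots,\kappa_4)+o(1)$, but the argument is the same.
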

\begin{proof}
Let us differentiate with respect to $\mu'$.
\equ{\partial_{\mu'}F(\zeta',\mu')=DE_\varepsilon(V+\phi)[\partial_{\mu'}V+\partial_{\mu'}\phi]=\sum_{j=1}^4 \int_{\Omega_\varepsilon}c_jw_{\zeta',\mu'}^4z_j[\partial_{\mu'}V+\partial_{\mu'}\phi].}
From the results of the previous section, we deduce $\partial_{\mu'}F$ is continuous. If $\partial_{\mu'}F(\zeta',\mu')=0$, then
$$
\sum_{j=1}^4 \int_{\Omega_\varepsilon}c_jw_{\zeta',\mu'}^4z_j[\partial_{\mu'}V+\partial_{\mu'}\phi]=0.
$$
Since $\|\partial_{\mu'}\|_{*}\leq C(\|E\|_{**}+\|\partial_{\mu'}E\|_{**})$, we have, as $\varepsilon\to 0$, $\partial_{\mu'}V+\partial_{\mu'}\phi=z_4+o(1)$, with $o(1)$ small in terms of the $**-$norm as $\varepsilon\to 0$. Similarly, we check that $\partial_{\zeta_k'}F$ is continuous,
\equ{\partial_{\zeta_k'}F(\zeta',\mu')=DE_\varepsilon(V+\phi)[\partial_{\zeta_k'}V+\partial_{\zeta_k'}\phi]=\sum_{j=1}^4 \int_{\Omega_\varepsilon}c_jw_{\zeta',\mu'}^4z_j[\partial_{\zeta_k'}V+\partial_{\zeta_k'}\phi]=0,}
and $\partial_{\zeta_k'}V+\partial_{\zeta_k'}\phi=z_k+o(1)$, for $k=1,2,3$.
\medskip

We conclude that if $\nabla F=0$ then
\equ{
\sum_{j=1}^4 \int_{\Omega_\varepsilon} w^4_{\zeta',\mu'}z_j[z_i+o(1)]=0,\quad i=1,2,3,4,
}
with $o(1)$ small in the sense of $**-$norm as $\varepsilon\to 0$. The above system is diagonal dominant and we thus get $c_j=0$ for all $j=1,2,3,4$.
\end{proof}
In the following Lemma we find an expansion for the functional $F$.
\begin{lema} Under the assumptions of Propositions \ref{PropLin} and \ref{lema2}, the following expansion holds
\equ{F(\zeta',\mu')=E_\varepsilon(V)+\left[\|E\|_{**}+\|\nabla_{\zeta',\mu'}E\|_{**}+\|\nabla_{\zeta',\mu'}\partial_{\mu'}E\|_{**} \right]\theta(\zeta',\mu'),}
where $\theta$ satisfies
\equ{|\theta|+|\nabla_{\zeta',\mu'}\theta|+|\nabla_{\zeta',\mu'}\partial_{\mu'}\theta|\leq C,}
for a positive constant $C$.
\end{lema}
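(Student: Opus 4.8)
The plan is to compare $F(\zeta',\mu')=E_\varepsilon(V+\phi)$ with $E_\varepsilon(V)$ by integrating $DE_\varepsilon$ along the segment joining $V$ to $V+\phi$, and to exploit that $\phi$ solves the projected problem \eqref{ProbPhi2} together with the orthogonality conditions it contains, so that the Lagrange multipliers $c_i$ never contribute. First I would write
\begin{equation*}
F(\zeta',\mu')-E_\varepsilon(V)=\int_0^1 DE_\varepsilon(V+t\phi)[\phi]\,dt .
\end{equation*}
Since $V$ solves $-\Delta V+\varepsilon^2\lambda V=w_{\zeta',\mu'}^5$ in $\Omega_\varepsilon$ with $\partial_\nu V=0$, and $\phi$ solves \eqref{ProbPhi2}, an integration by parts (with no boundary term, as $\partial_\nu(V+t\phi)=0$) together with the algebraic identity $(V+t\phi)^5=V^5+5tV^4\phi+\widetilde{N}_t(\phi)$, where $\widetilde{N}_t(\phi):=(V+t\phi)^5-V^5-5tV^4\phi$ and $E=V^5-w_{\zeta',\mu'}^5$, yields
\begin{equation*}
DE_\varepsilon(V+t\phi)[\phi]=(t-1)\int_{\Omega_\varepsilon} E\,\phi+\int_{\Omega_\varepsilon}\big(tN(\phi)-\widetilde{N}_t(\phi)\big)\phi+t\sum_{i=1}^4 c_i\int_{\Omega_\varepsilon} w_{\zeta',\mu'}^4 z_i\,\phi ,
\end{equation*}
and the last sum vanishes because $\int_{\Omega_\varepsilon}w_{\zeta',\mu'}^4z_i\phi=0$. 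Integrating in $t\in(0,1)$,
\begin{equation*}
F(\zeta',\mu')-E_\varepsilon(V)=-\tfrac12\int_{\Omega_\varepsilon}E\,\phi+\int_0^1\!\!\int_{\Omega_\varepsilon}\big(tN(\phi)-\widetilde{N}_t(\phi)\big)\phi\,dt .
\end{equation*}

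Next I would estimate the right-hand side in the $\|\cdot\|_*$--$\|\cdot\|_{**}$ duality. The weighted pairing $\int_{\Omega_\varepsilon}(1+|x-\zeta'|^{2+\sigma})^{-1}(1+|x-\zeta'|^{\sigma})^{-1}\,dx$ is controlled for the chosen $\sigma$, so $|\int_{\Omega_\varepsilon}E\phi|\le C\|E\|_{**}\|\phi\|_*$; moreover $\|tN(\phi)-\widetilde{N}_t(\phi)\|_{**}\le C\|\phi\|_*^2$, exactly as in the bound $\|N(\phi)\|_{**}\le C_0\|\phi\|_*^2$ used in the proof of Lemma \ref{lema2}, so the second term is $O(\|\phi\|_*^3)$. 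Since $\|\phi\|_*\le C\|E\|_{**}\le C\varepsilon$ by Lemma \ref{lema2}, this already gives $|F(\zeta',\mu')-E_\varepsilon(V)|\le C\|E\|_{**}\|\phi\|_*$.

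For the first derivatives in $(\zeta',\mu')$ and for the mixed derivative $\nabla_{\zeta',\mu'}\partial_{\mu'}$, I would differentiate the last displayed identity; this is legitimate because $(\zeta',\mu')\mapsto\phi$ is $C^1$ with $\nabla_{\zeta',\mu'}\partial_{\mu'}\phi$ continuous, by the last Lemma of Section \ref{sec5}. Each differentiation produces integrals in which a derivative of $E$ or $E$ itself is paired with $\phi$ or a derivative of $\phi$, together with cubic terms in $\phi$ carrying the $(\zeta',\mu')$-derivatives of $V$ and of $\widetilde{N}_t$; all of these are estimated by the same duality, using $\|\nabla_{\zeta',\mu'}\phi\|_*+\|\nabla_{\zeta',\mu'}\partial_{\mu'}\phi\|_*\le C_2\varepsilon$ and the bounds $\|E\|_{**}+\|\nabla_{\zeta',\mu'}E\|_{**}+\|\nabla_{\zeta',\mu'}\partial_{\mu'}E\|_{**}\le C\varepsilon$ established just before the statement. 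Collecting everything, $F-E_\varepsilon(V)$ and all the relevant derivatives are $O\big(\|E\|_{**}+\|\nabla_{\zeta',\mu'}E\|_{**}+\|\nabla_{\zeta',\mu'}\partial_{\mu'}E\|_{**}\big)$, which is precisely the asserted expansion, with $\theta$ (namely $F-E_\varepsilon(V)$ divided by that bracket) satisfying $|\theta|+|\nabla_{\zeta',\mu'}\theta|+|\nabla_{\zeta',\mu'}\partial_{\mu'}\theta|\le C$. Combined with the energy expansion of Section \ref{sec2} and the relabeling \eqref{mu3}, this is what feeds Lemma \ref{psi}.

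The main obstacle is this last differentiation step: one must differentiate the implicitly defined corrector $\phi=\phi(\zeta',\mu')$ inside the weighted integrals while keeping the $\|\cdot\|_*$/$\|\cdot\|_{**}$ pairings consistent, in particular controlling $\partial_{\zeta'}V$, $\partial_{\mu'}V$ and the $(\zeta',\mu')$-derivatives of $\widetilde{N}_t(\phi)$. All the a priori control needed for this has been arranged in Propositions \ref{PropLin} and \ref{prop2} and in Lemma \ref{lema2}, so it reduces to the bookkeeping already carried out for Lemma 2.1 in \cite{DMP}.
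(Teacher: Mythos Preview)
Your proof is correct and follows essentially the same strategy as the paper: expand $E_\varepsilon(V+\phi)-E_\varepsilon(V)$ by Taylor's formula, use the orthogonality conditions on $\phi$ to kill the contribution of the Lagrange multipliers $c_i$, and estimate the remaining integrals with the $\|\cdot\|_*$/$\|\cdot\|_{**}$ pairing; the derivative estimates are then obtained by differentiating under the integral sign using the bounds from Proposition \ref{prop2} and Lemma \ref{lema2}. The only organizational difference is that the paper first observes $DE_\varepsilon(V+\phi)[\phi]=0$ (precisely because of the orthogonality) and writes the second-order Taylor remainder
\[
F(V+\phi)-F(V)=\int_0^1(1-t)\,D^2E_\varepsilon(V+t\phi)[\phi,\phi]\,dt=\int_0^1(1-t)\!\int_{\Omega_\varepsilon}\!\Big[(N(\phi)+E)\phi+5(V^4-(V+t\phi)^4)\phi^2\Big]dt,
\]
so the difference is manifestly quadratic in $\phi$; you instead take the first-order integral form and reduce to $-\tfrac12\int E\phi$ plus cubic terms, which gives the same $O(\|E\|_{**}^2)$ bound after using $\|\phi\|_*\le C\|E\|_{**}$. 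Both routes are equivalent.
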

\begin{proof}
Using the fact that $DF(V+\phi)[\phi]=0$, a Taylor expansion gives
\begin{align*}
F(V+\phi)-F(V)&=\int_0^1 D^2F(V+t\phi)[\phi,\phi](1-t)dt\\
			  &=\int_0^1 \left(\int_{\Omega_\varepsilon}\left[N(\phi)+E\right]\phi+\int_{\Omega_\varepsilon}5\left[V^4-(V+t\phi)^4\right]\phi^2 \right)(1-t)dt.
\end{align*}
Since $\|\phi\|_*\leq C\|E\|_{**}$, we get \equ{F(V+\phi)-F(V)=O(\|E\|_{**}^2).}
Observe that
\begin{align*}
\nabla_{\zeta',\mu'} &\left[ F(V+\phi)-F(V)\right] \\
&=\int_0^1 \left(\int_{\Omega_\varepsilon}\nabla_{\zeta',\mu'}\left[\left(N(\phi)+E\right)\phi\right]+\int_{\Omega_\varepsilon}5\nabla_{\zeta',\mu'}\left[\left(V^4-(V+t\phi)^4\right)\phi^2\right] \right)(1-t)dt.
\end{align*}
Since $\|\nabla_{\zeta',\mu'}\phi\|_*\leq C[\|E\|_{**}+\|\nabla_{\zeta',\mu'}E\|_{**}]$, we easily see that \equ{\nabla_{\zeta',\mu'}[F(V+\phi)-F(V)]=O(\|E\|_{**}^2+\|\nabla_{\zeta',\mu'}E\|_{**}^2).}
A similar computation yields the result.
\end{proof}
We have now all the elements to prove our main result.
\begin{proof}[Proof of Theorem \ref{teo2}]
We choose
\equ{\mu=\frac{a_1g_\lambda(\zeta)}{2a_2\lambda}\Lambda,}
where $\zeta\in \mathcal{D}_\lambda$. A similar computation to the one performed in the previous section, based in the estimate \eqref{expansionpi}, allows us to show that
\equ{
\|E\|_{**}+\|\nabla_{\zeta',\mu'}E\|_{**}+\|\nabla_{\zeta',\mu'}\partial_{\mu'}E\|_{**}\leq C\mu^{\frac12}\varepsilon^{\frac12}\delta_\la,
}
where $\delta_\lambda=\sup_{\mathcal{D}_\lambda}(|g_\la|+|\nabla g_\la|)$. Since $\alpha_0< \mu'< \beta_0$, we have
\equ{F(\zeta',\mu')=E_\varepsilon(V)+\mu^2\delta_\lambda^2\theta(\zeta',\mu'),}
with $|\theta|+|\nabla_{\zeta',\mu'}\theta|+|\nabla_{\zeta',\mu'}\partial_{\mu'}\theta|\leq C$. We define $\psi_\la(\Lambda,\zeta)=F(\zeta',\mu')$. We conclude that
\equ{\psi_\la(\Lambda,\zeta)=E_\la(U_{\zeta,\mu})+g_\la(\zeta)^2\theta_\la(\zeta,\Lambda),}
where $\theta_\la$ is as in Lemma \ref{psi}. Thus, $\psi_\la$ has a critical point as in the statement of Lemma \ref{psi}. This concludes the proof of our main result, with the constant $\gamma=\frac{a_1}{2a_2}$.
\end{proof}

\section*{acknowledgement}
The research of the first author has been suported by grants Fondecyt 1150066 and Fondo Basal CMM.  The second author has been supported by
 Millenium Nucleus CAPDE, NC130017 and Fondecyt grant 1120151.
The third author has been supported by a public grant overseen by the French National Research Agency (ANR) as part of the ``Investissements d'Avenir'' program (reference: ANR-10-LABX-0098, LabEx SMP).

\end{document}